\documentclass[11pt]{amsart}

\usepackage[english]{babel}
\usepackage{amsmath, amsfonts, amssymb, amsthm, epsfig, 
graphicx, url, hyperref, color, tikz, cancel}

\newtheorem{theorem}{Theorem}[section]
\newtheorem{lemma}{Lemma}[section]
\newtheorem{corollary}{Corollary}[section]
\newtheorem{proposition}{Proposition}[section]
\newtheorem{definition}{Definition}[section]

\newtheorem{remark}{Remark}[section]

\newcounter{theor}

\newtheorem{thm}[theor]{Theorem}

\newcounter{lemm}

\def\s{\mathbb{S}}
\def\K{\mathcal{K}}

\def\R{\mathbb{R}}

\def\vol{\mathrm{vol}}
\def\S{\mathrm{S}}
\def\W{\mathrm{W}}
\def\F{\mathcal{F}}
\def\G{\mathcal{G}}
\def\Sym{\mathrm{Sym}}
\def\D{\mathrm{D}}
\def\E{\mathrm{E}}

\def\e{\mathrm{e}}
\newcommand{\dlat}{\mathrm{d}}

\def\e{\mathrm{e}}

\numberwithin{equation}{section}

\begin{document}

\title[A Brunn-Minkowski principle for information functionals]{On a Brunn-Minkowski principle for first-order information functionals}

\author[L. Gordo Malag\'on]{Lidia Gordo Malag\'on}
\address{Departamento de Matem\'aticas, Universidad de Murcia, Campus de
Espinar\-do, 30100 Murcia, Spain}
\email{lidia.gordom@um.es} \email{jesus.yepes@um.es}

\author[E. Saor\'in G\'omez]{Eugenia Saor\'in G\'omez}
\address{ALTA Institute for Algebra, Geometry, Topology and their Applications, Universit\"at Bremen, D-28359 Bremen, Germany}
\email{esaoring@uni-bremen.de}

\author[J. Yepes Nicol\'as]{Jes\'us Yepes Nicol\'as}

\thanks{This work is associated with grant PID2025-169523NB-I00, provisionally proposed for funding by MICIU/AEI under the 2025 “Knowledge Generation Projects” call. It is also supported by the grants 21899/PI/22 ``Proyecto financiado por la CARM a través de la convocatoria de Ayudas a proyectos para el desarrollo de investigación científica y técnica por grupos competitivos, incluida en el Programa Regional de Fomento de la Investigación Científica y Técnica (Plan de Actuación 2022) de la Fundación Séneca-Agencia de Ciencia y Tecnología de la Región de Murcia”, as well as by the grant RYC2021-034858-I, funded by MCIN/AEI and by the ``European Union NextGenerationEU/PRTR''}

\subjclass[2020]{52A20, 52A38, 52A40, 52A39}
\keywords{Information functional, Brunn-Minkowski inequality, first varia\-tion, homogeneous functional, $\alpha$-concave functional, mixed volumes, mixed discriminants, $p$-sum, radial sum}

\begin{abstract}
In this work we present the following general principle: eve\-ry positive homogeneous functional on an abstract convex cone satis\-fying a suitable Brunn-Minkowski inequality gives rise to an information functional -defined as the quotient of the functional by its relative surface area- that satisfies a linear Brunn-Minkowski inequality. 
We also esta\-blish the dual counterpart of this principle for functionals satisfying an analogous convexity property, yielding reverse linear Brunn-Minkowski ine\-qualities for the associated information functionals.

Furthermore, we show that homogeneity is necessary within this principle by providing a counterexample to the expected result for the information functional associated to the standard Gaussian measure.

As applications, we recover, via a different, first-order argument, a classical result by Gianno\-poulos, Hartzoulaki and Paouris (2002) for the ratio of two consecutive quermassin\-tegrals, together with a characterization of the equality case. The same argument extends to the setting of mixed discriminants of symmetric positive definite matrices.

Finally, our approach also yields a new $L_p$ extension ($p \geqslant 1$) of the Brunn-Minkowski inequality by Dembo, Cover and Thomas for the clas\-si\-cal information functional, where the relative surface area is expressed in terms of Lutwak's first $L_p$ quermassintegral. 

\end{abstract}

\maketitle

\section{Introduction}
The Brunn-Minkowski theory is one of the fundamental frameworks for the study of homogeneous functionals in Convex Geometry. A central principle throughout the theory is that such functionals are better understood through their behaviour under infinitesimal deformations. In the classical theory, such deformations are generated by Minkowski addition, and the corresponding first variation often carries as much geometric information as the functional itself. This viewpoint suggests interpreting first-order quantities not merely as variations, but as geometric functionals encoding the infinitesimal behaviour of the original homogeneous object.

\smallskip

In this regard, the relative Steiner formula \eqref{e:Steiner quermass}  shows that the first varia\-tion of each quermassintegral $\W_i(\cdot)$ with respect to Minkowski addition ($+$) is, up to a dimensional constant, the ``next'' quermassintegral $\W_{i+1}(\cdot)$. In particular, the first variation of the volume $\vol(\cdot)$ is precisely the surface area $\S(\cdot)$. This first-order interpretation provides one of the basic links between Brunn-Minkowski inequalities and the differential structure underlying Minkowski addition, and it has found numerous further applications in the construction of geometric functionals within the classical Brunn-Minkowski theory. For further examples, we refer to Schneider's monograph \cite{Sch}, and to the works \cite{CF,KM,Ro,Ul} and the references therein.

The same philosophy extends far beyond the classical setting. In the $L_p$ Brunn-Minkowski theory, initiated by Firey \cite{F2} and systematically deve\-loped by Lutwak \cite{L1,L2}, the first variation of the volume with respect to $L_p$ addi\-tion gives rise to the first $L_p$ quermassintegral. This notion, introduced by Lutwak, when normalized by the suitable constant, plays the role of the surface area in the $L_p$ setting, here denoted by $\S_p(\cdot)$ (for some preliminaries on Brunn-Minkowski theory, in both its classical and $L_p$ forms, we refer to Section~\ref{s: Preliminaries} and the references therein).

Taking into account the preceeding examples, it is natural to consider not only homogeneous functionals and their first-order variations separately, but also the quotient between them. Such ratios have attracted increasing attention in Convex Geometry, motivated by their remarkable properties and by their analogy with Fisher information. From this viewpoint, the first variation is no longer merely an infinitesimal form of the original functional, but becomes part of a canonical first-order object attached to it.

An outstanding example is the classical information functional
\begin{equation}\label{e: information functional}
I(K)=\frac{\vol(K)}{\S(K)},
\end{equation}
introduced by Dembo, Cover and Thomas \cite{DCT} in connection with a geometric analogue of Fisher information functional. They conjectured that
\begin{equation}\label{e: DCT ineq information functional}
\frac{\vol(K+E)}{\S(K+E)}
\geqslant
\frac{\vol(K)}{\S(K)}
+
\frac{\vol(E)}{\S(E)}
\end{equation}
for every pair of convex bodies $K,E\subset\R^n$ with nonempty interior. Although this conjecture is false in general \cite{FGM}, it has motivated a substantial amount of research \cite{AFO,FGM,FMMZ,GHP,LL,LRS}. It is known to hold in several situations, including the planar case \cite{FGM} and, more generally, whenever one of the summands is a Euclidean ball \cite{DCT,GHP}. More recently, in \cite{FMMZ} the authors established the $L_2$ extension of the above conjecture \eqref{e: DCT ineq information functional}, when dealing with ellipsoids, together with several related results, such as the study of the monotonicity of the information functional.

Moreover, the above inequality when one of the summands is a Euclidean ball follows from a remarkable result of Giannopoulos, Hartzoulaki and Paouris \cite{GHP}, who established a higher-order analogue of \eqref{e: DCT ineq information functional} in terms of quotients of consecutive quermassintegrals (see Theorem \ref{t: GHP}). Related inequa\-lities have also appeared in matrix theory, where analogous phenomena for mixed discriminants reveal further connections between Convex Geometry and determinant inequalities; see, for instance, \cite{AFO}, where the authors study some problems for mixed discriminants of symmetric positive semi-definite matrices, and relate inequalities for mixed volumes to the monotonicity of the information functional.

The present work fits within the broader line of research in Convex Geo\-metry, whose objective is to identify the structural mechanisms governing geometric inequalities. From this perspective, the emphasis shifts from individual examples to the abstract principles that generate them. This philosophy has led to several influential developments, most notably the celebrated characterizations of duality mappings due to Artstein-Avidan and Milman \cite{AM} and Böröczky and Schneider \cite{BS}. More recently, similar ideas have also motivated the development of abstract frameworks for Brunn-Minkowski type inequalities; see, for instance, \cite{GY} and the references therein.

The examples discussed above suggest that information functionals are manifestations of a general principle rather than isolated phenomena. The essential ingredients are not the specific geometric nature of the functionals involved, but only homogeneity, a Brunn-Minkowski type inequality, and the existence of a compatible first-order variation. Our main result shows that these three ingredients alone suffice to endow the associated information functional with a linear Brunn-Minkowski inequality. It may be summarized as follows (see Theorem~\ref{t: concavity general information functional} -jointly with Remark~\ref{r: mm'}- for a more precise statement).

\begin{theorem}\label{t: concavity general information functional introd}
Let $\mathcal F$ be a positive $m$-homogeneous functional on an abs\-tract convex cone satisfying a $(1/m')$-powered Brunn-Minkowski inequality, with $m, m'>0$. 
Let $\mathcal I_E^{\F}$ be the associated information functional given by 
\begin{equation}\label{e: general information functional}
\mathcal I_E^{\F}(K)
=\frac{\F(K)}{\G_E(K)},
\end{equation}
where $\G_E$ is the first variation of $\F$ relative to $E$. Then
\begin{equation}\label{e: linear BM abstract information functional}
\mathcal I_E^{\F}\bigl((1-\lambda) K+\lambda E\bigr)\geqslant(1-\lambda)\mathcal I_E^{\F}(K)+\lambda\,\mathcal I_E^{\F}(E)
\end{equation}
for all $\lambda\in(0,1)$.
Moreover, equality for some $\lambda\in(0,1)$ is characterized by the equality case in the corresponding Brunn-Minkowski inequality for $\mathcal F$.
\end{theorem}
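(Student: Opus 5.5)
Throughout, write $K_\lambda=(1-\lambda)K+\lambda E$ and define the relative first variation by
\[
\G_E(K)=\lim_{t\to0^+}\frac{\F(K+tE)-\F(K)}{t},
\]
assuming, as the precise statement in Theorem~\ref{t: concavity general information functional} will, that this limit exists. The plan is a first-order argument that exploits the concavity of $\F^{1/m'}$ along the segment from $K$ to $E$.

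The first step is to extract a Minkowski-type first inequality from the Brunn-Minkowski inequality. Set $f(\lambda)=\F(K_\lambda)^{1/m'}$, which is concave on $[0,1]$. Its right derivative at $0$ is controlled by the variation of $\F$ in the direction $E-K$. Using $m$-homogeneity, I will rewrite $K+t(E-K)$ as $(1-t)\bigl(K+\tfrac{t}{1-t}E\bigr)$, so that
\[
\F\bigl((1-t)K+tE\bigr)=(1-t)^m\,\F\Bigl(K+\tfrac{t}{1-t}E\Bigr).
\]
This expresses the derivative through $\G_E(K)$ without needing subtraction in the cone. It gives
\[
\frac{d}{dt}\Big|_{0^+}\F(K_t)=\G_E(K)-m\F(K).
\]
Concavity of $f$ then yields $f(1)-f(0)\le f'(0^+)$, that is,
\[
\F(E)^{1/m'}-\F(K)^{1/m'}\le \frac{1}{m'}\,\F(K)^{1/m'-1}\bigl(\G_E(K)-m\F(K)\bigr).
\]
Homogeneity also gives $\G_E(E)=m\F(E)$, so $\mathcal I_E^{\F}(E)=1/m$.

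The second step applies the same computation at the intermediate body $K_\lambda$. I will compute $\G_E(K_\lambda)$ as a derivative along the segment, since $K_\lambda+tE$ is a rescaling of a point on the same segment. Concretely,
\[
K_\lambda+tE=(1+t)K_{\mu(t)},\qquad \mu(t)=\frac{\lambda+t}{1+t}.
\]
Homogeneity then gives
\[
\G_E(K_\lambda)=m\F(K_\lambda)+(1-\lambda)\,\frac{d}{d\lambda}\F(K_\lambda).
\]
Hence
\[
\frac{1}{\mathcal I_E^{\F}(K_\lambda)}=m+(1-\lambda)\,\frac{d}{d\lambda}\log\F(K_\lambda)=m+m'(1-\lambda)\,\frac{f'(\lambda)}{f(\lambda)}.
\]
The target inequality becomes a one-variable statement about the concave positive function $f$. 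Using $\mathcal I(E)=1/m$ and $\mathcal I(K)=1/(m+m'f'(0)/f(0))$, one needs
\[
\frac{1}{m+m'(1-\lambda)f'(\lambda)/f(\lambda)}\ \ge\ \frac{1-\lambda}{m+m'f'(0)/f(0)}+\frac{\lambda}{m}.
\]

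The third step, which I expect to be the main obstacle, is to prove this scalar inequality. The key facts should be that $f'$ is nonincreasing, that $f(\lambda)\ge (1-\lambda)f(0)+\lambda f(1)$, and that the tangent-line bound $f(\lambda)\ge f(1)-(1-\lambda)f'(\lambda)$ holds. I would first try setting $g(\lambda)=(1-\lambda)f'(\lambda)/f(\lambda)$ and showing that $m/(m+m'g)$ is controlled linearly. Clearing denominators, the inequality reduces to a bound on $g(\lambda)$ in terms of $g(0)$. I expect concavity to force $g(\lambda)\le g(0)\,\phi(\lambda)$ for an explicit $\phi$ coming from comparing $f$ with its chord. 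If one-sided derivatives cause trouble, I will work with right derivatives throughout, which concavity guarantees exist.

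Equality can then be traced back through each inequality used. It forces $f$ to be affine on $[0,\lambda]$ or on $[0,1]$, which is exactly the equality case of the $(1/m')$-Brunn-Minkowski inequality for $\F$, as the statement claims.
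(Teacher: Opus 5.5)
\textbf{There is a genuine gap in step three.} Your first two steps match the paper's reduction: the identity $\G_E(K_\lambda)=m\F(K_\lambda)+(1-\lambda)\tfrac{\dlat^+}{\dlat\lambda}\F(K_\lambda)$ is exactly its relation $g(t)=(1-t)f'_+(t)+mf(t)$. The problem is that you keep the exponent $1/m'$ and intend to use only concavity of $f=\F(K_\lambda)^{1/m'}$ along $[K,E]$. Set $c=m'/m$ and $F=f^{c}$, so that $F'/F=c\,f'/f$. After clearing the positive denominators, your scalar inequality is equivalent to
\[
\varphi(\lambda)=F(\lambda)F'_+(0)-F(0)F'_+(\lambda)-\lambda F'_+(0)F'_+(\lambda)\geqslant 0 .
\]
For $m'>m$ this is false for general concave $f$: if $f(t)=1+at$ with $a\neq0$, then $\varphi(t)=c(1-c)a^2t(1+at)^{c-1}<0$.

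Here is a concrete instance. On $\R^2_{\geqslant0}$ take $\F(r,s)=(r+2s)^2/(r+s)$, $K=(1,0)$, $E=(0,1)$, $m=1$, $m'=2$. Then:
\begin{itemize}
\item $\F$ is positive and $1$-homogeneous;
\item $\F(K_t)^{1/2}=1+t$ is concave on the segment;
\item $\G_E(r,s)=(r+2s)(3r+2s)/(r+s)^2>0$.
\end{itemize}
Yet $\mathcal I_E^{\F}(K)=1/3$, $\mathcal I_E^{\F}(E)=1$ and $\mathcal I_E^{\F}(K_{1/2})=3/5<2/3$. So no argument built only from the facts you list (monotone $f'$, chord and tangent-line bounds for $f$) can close step three when $m'>m$.

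\textbf{The missing idea} is to upgrade $(1/m')$-concavity to $(1/m)$-concavity first (Remark~\ref{r: mm'}), and then work with $F=\F(K_t)^{1/m}$.
\begin{itemize}
\item For $m'\leqslant m$ this is harmless: $f^{m'/m}$ is then concave, being a concave increasing power of a concave function.
\item For $m'>m$ it genuinely requires the Brunn-Minkowski inequality for arbitrary pairs $L,M\in\K_{K;E}$, not just along $[K,E]$. The example above fails it on the cone: $\F(1,0)=\F(0,\tfrac14)=1$ while $\F(\tfrac12,\tfrac18)=\tfrac{9}{10}$.
\end{itemize}
The paper normalizes $\bar L=\F(L)^{-1/m}\cdot L$ and $\bar M=\F(M)^{-1/m}\cdot M$. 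It applies the $(1/m')$-inequality to them with $\bar\lambda=\lambda\F(M)^{1/m}/\bigl((1-\lambda)\F(L)^{1/m}+\lambda\F(M)^{1/m}\bigr)$, and then uses $m$-homogeneity.

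With $F$ concave, $\varphi\geqslant0$ follows by splitting on the sign of $F'_+(0)$:
\begin{itemize}
\item if $F'_+(0)\geqslant0$, combine $F(t)-tF'_+(t)\geqslant F(0)$ with $F'_+(t)\leqslant F'_+(0)$;
\item if $F'_+(0)<0$, use $F(t)\leqslant F(0)+tF'_+(0)$.
\end{itemize}
This also corrects your equality claim: equality at $\lambda$ holds if and only if $\F(K_t)^{1/m}$, not $\F(K_t)^{1/m'}$, is affine on $[0,\lambda]$.
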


It should be mentioned that the homogeneity of $\F$ within this result implies that the linear Brunn-Minkowski inequality \eqref{e: linear BM abstract information functional} which holds for $\mathcal{I}_E^{\F}(\cdot)$ is in fact equivalent to
\[
\mathcal I_E^{\F}(K+E)\geqslant \mathcal I_E^{\F}(K)+\mathcal I_E^{\F}(E)
\]
(see Corollary \ref{c: equiv superadditivity and linear BM}).

\smallskip

The general principle developed here unifies several apparently unrelated information inequalities as manifestations of the same abstract mechanism. As a first application, we recover Theorem \ref{t: GHP}, due to Giannopoulos, Hartzoulaki and Paouris \cite{GHP}, by means of a purely first-order argument. Our appro\-ach also provides a characterization of the equality case, which emerges from the abstract framework (see Theorem \ref{t: cociente quermass}). 

A completely new application arises now in the $L_p$ Brunn-Minkowski theory, where the relative surface area is represented by Lutwak's first $L_p$ quermassintegral. More precisely, here we extend the Brunn-Minkowski ine\-quality \eqref{e: DCT ineq information functional} by Dembo, Cover and Thomas for the classical information functional to the $L_p$ setting (for $p\geqslant1$), when one of the sets is a centred ball. This result then provides a new bridge between the $L_p$ Brunn-Minkowski theory and information theoretic functionals.
\begin{theorem}\label{t: cociente Lp surface area}
Let $K\subset\R^n$ be a convex body with nonempty interior containing the origin and let $p>1$. Then, for all $\lambda\in(0,1)$ and any $r>0$,
\begin{equation}\label{eq: cociente Lp surface area}
\frac{\vol\bigl((1-\lambda)\cdot K+_p\lambda\cdot rB_n\bigr)}{\S_p\bigl((1-\lambda)\cdot K+_p\lambda\cdot \,rB_n\bigr)}
\geqslant(1-\lambda)\,\frac{\vol(K)}{\S_p(K)}+\lambda\,\frac{\vol(rB_n)}{\S_p(rB_n)}.
\end{equation}
Equality for some $\lambda\in(0,1)$ holds if and only if $K$ is a centred Euclidean ball.
\end{theorem}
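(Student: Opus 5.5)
The plan is to obtain Theorem~\ref{t: cociente Lp surface area} as a direct instance of the abstract principle, Theorem~\ref{t: concavity general information functional} together with Remark~\ref{r: mm'}. The abstract cone will be the family of convex bodies containing the origin, with \emph{$L_p$ addition} $+_p$ and \emph{$L_p$ scalar multiplication} $\lambda\cdot K$. The latter is defined through support functions by $h_{\lambda\cdot K}=\lambda^{1/p}h_K$.

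\textbf{Step 1 (the cone structure).} First I will check the cone axioms. These include commutativity and associativity of $+_p$, and $1\cdot K=K$. They also include the compatibility laws
\[
(\lambda\mu)\cdot K=\lambda\cdot(\mu\cdot K),\qquad (\lambda+\mu)\cdot K=\lambda\cdot K+_p\mu\cdot K,\qquad \lambda\cdot(K+_pL)=\lambda\cdot K+_p\lambda\cdot L.
\]
All of these are immediate from the identity $h^p_{\lambda\cdot K+_p\mu\cdot L}=\lambda h_K^p+\mu h_L^p$. This identity needs $p\geqslant1$ and $h_K,h_L\geqslant0$, which is exactly why we work with bodies containing the origin.

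\textbf{Step 2 (homogeneity and Brunn-Minkowski).} Take $\F=\vol$. Since $\lambda\cdot K=\lambda^{1/p}K$, we get $\vol(\lambda\cdot K)=\lambda^{n/p}\vol(K)$, so $\F$ is $m$-homogeneous with $m=n/p$. The $L_p$ Brunn-Minkowski inequality of Firey and Lutwak reads
\[
\vol\bigl((1-\lambda)\cdot K+_p\lambda\cdot L\bigr)^{p/n}\geqslant(1-\lambda)\vol(K)^{p/n}+\lambda\vol(L)^{p/n}.
\]
This is the $(1/m')$-powered inequality with $m'=n/p$. Moreover, for $p>1$ equality holds if and only if $K$ and $L$ are dilates of each other, with no translations allowed.

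\textbf{Step 3 (first variation).} Next I identify the first variation relative to $E=rB_n$. By Lutwak's $L_p$ variational formula,
\[
\lim_{\varepsilon\to0^+}\frac{\vol(K+_p\varepsilon\cdot E)-\vol(K)}{\varepsilon}=\frac{n}{p}V_p(K,E)=\frac1p\int_{\s^{n-1}}h_E^p\,h_K^{1-p}\,dS_K .
\]
For $E=rB_n$ this equals $r^p$ times a fixed constant times $\S_p(K)$. So $\G_E=c_{n,p}\,r^p\,\S_p$, with the same constant for every body, including $E$ itself. Hence $\mathcal I^{\F}_E=(c_{n,p}r^p)^{-1}\vol/\S_p$, and the abstract inequality \eqref{e: linear BM abstract information functional} becomes \eqref{eq: cociente Lp surface area} after cancelling the common factor.

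The main obstacle I expect is the case where the origin lies on $\partial K$ rather than in its interior. There $h_K^{1-p}$ may blow up, and Lutwak's formula is usually stated only for bodies with the origin in the interior. I would first try to use the version of $V_p(K,E)$ valid for bodies merely containing the origin, where the integral is taken with respect to $S_K$ on the set $\{h_K>0\}$. Failing that, I would argue by approximation. Replace $K$ by $K_\delta=K+_p\delta\cdot B_n$ (or $K+\delta B_n$), which has the origin in its interior. Apply the result to $K_\delta$, and pass to the limit using continuity of $\vol$ and $\S_p$ along this family. The inequality survives the limit. The equality characterization, however, must be handled directly through the abstract equality statement.

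\textbf{Step 4 (equality).} The abstract theorem says that equality for some $\lambda$ forces equality in the $L_p$ Brunn-Minkowski inequality for $K$ and $rB_n$. Since $p>1$, this makes $K$ a dilate $sB_n$ of the ball, that is, a centred Euclidean ball. Conversely, for $K=sB_n$ we have $(1-\lambda)\cdot K+_p\lambda\cdot rB_n=tB_n$ with $t^p=(1-\lambda)s^p+\lambda r^p$. The ratio $\vol/\S_p$ of a centred ball of radius $t$ is a constant multiple of $t^p$, so both sides of \eqref{eq: cociente Lp surface area} are linear in $(1-\lambda)s^p+\lambda r^p$ and coincide. This confirms the equality case.
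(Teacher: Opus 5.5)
Your route is the paper's. You instantiate Theorem~\ref{t: concavity general information functional} on the cone of convex bodies containing the origin, with $+_p$ and the $p$-scalar multiplication, taking $\F=\vol$ and $m=n/p$, so that $\G_{rB_n}=r^p\S_p$; with the paper's normalization of $\S_p$, your constant $c_{n,p}$ is exactly $1$. Two small adjustments are needed. First, restrict the cone to bodies with nonempty interior together with $\{0\}$, as the paper does, so that $\vol$ is positive on it. Second, Remark~\ref{r: mm'} is not needed, since the exponent $p/n$ is already $1/m$. With these, the inequality part is fine.

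The genuine gap is in Step~4. The equality clause of the abstract theorem does not give equality in the $L_p$ Brunn-Minkowski inequality for the pair $K$, $rB_n$. It only says that $F(t)=\vol\bigl((1-t)\cdot K+_p t\cdot rB_n\bigr)^{p/n}$ is affine on $[0,\lambda]$. For a concave $F$ this is strictly weaker than affinity on $[0,1]$, so you cannot infer $F(t)=(1-t)F(0)+tF(1)$ for any $t\in(0,1)$. Set $L_\lambda:=(1-\lambda)\cdot K+_p\lambda\cdot rB_n$ and use the identity $(1-t\lambda)\cdot K+_p t\lambda\cdot rB_n=(1-t)\cdot K+_p t\cdot L_\lambda$. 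Then affinity on $[0,\lambda]$ gives equality in the $L_p$ Brunn-Minkowski inequality for $K$ and $L_\lambda$. Hence $L_\lambda=sK$ for some $s>0$, that is, $(1-\lambda)h(K,u)^p+\lambda r^p=s^p h(K,u)^p$ for all $u\in\s^{n-1}$. This forces $h(K,\cdot)$ to be a positive constant, so $K$ is a centred ball. The paper concludes with exactly this cancellation step, applied to $K$ and $L_\lambda$ rather than to $K$ and $rB_n$. Your verification of the converse is correct.

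Your worry about the origin lying on $\partial K$ goes beyond the paper. The paper's proof simply asserts that $\S_p$ is positive and finite on $\K_{K;E}$, which tacitly puts the origin in the interior of $K$. If the origin lies in a facet of $K$, then $\G_{rB_n}(K)=+\infty$ and hypothesis~(ii) fails at $K$. Your approximation argument would need $\S_p(K_\delta)\to\S_p(K)$, which is not automatic there. When $\S_p(K)=+\infty$ it is simpler to argue directly. Write $L_t=(1-t)\cdot K+_p t\cdot rB_n$; for $t\in(0,1)$ one has $\vol(L_t)/\S_p(L_t)=\frac{p\,r^p}{n}\,F(t)/\bigl(F(t)+(1-t)F'_+(t)\bigr)$. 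So the inequality, even in strict form, reduces to $F(t)>tF'_+(t)$. This follows from concavity, since $F(t)-tF'_+(t)\geqslant F(0)>0$. In particular, no equality can occur in that case.
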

Furthermore, we show that the same abstract principle applies to mixed discriminants of symmetric positive definite matrices, thereby extending the analogy between determinant inequalities and Convex Geometry beyond the level of Brunn-Minkowski inequalities themselves and into the realm of information functionals (see Theorem~\ref{t: cociente mixed discriminants}).

We also develop the dual counterpart of this abstract principle. More precisely, we prove that every positive homogeneous functional satisfying a reverse $(1/m')$-powered Brunn-Minkowski inequality gives rise to an information functional satisfying the corresponding  reverse linear Brunn-Minkowski inequality. This abstract convexity principle naturally yields the dual ine\-qualities for (ratios of) dual quermassintegrals and for the ratio of volume to the $L_p$ dual surface area (see Theorems~\ref{t: cociente dual quermass} and \ref{t: cociente Lp dual volume surface}).

From this perspective, the classical and dual Brunn-Minkowski theories, their $L_p$ extensions, and the theory of mixed discriminants become instances of two parallel abstract constructions, whose properties are dictated solely by homogeneity, first variations, and Brunn-Minkowski type inequalities. The precise formulation of these principles is developed in Section~\ref{s: general information functionals}: the concavity principle is established in Theorem~\ref{t: concavity general information functional}, which gives the precise formulation of the more informal statement presented in Theorem~\ref{t: concavity general information functional introd}, while its convex counterpart is proved in Theorem~\ref{t: convexity general information functional}. Furthermore, in Section~\ref{s: Applications} we obtain various applications of these principles, and we discuss some related issues. More precisely, we show Theo\-rems~\ref{t: cociente Lp surface area} and \ref{t: cociente quermass} in the classical and $L_p$ geometric contexts, Theorems~\ref{t: cociente dual quermass} and \ref{t: cociente Lp dual volume surface} in the corresponding dual settings, and Theorem~\ref{t: cociente mixed discriminants} within the theory of mixed discriminants.

Finally, in Section~\ref{s: counterexample} we show that the homogeneity assumption is not merely technical, but an essential ingredient of the abstract principle we prove here. We provide a counterexample for the information functional associated with the standard Gaussian measure, when dealing with centred Euclidean balls. Although this functional satisfies a sub-homogeneity of degree $n$, together with a corres\-ponding ($1/n$)-powered Brunn-Minkowski inequality on this class, its information functional fails to satisfy a linear Brunn-Minkowski inequality (see Theorem~\ref{t: counterexample gaussian information functional ineq}). Consequently, the exact homogeneity cannot be replaced in general by weaker scaling assumptions. Therefore, taken together, these results show that information functionals are a canonical first-order feature of homogeneous Brunn-Minkowski theory, arising from a common abstract mechanism independent of the particular geometric setting.

\section{Background and preliminary results}\label{s: Preliminaries}
We work in the $n$-dimensional Euclidean space $\R^n$ endowed with the standard scalar pro\-duct $\langle \cdot, \cdot \rangle$. The closed Euclidean unit ball in $\R^n$ is denoted by $B_n$ and its boundary, the unit sphere, by $\s^{n-1}$. A set $A$ is referred to as $0$-symmetric if $A = -A$, and a set of the form $\lambda A$, for some $\lambda \geqslant 0$, is called a dilatate of $A$. Furthermore, the dimension of a set $A$, i.e., the dimension of the smallest affine subspace containing $A$, is denoted by $\dim A$. Moreover, $[x,y]$ is the closed segment with endpoints $x$ and $y$, and we write $A+B=\{x+y: x \in A, \,y \in B\}$ for the Minkowski sum of any nonempty sets $A, B \subset\R^n$. 
A nonempty compact convex subset $K$ of $\R^n$ is referred to as a convex body, and we recall that its support function $h(K,\cdot)$ is given by $h(K,u)=\max\bigl\{ \langle x,u \rangle :x \in K \bigr\}$ for all $u \in \R^n$.
The volume of a measurable set $A \subset \R^n$, i.e., its $n$-dimensional Lebesgue measure, is denoted by $\vol(A)$, and when integrating, as usual, $\dlat x$ stands for $\dlat \vol(x)$. In particular, we write $\kappa_n$ for the volume of the Euclidean unit ball.

\subsection{{Classical and L$_p$ Brunn-Minkowski theories}}
Let $\K^n$ denote the set of all convex bodies in $\R^n$. For two convex bodies $K,E\in\K^n$ and a nonnegative real number $\lambda$, the volume of the Minkowski addition $K+\lambda E$ is expressed as a polynomial of degree at most $n$ in $\lambda$, and it is written as
\begin{equation}\label{e:Steiner}
\vol(K+\lambda E)
=\sum_{k=0}^{n}\binom{n}{k}\W_k(K;E)\lambda^k.
\end{equation}

This expression is known as the \emph{Minkowski-Steiner formula} or \emph{relative Steiner formula} of $K$ with respect to $E$. The coefficients $\W_i(K;E)$, which are known to be nonnegative, are the so-called \emph{relative quermassintegrals} of $K$, and they are a special case of the more generally defined \emph{mixed volumes}, for which we refer to~\cite[Chapter~6]{Gr} and~\cite[Section~5.1]{Sch}. In particular, we have
$\W_0(K;E)=\vol(K)$, $\W_n(K;E)=\vol(E)$, and 
\begin{equation}\label{e: hom W_i}
\W_i(\lambda_1 K;\lambda_2 E)=\lambda_1^{\,n-i}\lambda_2^{\,i}\W_i(K;E)
\end{equation}
for all $\lambda_1,\lambda_2\geqslant0$.
Moreover, if $E$ has full dimension, i.e., $\dim(E)=n$, then 
\begin{equation}\label{e: pos W_i}
\W_i(K;E)>0 \, \text{ if and only if } \, \dim(K)\geqslant n-i.
\end{equation}
Finally, $n\W_1(K;E)$ is called the \emph{relative surface area of} $K$ (the relative Minkowski content of $K$), which will be denoted by $\S(K;E)$. Note that the Minkowski addition, when combined with the volume, gives rise to this notion through the first variation formula
\begin{equation*}
\left.\frac{\dlat^+}{\dlat \varepsilon}\right|_{\varepsilon=0}
\vol(K+\varepsilon E)=\S(K;E).
\end{equation*}

When $E=B_n$, then $\W_i(K;B_n)$ is written $\W_i(K)$ for short, and $\S(K)=\S(K;B_n)$ is the classical surface area. Furthermore, quermassintegrals admit also a Steiner formula, namely
\begin{equation}\label{e:Steiner quermass}
\W_i(K+\lambda E;E)=\sum_{k=0}^{n-i}\binom{n-i}{k}\W_{i+k}(K;E)\lambda^k
\end{equation}
for any $0\leqslant i\leqslant n$.

Relating the volume with the Minkowski sum of two convex bodies $K,E\in\K^n$ with nonempty interior, one is led to the well-known \emph{Brunn-Minkowski inequality}, which is one of the cornerstones of the Brunn-Minkowski theory (for extensive survey articles on this and related inequa\-lities we refer the reader to \cite{Bar,G}). It assures that, for any $\lambda\in (0,1)$, 
\begin{equation}\label{e: BM}
{\vol\bigl((1-\lambda)K+ \lambda E\bigr)^{{1}/{n}} \geqslant (1-\lambda ) \vol(K)^{{1}/{n}} + \lambda \vol(E)^{{1}/{n}}}.
\end{equation}
Equality for some $\lambda \in (0,1)$ holds if and only if $K$ and $E$ are homothetic.

The above inequality admits a further generalization for quermassintegrals, as shows the following result (see \cite[Section~18]{G}).
\begin{thm}\label{t: BM quermass}
Let $K,E\in\K^n$ be two convex bodies with nonempty interior and let $0\leqslant i\leqslant n-1$. Then, for all $\lambda\in(0,1)$,
\begin{equation}\label{e: BM quermass}
\W_i\bigl((1-\lambda)K+\lambda E\bigr)^{1/(n-i)}
\geqslant(1-\lambda)\W_i(K)^{1/(n-i)}+\lambda\W_i(E)^{1/(n-i)}.
\end{equation}
Equality for $0\leqslant i < n-1$, for some $\lambda\in(0,1)$, holds if and only if $K$ and $E$ are homothetic.
\end{thm}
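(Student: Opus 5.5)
We would derive Theorem~\ref{t: BM quermass} from the Brunn-Minkowski inequality for mixed volumes of the form $\vol(K[n-i],B_n[i])$. This is the approach via the Aleksandrov-Fenchel inequality. The idea is to regard $\W_i(K)=V(K[n-i],B_n[i])$ as an $(n-i)$-homogeneous functional of $K$, with the last $i$ entries frozen at $B_n$.

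\textbf{Step 1 (log-concavity of the mixed sequence).} Fix $K,E$ with nonempty interior and set
\[
a_j=V\bigl(K[n-i-j],E[j],B_n[i]\bigr),\qquad 0\leqslant j\leqslant n-i.
\]
The Aleksandrov-Fenchel inequality $V(L_1,L_2,\dots)^2\geqslant V(L_1,L_1,\dots)V(L_2,L_2,\dots)$ then gives $a_j^2\geqslant a_{j-1}a_{j+1}$. All $a_j$ are positive because the bodies are full-dimensional. From log-concavity we obtain $a_j\geqslant a_0^{(n-i-j)/(n-i)}a_{n-i}^{j/(n-i)}$.

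\textbf{Step 2 (expansion).} By multilinearity of mixed volumes,
\[
\W_i\bigl((1-\lambda)K+\lambda E\bigr)=\sum_{j=0}^{n-i}\binom{n-i}{j}(1-\lambda)^{n-i-j}\lambda^j a_j .
\]
Inserting the bound from Step 1 termwise and applying the binomial theorem yields
\[
\W_i\bigl((1-\lambda)K+\lambda E\bigr)\geqslant\bigl((1-\lambda)a_0^{1/(n-i)}+\lambda a_{n-i}^{1/(n-i)}\bigr)^{n-i}.
\]
Since $a_0=\W_i(K)$ and $a_{n-i}=\W_i(E)$, this is \eqref{e: BM quermass}. The case $i=n-1$ is linear, hence an identity, since $\W_{n-1}$ is Minkowski additive, so there is nothing to prove there.

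\textbf{Step 3 (equality).} If $K$ and $E$ are homothetic, equality follows directly from homogeneity \eqref{e: hom W_i} and translation invariance. Conversely, suppose equality holds for some $\lambda$. Then every termwise bound must be an equality, so $a_1^{n-i}=a_0^{n-i-1}a_{n-i}$. This is the Minkowski-type equality $V(K[n-i-1],E,B_n[i])^{n-i}=\W_i(K)^{n-i-1}\W_i(E)$.

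This is the main obstacle. The equality cases of Aleksandrov-Fenchel are not fully known in general, but here the reference bodies are balls. For $i<n-1$ the relevant inequality is a Minkowski inequality for quermassintegrals, and its equality case (homothety of $K$ and $E$) is classical. It can be obtained from Kubota's formula together with the Brunn-Minkowski equality in projections, or it can be cited from \cite[Section~18]{G} and \cite[Section~7.4]{Sch}. I would take that route rather than reprove the equality characterization. The condition $i<n-1$ is exactly what prevents the degenerate linear case $i=n-1$, where equality holds for all pairs.
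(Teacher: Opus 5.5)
Your argument is correct. It differs from the paper only because the paper gives no proof of this statement at all: it is quoted as background from \cite[Section~18]{G} and used only as an input (hypothesis (i) and its equality case) for Theorem~\ref{t: concavity general information functional}.

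You instead give the standard derivation of the inequality from the Aleksandrov--Fenchel inequality. Log-concavity of $a_j=V(K[n-i-j],E[j],B_n[i])$ gives $a_j\geqslant a_0^{(n-i-j)/(n-i)}a_{n-i}^{j/(n-i)}$, and the binomial expansion of $\W_i((1-\lambda)K+\lambda E)$ then yields the inequality. Your reduction of the equality case to $a_1^{n-i}=a_0^{n-i-1}a_{n-i}$ is also correct: all binomial weights are positive for $\lambda\in(0,1)$, and $n-i\geqslant 2$. So are your treatment of homothetic pairs and of the linear case $i=n-1$.

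This makes the inequality self-contained modulo Aleksandrov--Fenchel, but it gains nothing for the equality case. The characterization of equality in $V(K[n-i-1],E,B_n[i])^{n-i}\geqslant\W_i(K)^{n-i-1}\W_i(E)$ is equivalent to the equality statement you are proving; each is routinely derived from the other. It is itself an Aleksandrov--Fenchel equality result with balls as reference bodies, so on that point you rely on exactly the literature the paper cites.

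You should drop the side remark that the equality case follows from Kubota's formula together with the Brunn--Minkowski equality in projections. Write $\W_i(L)=c_{n,i}\int_{G(n,n-i)}\vol_{n-i}(L|U)\,\dlat U$ and set $\phi_L(U)=\vol_{n-i}(L|U)^{1/(n-i)}$. Applying Brunn--Minkowski in each $U$ gives $\W_i((1-\lambda)K+\lambda E)\geqslant c_{n,i}\int((1-\lambda)\phi_K+\lambda\phi_E)^{n-i}\,\dlat U$. However, Minkowski's integral inequality bounds this right-hand side from \emph{above} by $((1-\lambda)\W_i(K)^{1/(n-i)}+\lambda\W_i(E)^{1/(n-i)})^{n-i}$, not from below. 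Hölder's inequality goes the wrong way in the mixed-volume version as well. So projections yield neither the inequality nor its equality case for $1\leqslant i\leqslant n-2$, which is why Aleksandrov--Fenchel is needed. Of the two options you list, only the citation works.
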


Firey \cite{F2} introduced the following notion of $p$\emph{-sum} or $L_p$ \emph{addition}: for two convex bodies containing the origin $K, E \in\K^n$ and $p \geqslant 1$ fixed, there exists a unique convex body $K +_p E $ whose support function is given by 
\begin{equation*}\label{e: p-sum Firey}
{h(K+_p E,\cdot)=\bigl(h(K,\cdot)^p+h(E,\cdot)^p\bigr)^{1/p}}.
\end{equation*}
Note that $+_1$ is nothing but the standard Minkowski addition.
Moreover, for notational convenience, we introduce the $p$-scalar multiplication by 
\begin{equation}\label{eq: p-multiplication scalar}
    \lambda \cdot K=\lambda^{1/p}K,
\end{equation}
for any $\lambda\geqslant 0$.  
We would like to point out that throughout the manuscript, whenever $p \geqslant 1$ is mentioned, we will refer to a real number $p \geqslant 1$.

When $K$ contains the origin in its interior, this notion of $p$-addition gives rise to an $L_p$ analogue of mixed volumes through the first variation formula
\begin{equation}\label{e: def Lp mixed vol}
\left.\frac{\dlat^+}{\dlat \varepsilon}\right|_{\varepsilon=0}
\vol(K+_p\varepsilon\cdot E)=\frac{n}{p}\W_{p,1}(K;E).
\end{equation}
Here $\W_{p,1}(K;E)$ denotes Lutwak's first $L_p$ mixed volume, and has the integral representation (see \cite[Theorem~9.1.1]{Sch}) given by
\begin{equation}\label{eq: integral expression first Lp mixed volume}
\W_{p,1}(K;E)=\frac{1}{n}\int_{\s^{n-1}}h(E,u)^p h(K,u)^{1-p}\,\dlat \S_{n-1}(K,u),
\end{equation}
where $\S_{n-1}(K,\cdot)$ is the surface area measure of $K$. In particular, taking $E=B_n$, one obtains $\W_{p,1}(K):=\W_{p,1}(K;B_n)$, usually referred to as the first $L_p$ quermassintegral of $K$. We also write $\S_p(K):=(n/p)\W_{p,1}(K)$ to denote its $L_p$ surface area. 

Moreover, the $L_p$ version of the Brunn-Minkowski inequality \eqref{e: BM} was originally established by Firey \cite{F2} for convex bodies containing the origin, and later extended by Lutwak, Yang and Zhang (see \cite[Theorem 4]{LYZ}) to arbitrary nonempty compact sets. It states the following:
\begin{thm}\label{thm:BM-Lp}
Let $K, E\in\K^n$ be convex bodies with nonempty interior containing the origin and $p > 1$. Then, for all $\lambda\in(0,1)$,
\begin{equation}\label{eq:BM-Lp}
    {\vol\bigl((1-\lambda){\cdot} K +_p \lambda{\cdot}E\bigr)^{p/n} \geqslant(1-\lambda)\vol(K)^{{p}/n}+\lambda\vol(E)^{{p}/n}}.
\end{equation}
Equality, for some $\lambda \in (0,1)$ and $p>1$, holds if and only if $K$ and $E$ are dilatates.
\end{thm}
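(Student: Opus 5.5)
The statement just given is Theorem~\ref{thm:BM-Lp}, the $L_p$ Brunn-Minkowski inequality of Firey. My plan is to derive it from the classical inequality \eqref{e: BM} via a pointwise comparison of support functions.

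Put $K_\lambda=(1-\lambda)\cdot K+_p\lambda\cdot E$. By \eqref{eq: p-multiplication scalar} its support function is
\[
h(K_\lambda,u)=\bigl((1-\lambda)h(K,u)^p+\lambda h(E,u)^p\bigr)^{1/p}.
\]
Because $p>1$, the power mean of order $p$ dominates the power mean of order $1$. Hence for every choice of scalars $a,b>0$ and every $u\in\s^{n-1}$,
\[
h(K_\lambda,u)\geqslant (1-\lambda)\,h(K/a,u)\,a+\lambda\,h(E/b,u)\,b .
\]
To use this, I normalize. Set $\bar K=K/\vol(K)^{1/n}$ and $\bar E=E/\vol(E)^{1/n}$, so both have volume one. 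Then take $a=\vol(K)^{1/n}$ and $b=\vol(E)^{1/n}$, and define $\mu=(1-\lambda)a^p/\bigl((1-\lambda)a^p+\lambda b^p\bigr)$.

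Rather than the plain $p$-to-$1$ mean comparison, I will use a sharper weighted Hölder inequality:
\[
\bigl((1-\lambda)x^p+\lambda y^p\bigr)^{1/p}\geqslant \bigl((1-\lambda)a^p+\lambda b^p\bigr)^{1/p}\Bigl(\mu\frac{x}{a}+(1-\mu)\frac{y}{b}\Bigr).
\]
This follows from Hölder with exponents $p$ and $p'$, applied to the vectors $\bigl((1-\lambda)^{1/p}x,\lambda^{1/p}y\bigr)$ and $\bigl((1-\lambda)^{1/p'}a^{p-1},\lambda^{1/p'}b^{p-1}\bigr)$, followed by normalization. Applying it pointwise in $u$ with $x=h(K,u)$ and $y=h(E,u)$ gives the containment
\[
K_\lambda\supseteq\bigl((1-\lambda)a^p+\lambda b^p\bigr)^{1/p}\bigl(\mu\bar K+(1-\mu)\bar E\bigr).
\]

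The classical inequality \eqref{e: BM} then gives $\vol(\mu\bar K+(1-\mu)\bar E)\geqslant1$. By monotonicity and $n$-homogeneity of volume,
\[
\vol(K_\lambda)\geqslant\bigl((1-\lambda)a^p+\lambda b^p\bigr)^{n/p},
\]
and raising both sides to the power $p/n$ is exactly \eqref{eq:BM-Lp}.

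For equality, both inequalities used must be equalities. Equality in \eqref{e: BM} for $\mu\in(0,1)$ forces $\bar K$ and $\bar E$ to be homothetic, and since they have equal volume they are translates. Equality in Hölder, for $u$ in a set whose closed convex hull generates everything needed, forces $x/a=y/b$, i.e.\ $h(\bar K,u)=h(\bar E,u)$ on a dense set of $u$. Hence $\bar K=\bar E$, so $K$ and $E$ are dilatates. The converse direction is a direct computation.

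I expect the equality step to be the main obstacle. The difficulty is arguing that equality of volumes under the containment $K_\lambda\supseteq c\,(\mu\bar K+(1-\mu)\bar E)$ forces equality of support functions everywhere. I would handle it through continuity of support functions: if the two convex bodies are nested and have equal volume, they coincide. This gives pointwise Hölder equality, which in turn requires $h(\bar K,u)=h(\bar E,u)$. Because $p>1$ and the weights $1-\lambda$ and $\lambda$ are positive, Hölder's equality condition is strict proportionality, and that settles it.
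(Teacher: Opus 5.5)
Your proof is correct. Note that the paper does not prove this statement: it is quoted as a known result, attributed to Firey and, for compact sets, to Lutwak, Yang and Zhang.

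\textbf{Comparison of routes.} Your argument is a standard self-contained derivation from the classical inequality \eqref{e: BM}. It is also exactly the rescaling argument of Remark~\ref{r: mm'}, applied to $\F=\vol$ on the $L_p$ cone with $m=n/p$ and $m'=n$:
\begin{itemize}
\item the power-mean containment $(1-t)\cdot L+_p t\cdot M\supseteq (1-t)L+tM$ gives $(1/n)$-concavity with respect to $+_p$;
\item your normalized bodies $\bar K,\bar E$ are the paper's $\overline{L},\overline{M}$;
\item your $\mu$ is the paper's $1-\overline{\lambda}$;
\item your weighted H\"older inequality is just the power-mean inequality $\bigl(\mu s^p+(1-\mu)t^p\bigr)^{1/p}\geqslant \mu s+(1-\mu)t$ applied to $s=x/a$, $t=y/b$.
\end{itemize}
The cited route buys generality, since the Lutwak--Yang--Zhang version covers arbitrary compact sets. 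Yours is elementary and yields the equality characterization directly.

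\textbf{Tightening the equality case.} Your write-up of this step can be shortened. The appeal to equality in \eqref{e: BM} and the remark about a dense set of directions are unnecessary. Set $S=(1-\lambda)a^p+\lambda b^p$ and $C=S^{1/p}(\mu\bar K+(1-\mu)\bar E)$. Then $C\subseteq K_\lambda$ and $\vol(C)\geqslant S^{n/p}>0$, so equality of volumes forces $C=K_\lambda$. The reason is not continuity of support functions: if $x\in K_\lambda\setminus C$, then $\mathrm{conv}(C\cup\{x\})\subseteq K_\lambda$ has strictly larger volume than $C$, because $C$ has nonempty interior.

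Hence $h(C,u)=h(K_\lambda,u)$ for every $u\in\s^{n-1}$. H\"older's equality condition, with $p>1$ and the second vector having strictly positive entries, then gives $h(K,u)/a=h(E,u)/b$ for all $u$. This includes directions where both support functions vanish. Therefore $E=(b/a)K$, and the equality argument is complete without any appeal to the equality case of \eqref{e: BM}.
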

For more information on the $L_p$ Brunn-Minkowski theory and its consequences, we refer the reader to \cite[Section 9.1]{Sch} and the references therein.

\subsection{Dual Brunn-Minkowski theory and its L$_p$ counterpart}
We now turn our attention to the dual Brunn-Minkowski theory. In this context, a set $K \subset \R^n$ is called a \emph{star body} if it is compact, star-shaped with respect to the origin (i.e., $K \not= \emptyset$ and $[0,x] \subset K$ for all $x \in K$) and its \emph{radial function} is positive and continuous. We also recall that the radial function $\rho_K(\cdot)$ of a star body $K$ is given by $\rho_K(u):= \sup\{ \lambda \geqslant 0 : \lambda u \in K\}$, for any $u\in\s^{n-1}$.
The set of all star bodies in $\R^n$ is denoted by $\mathcal{S}^n$. If $x, y \in \R^n$, then the \emph{radial addition} $x \widetilde{+} y $ of $x$ and $y$ is defined to be the usual vector sum $x+y$ if $x$ and $y$ are contained in a line through the origin, and $0$ otherwise. For $K,E \in \mathcal{S}^n$ the \emph{radial sum} 
\[
K \widetilde{+} E := \bigl\{x \widetilde{+ } y :x \in K, y \in E \bigr\}
\]
then has the property that $\rho_{K \widetilde{+} E }(u)= \rho_K(u)+ \rho_E(u)$. 

\smallskip

For two star bodies $K,E\in\mathcal{S}^n$ and a nonnegative real number $\lambda$, the volume of the radial sum \(K\widetilde{+}\lambda E\) expands as a polynomial of degree $n$ in $\lambda$ yielding the \emph{dual relative Steiner formula}
\begin{equation*}\label{e:dualSteiner}
\vol\bigl(K\widetilde{+}\lambda E\bigr)=\sum_{i=0}^{n}\binom{n}{i}\widetilde{\W}_i(K;  E)\lambda^i.
\end{equation*}
The coefficients \(\widetilde{\W}_i(K;E)\) are the so-called \emph{dual relative quermassintegrals} of \(K\), and they are a special case of the more generally defined \emph{dual mixed volumes}, for which we refer to \cite{L} and \cite[Section~9.3]{Sch}. They admit the integral representation 
\begin{equation}\label{e:dualWint}
\widetilde{\W}_i(K;E)=\frac{1}{n}\int_{\mathbb{S}^{n-1}} \rho_K(u)^{n-i}\rho_E(u)^i \,\dlat u,
\end{equation}
where here we use $\dlat u$ to denote integration with respect to the spherical Lebesgue measure. Note that, from the well-known formula
\begin{equation}\label{e: vol integral radial function polar coord}
\vol(K)=\frac{1}{n}\int_{\mathbb{S}^{n-1}} \rho_K(u)^{n} \,\dlat u,
\end{equation}
one clearly has $\widetilde{\W}_0(K; E)=\vol(K)$ and $\widetilde{\W}_n(K;E)=\vol(E)$. Furthermore, we have
\begin{equation}\label{e: hom widetilde{W}_i}
\widetilde{\W}_i(\lambda_1 K; \lambda_2 E)=\lambda_1^{n-i} \lambda_2^i \widetilde{\W}_i(K;E)
\end{equation}
for all $\lambda_1, \lambda_2 \geqslant 0$. Again, $n \widetilde{\W}_1(K;E)= \widetilde{\S}(K;E)$ is called the \emph{(relative) dual surface area} of $K$.

\smallskip

When $E=B_n$, then $\widetilde{\W}_i(K;E)$ is written $\widetilde{\W}_i(K)$, and $\widetilde{\S}(K;B_n)=\widetilde{\S}(K)$ is the dual surface area (the dual Minkowski content of $K$). Moreover, the dual quermassintegrals also satisfy the Steiner formula
\begin{equation}\label{eq: steiner formula widetilde{W}}
\widetilde{\W}_i\bigl(K\widetilde{+}\lambda E; E\bigr)=\sum_{k=0}^{n-i} \binom{n-i}{k}\widetilde{\W}_{i+k}(K;E)\lambda^k,
\end{equation}
for any $0\leqslant i\leqslant n$.
In this setting, we have the analogue of \eqref{e: BM} in the dual Brunn-Minkowski theory. It 
states that if \(K,E\in\mathcal{S}^n\) then, for any $\lambda \in (0,1)$,
\begin{equation*}\label{e:dualBM}
\vol\bigl((1-\lambda)K\widetilde{+}\lambda E \bigr)^{1/n}\leqslant(1-\lambda)\vol(K)^{1/n}+ \lambda \vol(E)^{1/n}.
\end{equation*}
Equality for some $\lambda \in (0,1)$ holds if and only if \(K\) and \(E\) are dilatates.
This inequality extends to dual quermassintegrals as follows, as a direct consequence of their integral representation \eqref{e:dualWint} jointly with Minkowski's integral inequality (see \cite[Lemma~9.3.2]{Sch}).
\begin{thm}\label{t:dualBMquerm}
Let \(K,E\in\mathcal{S}^n\) be two star bodies in $\R^n$ and let \(0\leqslant i\leqslant n-1\). Then, for all
\(\lambda\in(0,1)\),
\begin{equation}\label{e:dualBMquerm}
\widetilde{\W}_i\bigl((1-\lambda)K\widetilde{+}\lambda E\bigr)^{{1}/{(n-i)}} \leqslant (1-\lambda)\widetilde{\W}_i(K)^{{1}/{(n-i)}}+\lambda \widetilde{\W}_i(E)^{{1}/{(n-i)}}.
\end{equation}
Equality for \(0\leqslant i<n-1\), for some \(\lambda\in(0,1)\), holds if and only if \(K\) and \(E\) are dilatates.
\end{thm}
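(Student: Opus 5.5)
The plan is to use the integral representation \eqref{e:dualWint} and reduce the inequality to Minkowski's integral inequality on $L_{n-i}(\s^{n-1})$. Set $q=n-i\geqslant 1$. Since $\rho_{(1-\lambda)K\widetilde{+}\lambda E}=(1-\lambda)\rho_K+\lambda\rho_E$, the representation \eqref{e:dualWint} with $E=B_n$ (so $\rho_{B_n}\equiv1$) gives
\[
\widetilde{\W}_i\bigl((1-\lambda)K\widetilde{+}\lambda E\bigr)^{1/q}
=n^{-1/q}\bigl\|(1-\lambda)\rho_K+\lambda\rho_E\bigr\|_{L_q(\s^{n-1})}.
\]
By Minkowski's inequality in $L_q$ for $q\geqslant1$, this is at most
\[
n^{-1/q}\bigl((1-\lambda)\|\rho_K\|_q+\lambda\|\rho_E\|_q\bigr)=(1-\lambda)\widetilde{\W}_i(K)^{1/q}+\lambda\widetilde{\W}_i(E)^{1/q}.
\]
This already proves \eqref{e:dualBMquerm}. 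Radial functions are positive and continuous, so all norms are finite and positive.

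For the equality case we need $0\leqslant i<n-1$, that is, $q>1$. Here we use the standard characterization of equality in Minkowski's inequality. For $q>1$, equality $\|f+g\|_q=\|f\|_q+\|g\|_q$ with $f,g\geqslant0$ nonzero holds if and only if $f=cg$ almost everywhere for some $c>0$. This follows from the strict convexity of $L_q$, or equivalently from the equality case in Hölder's inequality used in the standard proof. We apply it with $f=(1-\lambda)\rho_K$ and $g=\lambda\rho_E$. Equality for some $\lambda\in(0,1)$ then forces $\rho_K=c\rho_E$ almost everywhere on $\s^{n-1}$. By continuity of radial functions this holds everywhere, so $K=cE$ and the bodies are dilatates. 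Conversely, if $K=cE$, then $(1-\lambda)K\widetilde{+}\lambda E=((1-\lambda)c+\lambda)E$. The homogeneity \eqref{e: hom widetilde{W}_i} then gives equality for every $\lambda$.

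The only delicate point is this equality step. One must check that the almost-everywhere proportionality in the equality case of Minkowski's inequality upgrades to pointwise proportionality; continuity and positivity of $\rho_K,\rho_E$ settle it. One should also note why $i=n-1$ is excluded: then $q=1$, $\widetilde{\W}_{n-1}$ is linear in $\rho_K$, and equality always holds.
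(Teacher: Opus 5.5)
Your proof is correct and takes the same approach as the paper. The paper gives no separate proof: it states the inequality as a direct consequence of the integral representation \eqref{e:dualWint} and Minkowski's integral inequality (citing \cite[Lemma~9.3.2]{Sch}). You carry out exactly that, and you handle the equality case properly: equality in Minkowski's inequality for $q=n-i>1$ gives $\rho_K=c\rho_E$ almost everywhere, and continuity of the radial functions upgrades this to everywhere.
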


With the introduction of the \emph{\(p\)-radial addition}, an \(L_p\) version of the dual Brunn-Minkowski theory emerges in analogy with the classical \(L_p\) one. For \(K,E\in\mathcal{S}^n\) and arbitrary \(p\in\R\setminus\{0\}\), the \(p\)-radial sum \(K\,\widetilde{+}_p\,E\) is defined by
\[
\rho_{K\widetilde{+}_p E}(u)^p=\rho_K(u)^p+\rho_E(u)^p,
\]
whereas the \(p\)-scalar multiplication is again given by \eqref{eq: p-multiplication scalar}.

For a star body $K\in\mathcal{S}^n$, this $p$-radial addition gives rise to an $L_p$ analogue of dual mixed volumes through the first variation formula
\begin{equation}\label{e: def Lp dual mixed vol}
\left.\frac{\dlat^+}{\dlat \varepsilon}\right|_{\varepsilon=0}
\vol\bigl(K \widetilde{+}_p \,\varepsilon\cdot E\bigr)=\frac{n}{p}\,\widetilde{\W}_{p,1}(K;E).
\end{equation}
Here $\widetilde{\W}_{p,1}(K;E)$, as observed by Haberl \cite{Ha}, is given by  
\begin{equation}\label{e: integral expression first dual Lp mixed volume}
\widetilde{\W}_{p,1}(K;E)=\frac{1}{n}\int_{\s^{n-1}}\rho_E(u)^p \rho_K(u)^{n-p}\,\dlat u,
\end{equation}
as a consequence of the integral formula \eqref{e: vol integral radial function polar coord}.
Following our notation, we write 
\[
\widetilde{\S}_p(K):=\frac{n}{p}\,\widetilde{\W}_{p,1}(K;B_n)
\]
to denote the $L_p$ dual surface area of $K$.

Moreover, the $L_p$ version of the dual Brunn-Minkowski inequality for star bodies states the following (see \cite[p.~510]{Sch}):
\begin{thm}\label{thm: dual BM-Lp}
Let $K, E\in\mathcal{S}^n$ be star bodies and let $0<p\leqslant n$. Then, for all $\lambda\in(0,1)$,
\begin{equation}\label{e: dual BM-Lp}
\vol\bigl((1-\lambda)\cdot K \,\widetilde{+}_p\; \lambda\cdot E\bigr)^{p/n} \leqslant(1-\lambda)\vol(K)^{p/n}+\lambda\vol(E)^{p/n}.
\end{equation}
Equality, for some $\lambda \in (0,1)$ and $p\neq n$, holds if and only if $K$ and $E$ are dilatates.
\end{thm}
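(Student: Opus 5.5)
The plan is to use the integral representation of the volume in polar coordinates together with Minkowski's integral inequality, as for Theorem~\ref{t:dualBMquerm}. Write $M=(1-\lambda)\cdot K\,\widetilde{+}_p\,\lambda\cdot E$. By the definition of $p$-radial addition and of the $p$-scalar multiplication \eqref{eq: p-multiplication scalar},
\[
\rho_M(u)^p=(1-\lambda)\rho_K(u)^p+\lambda\rho_E(u)^p
\]
for every $u\in\s^{n-1}$. Set $f=\rho_K^p$ and $g=\rho_E^p$, which are positive continuous functions on the sphere, and put $q=n/p\geqslant 1$ (this uses $0<p\leqslant n$). Then \eqref{e: vol integral radial function polar coord} gives
\[
\vol(M)=\frac1n\int_{\s^{n-1}}\bigl((1-\lambda)f+\lambda g\bigr)^{q}\,\dlat u .
\]
Raising to the power $p/n=1/q$ shows that $\vol(M)^{p/n}=n^{-1/q}\,\|(1-\lambda)f+\lambda g\|_{L^q(\s^{n-1})}$. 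In the same way, $\vol(K)^{p/n}=n^{-1/q}\|f\|_{q}$ and $\vol(E)^{p/n}=n^{-1/q}\|g\|_{q}$.

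Since $q\geqslant 1$, Minkowski's inequality in $L^q(\s^{n-1})$ gives
\[
\|(1-\lambda)f+\lambda g\|_q\leqslant (1-\lambda)\|f\|_q+\lambda\|g\|_q .
\]
Multiplying by $n^{-1/q}$ yields \eqref{e: dual BM-Lp}. When $p=n$ we have $q=1$, the norm is linear on nonnegative functions, and equality always holds; this is why that case is excluded from the equality characterization.

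For the equality case with $p<n$, i.e.\ $q>1$, I would use the strict convexity of $L^q$. Equality in Minkowski's inequality for positive continuous functions holds if and only if $(1-\lambda)f$ and $\lambda g$ are proportional, that is, $g=cf$ for some $c>0$. This forces $\rho_E=c^{1/p}\rho_K$, so $E$ is a dilatate of $K$. Conversely, if $E=tK$ then both sides equal $\bigl((1-\lambda)+\lambda t^p\bigr)\vol(K)^{p/n}$, since $\vol(M)=\bigl((1-\lambda)+\lambda t^p\bigr)^{n/p}\vol(K)$.

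The main obstacle is this equality step. I have to make sure the standard equality characterization of Minkowski's inequality (equality iff the functions are a.e.\ proportional) applies, and then upgrade a.e.\ proportionality to everywhere proportionality using the continuity and positivity of radial functions of star bodies. Both facts are standard, so the argument closes once they are checked.
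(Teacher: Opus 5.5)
Your proof is correct and is essentially the standard argument: the paper does not reprove this statement (it cites Schneider's monograph), but for the companion inequality in Theorem~\ref{t:dualBMquerm} it points to the same mechanism you use, namely the polar-coordinate formula for the volume combined with Minkowski's integral inequality in $L^{n/p}(\s^{n-1})$, with $p=n$ giving the linear case. Your equality analysis is also sound: strict Minkowski for $n/p>1$, then upgrading a.e.\ proportionality of $\rho_K^p$ and $\rho_E^p$ to everywhere proportionality via continuity and positivity of the radial functions.
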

For more information on the dual Brunn-Minkowski theory, we refer to \cite[Section 9.3]{Sch} and the references therein.

\subsection{Mixed discriminants}
Let $\Sym(n)$ and $\Sym^{++}(n)$ denote the cones of real symmetric and real symme\-tric positive definite $n\times n$ matrices, respectively. Given $A,B\in\Sym(n)$ and $\lambda\geqslant0$, the determinant
of the matrix $A+\lambda B$ is a polynomial of degree at most $n$ in $\lambda$,
and it can be written as
\begin{equation}\label{eq:detpoly}
\det(A+\lambda B)=\sum_{k=0}^{n}\binom{n}{k}\D_k(A;B)\lambda^k.
\end{equation}
The coefficients $\D_k(A;B)$ are called the \emph{relative mixed discriminants}
of $A$ and $B$. They satisfy that $\D_0(A;B)=\det(A)$, $\D_n(A;B)=\det(B)$, and 
\begin{equation}\label{e: hom D_i}
\D_i(\lambda_1 A;\lambda_2 B)
=\lambda_1^{\,n-i}\lambda_2^{\,i}\D_i(A;B)
\end{equation}
for all $\lambda_1,\lambda_2\geqslant0$. Moreover, 
\begin{equation}\label{e: pos D_i}
\D_i(A;B)>0 \, \text{ whenever } \, A,B\in\Sym^{++}(n)
\end{equation}
(see \cite[Theorem~5.5.4]{Sch}).

Following the notation used in the geometric setting, when $B=I_n$ is the identity matrix, $\D_i(A;I_n)$ is written as $\D_i(A)$ for short. Furthermore, mixed discriminants also admit a Steiner formula, namely
\begin{equation}\label{e:Steiner mixed discriminants}
\D_i(A+\lambda B;B)=\sum_{k=0}^{n-i}\binom{n-i}{k}\D_{i+k}(A;B)\lambda^k
\end{equation}
for any $0\leqslant i\leqslant n$.

Formula \eqref{eq:detpoly} may be regarded as the matrix analogue of the Minkowski-Steiner formula \eqref{e:Steiner} for convex bodies. Indeed, mixed discriminants enjoy many properties parallel to those of mixed volumes. Furthermore, although there are some inequalities for mixed discriminants which are not fulfilled for mixed volumes of arbitrary convex bodies (see e.g.~\cite{AFO}), several fundamental ine\-qualities in Convex Geometry admit natural counterparts in matrix analy\-sis. A remarkable example is the following matrix analogue of the Brunn-Minkowski inequality \eqref{e: BM quermass} for quermassintegrals, which is a consequence of Aleksandrov's inequality for mixed discriminants (see \cite[Theorem~5.5.4]{Sch}). For further information about mixed discriminants we refer the reader to \cite[Section~5.5]{Sch} and the references therein.

\begin{thm}\label{thm:BM-mixed-discriminants}
Let $A,B\in\Sym^{++}(n)$ be symmetric positive definite matrices and let
$0\leqslant i\leqslant n-1$. Then, for all $\lambda\in(0,1)$,
\begin{equation}\label{eq:BM-mixed-discriminants}
\D_i\bigl((1-\lambda)A+\lambda B\bigr)^{1/(n-i)}\geqslant (1-\lambda)\D_i(A)^{1/(n-i)}+\lambda \D_i(B)^{1/(n-i)}.
\end{equation}
Equality for $0\leqslant i < n-1$, for some $\lambda\in(0,1)$, holds if and only if
$A=\alpha B$ for some $\alpha>0$.
\end{thm}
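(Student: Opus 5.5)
The plan is to reduce \eqref{eq:BM-mixed-discriminants} to a concavity statement for a single polynomial in one variable, and then obtain that concavity from Aleksandrov's inequality for mixed discriminants (see \cite[Theorem~5.5.4]{Sch}).

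\emph{Setup.} Write $m=n-i\geqslant 1$ and $C=B-A\in\Sym(n)$. For $t\in[0,1]$ set $A_t=A+tC=(1-t)A+tB\in\Sym^{++}(n)$, and let
\[
g(t)=\D_i(A_t)=\D(A_t[m],I_n[i]),
\]
where $\D(\cdot,\dots,\cdot)$ is the symmetric multilinear mixed discriminant and $[k]$ means $k$ repeated entries. By multilinearity $g$ is a polynomial in $t$, and by \eqref{e: pos D_i} it is positive on $[0,1]$. Differentiating gives
\[
g'(t)=m\,\D(C,A_t[m-1],I_n[i]),\qquad g''(t)=m(m-1)\,\D(C,C,A_t[m-2],I_n[i]).
\]
The inequality \eqref{eq:BM-mixed-discriminants} says exactly that $g^{1/m}$ is concave on $[0,1]$, evaluated at $t=\lambda$. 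For $m=1$ the function $g$ is affine by linearity, so only $m\geqslant2$ needs work.

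\emph{Concavity.} Since $g>0$ and $g$ is smooth, concavity of $g^{1/m}$ is equivalent to
\[
g\,g''\leqslant\Bigl(1-\tfrac1m\Bigr)g'^2 .
\]
Substituting the formulas above, this becomes
\[
\D(C,A_t[m-1],I_n[i])^2\geqslant \D(C,C,A_t[m-2],I_n[i])\;\D(A_t[m],I_n[i]).
\]
This is precisely Aleksandrov's inequality for mixed discriminants. In it the first slot $C$ is an arbitrary symmetric matrix, and all remaining entries ($A_t$ and $I_n$) are positive definite. This is the form stated in \cite[Theorem~5.5.4]{Sch}, a reverse Cauchy--Schwarz inequality for the Lorentzian quadratic form $X\mapsto \D(X,X,A_t[m-2],I_n[i])$.

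\emph{Equality case.} If $A=\alpha B$, then homogeneity \eqref{e: hom D_i} makes $g^{1/m}$ affine, so equality holds. Conversely, suppose $m\geqslant2$ and equality holds at some $\lambda\in(0,1)$. A concave function that agrees with its chord at an interior point is affine on the whole interval, so $g^{1/m}$ is affine on $[0,1]$. Hence equality holds in Aleksandrov's inequality for every $t$, in particular at $t=0$. The equality case of that inequality (again \cite[Theorem~5.5.4]{Sch}) gives $C=\mu A$ for some $\mu\in\R$. Then $B=(1+\mu)A$, and $1+\mu>0$ because $B$ is positive definite.

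\emph{Main obstacle.} The delicate step is making sure the cited Aleksandrov inequality and its equality characterization apply with one entry, $C$, merely symmetric rather than positive definite, and with $m-2$ copies of $A_t$ and $i$ copies of $I_n$ filling the other slots. If the reference covers only positive semidefinite entries, I would first add a large multiple of $I_n$ to $C$ and use the polarization identity to reduce to that case, or else argue directly via the hyperbolicity of $\det$ in the direction $A_t$.
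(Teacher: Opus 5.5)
Your argument is correct and is exactly the standard derivation the paper has in mind. The paper gives no proof of its own; it only notes that the inequality is a consequence of Aleksandrov's inequality for mixed discriminants \cite[Theorem~5.5.4]{Sch}, and your reduction to $g\,g''\leqslant(1-\tfrac1m)g'^2$, together with the equality analysis at $t=0$, makes that remark explicit.

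Your fallback is unnecessary, since the usual statement of Aleksandrov's inequality already allows one arbitrary symmetric entry. Should you need it, shift $C$ by a multiple of $A_t$ rather than of $I_n$. The reason is that $\D(C,A_t[m-1],I_n[i])^2-\D(C,C,A_t[m-2],I_n[i])\,\D(A_t[m],I_n[i])$ is invariant under $C\mapsto C+sA_t$, and so is the equality condition $C\in\R A_t$.
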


The case $i=0$ of the above result is the classical Minkowski determinant inequality, which asserts that the functional $A\mapsto (\det A)^{1/n}$, defined on the cone of symmetric po\-si\-tive definite matrices, is concave with respect to matrix addition. This result admits remarkable refinements that reveal a deeper analogy with the Brunn-Minkowski theory. Among them, Bergström \cite{B} proved that for any $A, B\in\Sym^{++}(n)$ one has
\begin{equation*}
\frac{\det(A+B)}{\det(A_i+B_i)}
\geqslant\frac{\det(A)}{\det(A_i)}+\frac{\det(B)}{\det(B_i)},
\end{equation*}
where $A_i$ and $B_i$ denote the $(n-1)\times(n-1)$ principal submatrices obtained by deleting the $i$-th row and column. Shortly afterwards Fan \cite{Fa} established the more general inequality
\begin{equation*}
\left(\frac{\det(A+B)}{\det(A_k+B_k)}\right)^{1/k}
\geqslant\left(\frac{\det(A)}{\det(A_k)}\right)^{1/k}+\left(\frac{\det(B)}{\det(B_k)}\right)^{1/k},
\end{equation*}
where $A_k$ and $B_k$ are obtained by deleting $k$ rows and the corresponding columns. The case $k=n$ reduces precisely to Minkowski's determinant inequality (with the convention $\det(M_n)=1$ for any matrix $M\in\Sym^{++}(n)$). 

In view of the close analogy between determinant inequalities and mixed volumes
inequalities, V. Milman asked whether Bergström's ine\-quality admits a geometric counterpart within the Brunn-Minkowski theory. 
A first answer was obtained by Giannopoulos, Hartzoulaki and Paouris \cite{GHP} in the particular case where one of the summands is a Euclidean ball, thereby providing a geometric analogue of Bergstr\"om's inequality. Their result, whose proof relies on the linearity of the mixed volumes and the Aleksandrov-Fenchel inequality, reads as follows.
\begin{thm}\label{t: GHP}
Let $K\in\K^n$ be a convex body with nonempty interior and let $0\leqslant i\leqslant n-1$. Then, for any $r>0$,
\begin{equation}\label{e: GHP}
\frac{\W_i(K+rB_n)}{\W_{i+1}(K+rB_n)}
\geqslant\frac{\W_i(K)}{\W_{i+1}(K)}+\frac{\W_i(rB_n)}{\W_{i+1}(rB_n)}.
\end{equation}
\end{thm}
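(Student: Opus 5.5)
We will deduce \eqref{e: GHP} from the concavity principle announced in Theorem~\ref{t: concavity general information functional introd}, applied on the cone $\K^n$ (with Minkowski addition and positive dilations), to $\F=\W_i$ with $E=rB_n$. The first step is to record the three ingredients. First, $\W_i$ is $m$-homogeneous with $m=n-i$ by \eqref{e: hom W_i}. Second, by Theorem~\ref{t: BM quermass} it satisfies a $(1/m')$-powered Brunn-Minkowski inequality with $m'=n-i$, so $m=m'$. Third, its first variation relative to $E=rB_n$ follows from the Steiner formula \eqref{e:Steiner quermass} with $E=B_n$ and $\lambda=\varepsilon r$:
\[
\left.\frac{\dlat^+}{\dlat \varepsilon}\right|_{\varepsilon=0}\W_i(K+\varepsilon\, rB_n)=(n-i)\,r\,\W_{i+1}(K).
\]
Hence $\G_E(K)=(n-i)r\W_{i+1}(K)$. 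This is positive, since $K$ has nonempty interior and \eqref{e: pos W_i} applies.

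The associated information functional is therefore
\[
\mathcal I_E^{\F}(K)=\frac{\W_i(K)}{(n-i)\,r\,\W_{i+1}(K)}.
\]
Since the factor $(n-i)r$ is constant, the desired inequality is unchanged if we multiply through by it. By Corollary~\ref{c: equiv superadditivity and linear BM}, the linear Brunn-Minkowski inequality \eqref{e: linear BM abstract information functional} for $\mathcal I_E^{\F}$ is equivalent to superadditivity, $\mathcal I_E^{\F}(K+E)\geqslant \mathcal I_E^{\F}(K)+\mathcal I_E^{\F}(E)$. Multiplying this by $(n-i)r$ gives exactly \eqref{e: GHP}.

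If one prefers to see the mechanism directly rather than cite the abstract theorem, set $f(t)=\W_i(K+tE)$. Theorem~\ref{t: BM quermass}, combined with homogeneity, makes $f^{1/(n-i)}$ concave on $[0,\infty)$. Homogeneity also gives $f(t)=t^{n-i}\W_i(K/t+E)$, so that $f(t)/t^{n-i}\to \W_i(E)$ as $t\to\infty$. Concavity of $g=f^{1/m}$ then yields that $g/g'$ grows at least linearly:
\[
\frac{g(t)}{g'(t)}\geqslant \frac{g(0)}{g'(0)}+t.
\]
Since $\G_E(K+tE)=f'(t)=m\,g^{m-1}g'$, we have $\mathcal I_E^{\F}(K+tE)=g/(mg')$. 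Evaluating at $t=1$, and using that $\mathcal I_E^{\F}(E)=1/m$ by homogeneity, gives the inequality.

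The main obstacle I expect is the differentiability bookkeeping in this direct route. One must check that the one-sided derivative at $t>0$ of $\W_i(K+tE)$ coincides with $\G_E(K+tE)$, which holds here because $f$ is a polynomial. One must also justify the derivative inequality for $g$ along the whole ray; I would obtain it from the fact that the tangent line of the concave function $g$ at $t$ lies above $g(0)$, together with the comparison of $g$ against its asymptotic linear growth. Equality would trace back to equality in Theorem~\ref{t: BM quermass}, but the statement here does not require it.
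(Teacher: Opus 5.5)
Your main route is the paper's own proof: take $\F=\W_i$, $E=rB_n$, $m=n-i$, get $\G_E=(n-i)r\W_{i+1}$ from \eqref{e:Steiner quermass}, apply Theorem~\ref{t: concavity general information functional} (this is Theorem~\ref{t: cociente quermass}), pass to the additive form via Corollary~\ref{c: equiv superadditivity and linear BM}, and multiply by $(n-i)r$. Two small remarks: you should work, as the paper does, on convex bodies with nonempty interior together with $\{0\}$ rather than on all of $\K^n$, so that $\W_i$ is positive and Theorem~\ref{t: BM quermass} applies; and your optional ray argument is also valid, since it only needs $g(t)-tg'(t)\geqslant g(0)$ and $0<g'(t)\leqslant g'(0)$ (positivity coming from $\W_{i+1}>0$), which avoids the two-case analysis the paper carries out along the segment.
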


This result may be viewed as a higher-order version of the conjectured inequality \eqref{e: DCT ineq information functional}, proposed by Dembo, Cover and Thomas in \cite{DCT}, when one of the summands is a Euclidean ball. Apart from the above-mentioned contributions, other questions on the information functional have been of interest in the literature (see e.g. \cite{LRS} and the references therein). 

The appearance of the quotients of type \eqref{e: general information functional} is far from accidental. Indeed, these examples reveal a common pattern: every positive homogeneous functional considered above naturally comes equipped with a distinguished first-order companion arising from differentiation along the underlying addi\-tion. The latter measures the infinitesimal growth of the original quantity and therefore contains geometric information of a different nature. When defined properly, both objects possess compatible homogeneity properties (see Lemma~\ref{lem:homvar}), su\-gges\-ting that they should be considered simultaneously rather than independently. This simultaneous treatment is indeed the main focus of study of the follo\-wing section.

\section{A Brunn-Minkowski inequality for general information functionals}\label{s: general information functionals}

The purpose of this section is to formulate and prove the abstract principle(s) from which all the applications in this paper will follow (see Section~\ref{s: Applications} for the applications). To this end, we first introduce the general setting of abstract convex cones, and positive homogeneous functionals satisfying a corresponding Brunn-Minkowski inequality. We also introduce the notion of first variation and its basic properties.
We start by fixing an algebraic structure of the set $\K$, on which we will define a suitable functional. We follow the terminology used in Schneider's monograph (see \cite[p.48]{Sch}).

\begin{definition}[Abstract convex cone]\label{def:cone}
A commutative semigroup $(\K,\oplus)$ with additive identity $\boldsymbol{0}$, endowed with a scalar multiplication $(r,K) \mapsto r \cdot K$ by nonnegative real numbers, is called an \emph{abstract convex cone} if the following rules hold for all $K,E\in\K$ and all $r,s \geqslant 0$:
\begin{align*}
r\cdot (K\oplus E)&=r\cdot K\oplus r\cdot E, \,\\ 
(r+s)\cdot K&=r\cdot K\oplus s\cdot K, \,\\ 
(rs)\cdot K&=r\cdot(s\cdot K), \,\\
1\cdot K&=K.
\end{align*}
We will also assume that $0\cdot K=\boldsymbol{0}=r\cdot \boldsymbol{0}$, and that $K\oplus E\neq\boldsymbol{0}$ for any $K,E\in\K$ with $K,E\neq\boldsymbol{0}$. Note also that one has $r\cdot K\neq\boldsymbol{0}$ for all $r>0$ and $K\neq\boldsymbol{0}$.
\end{definition}
Throughout this work, $(\K, \oplus, \cdot)$ will denote an abstract convex cone. Sometimes we will work in the subcone (up to the inclusion of $\boldsymbol{0}$) generated by the family of all linear combinations of $K$ and $E$, for certain $K,E\in\K$ fixed (with $K,E\neq\boldsymbol{0}$), given by 
\[
\K_{K;E}=\bigl\{r\cdot K\oplus s\cdot E:r,s\geqslant 0, \,r+s>0\bigr\}.
\] 
We will further consider functionals $\F:(\K,\oplus,\cdot)\longrightarrow\R_{\geqslant0}$, for short written 
$\F:\K\longrightarrow\R_{\geqslant0}$,
that are homogeneous and satisfy a suitable Brunn-Minkowski inequality. First we establish what we mean by a positive functional in this paper.

\begin{definition}[Positive functional]\label{def:pos}
Let $(\K,\oplus,\cdot)$ be an abstract convex cone. A functional $\F:\K\longrightarrow\R_{\geqslant0}$ is positive if $\F(K)>0$ for all $K\neq\boldsymbol{0}$.
\end{definition}

For completeness, in the following definition we recall the notion of homogeneous functional.

\begin{definition}[$m$-homogeneous functional]\label{def:hom}
Let $(\K,\oplus,\cdot)$ be an abstract convex cone and let $m>0$. A nonnegative functional $\F:\K\longrightarrow\R_{\geqslant 0}$ is called \emph{$m$-homogeneous} or homogeneous of degree $m$, if for all $K\in\K$ and all $r\geqslant 0$,
\begin{equation*}\label{e: homog}
\F(r\cdot K)=r^{m}\F(K).
\end{equation*}
\end{definition}
Another key ingredient in our approach is the fact that the consi\-dered functional satisfies a $(1/m)$-powered Brunn-Minkowski inequality, often referred to as a $(1/m)$-concave functional. We recall this notion, and its reverse form, in the follo\-wing definition, for completeness.
\begin{definition}\label{def:conc}
Let $(\K,\oplus,\cdot)$ be an abstract convex cone and let $m>0$. 
\begin{itemize}
\item[i)] A functional $\F:\K\longrightarrow\R_{\geqslant 0}$ is \emph{$(1/m)$-concave} if for all $L,M\in\K$ with $L,M\neq\boldsymbol{0}$, and any $t\in[0,1]$,
\begin{equation}\label{e: 1/m concave functional F with L,M}
\F\bigl((1-t)\cdot L\oplus t\cdot M\bigr)^{1/m}
\geqslant (1-t)\F(L)^{1/m}+t\F(M)^{1/m}.
\end{equation}
\item[ii)] Analogously, $\F$ is called \emph{$(1/m)$-convex} if for all $L,M\in\K$ with $L,M\neq\boldsymbol{0}$, and any $t\in[0,1]$,
\begin{equation}\label{e: 1/m convex functional F with L,M}
\F\bigl((1-t)\cdot L\oplus t\cdot M\bigr)^{1/m}
\leqslant (1-t)\F(L)^{1/m}+t\F(M)^{1/m}.
\end{equation}
\end{itemize}
\end{definition}

\begin{remark}
Let $(\K,\oplus,\cdot)$ be an abstract convex cone and let $m>0$. Let $K,E\in\K$, $K,E\neq\boldsymbol{0}$, and let $\F:\K\longrightarrow\R_{\geqslant 0}$ be a functional. For short we will say that $\F$ is $(1/m)$-concave on $\K_{K;E}$ (respectively, $(1/m)$-convex on $\K_{K;E}$) if \eqref{e: 1/m concave functional F with L,M} (respectively, \eqref{e: 1/m convex functional F with L,M}) holds for all $L, M\in\K_{K;E}$.
\end{remark}

The interplay between homogeneity and concavity is essential. In par\-ticular, as we shall see now, it guarantees the existence of a \emph{first variation}. Before showing it, we give the precise definition of the latter notion.
\begin{definition}[First variation of a functional]\label{def:firstvar}
Let $(\K,\oplus,\cdot)$ be an abstract convex cone, let $E\in\K$ be fixed and let $\F:\K\longrightarrow\R_{\geqslant 0}$ be a functional. 
The \emph{first variation} of $\F$ relative to $E$ is the functional $\G_E:\K\longrightarrow \R\cup\{\pm\infty\}$ defined by the right derivative
\begin{equation}\label{e: functional G}
\G_E(K):=\left.\frac{\dlat^+}{\dlat t}\right|_{t=0} \F(K\oplus t\cdot E)
= \lim_{t\to 0^+} \frac{\F(K\oplus t\cdot E)-\F(K)}{t},
\end{equation}
provided that the limit exists (possibly equal to $\pm\infty$).
\end{definition}
The following elementary but fundamental lemma shows that the combination of $m$-homogeneity and $(1/m)$-concavity is exactly what is needed for the first variation to be well defined.

\begin{lemma}[Existence of the first variation of a functional]\label{lem:exist}
Let $(\K,\oplus,\cdot)$ be an abstract convex cone and let $m>0$. Let $K,E\in\K$, $K,E\neq\boldsymbol{0}$, and let $\F:\K\longrightarrow\R_{\geqslant 0}$ be an $m$-homogeneous functional. Assume also that $\F$ is $(1/m)$-concave on $\K_{K;E}$. Then, the first variation of $\F$, $\G_E(K)$, exists (possibly equal to $+\infty$).
\end{lemma}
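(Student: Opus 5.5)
The plan is to reduce the existence of the right derivative to monotonicity of difference quotients of a concave function of one real variable. Set
\[
\varphi(t):=\F(K\oplus t\cdot E)^{1/m}, \qquad t\geqslant 0.
\]
I would show that $\varphi$ is concave on $[0,\infty)$. Then $\varphi$ has a right derivative at $0$ in $(-\infty,+\infty]$. Finally, I would transfer this to $\F(K\oplus t\cdot E)=\varphi(t)^m$ via the chain rule, using $\varphi(0)=\F(K)^{1/m}>0$ or handling $\varphi(0)=0$ separately.

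\textbf{Concavity of $\varphi$.} Fix $0\leqslant t_1,t_2$ and $\lambda\in[0,1]$. Using the cone axioms, I would write
\[
(1-\lambda)\cdot(K\oplus t_1\cdot E)\oplus\lambda\cdot(K\oplus t_2\cdot E)=K\oplus\bigl((1-\lambda)t_1+\lambda t_2\bigr)\cdot E .
\]
This uses distributivity $r\cdot(A\oplus B)=r\cdot A\oplus r\cdot B$, $(rs)\cdot A=r\cdot(s\cdot A)$, $(r+s)\cdot A=r\cdot A\oplus s\cdot A$, commutativity and associativity of $\oplus$, and $1\cdot K=K$. Both $L=K\oplus t_1\cdot E$ and $M=K\oplus t_2\cdot E$ lie in $\K_{K;E}$ and are $\neq\boldsymbol{0}$, since $K\neq\boldsymbol{0}$ and sums of nonzero elements are nonzero. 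Applying \eqref{e: 1/m concave functional F with L,M} then gives $\varphi((1-\lambda)t_1+\lambda t_2)\geqslant(1-\lambda)\varphi(t_1)+\lambda\varphi(t_2)$, so $\varphi$ is concave.

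\textbf{Right derivative of $\varphi$.} For a concave function the quotient $(\varphi(t)-\varphi(0))/t$ is nonincreasing in $t>0$. Hence the limit as $t\to0^+$ exists and equals the supremum, a value in $(-\infty,+\infty]$; it is bounded below by the quotient at $t=1$. Moreover, by monotonicity and finiteness of these quotients, $\varphi$ is continuous from the right at $0$. Homogeneity is not needed in this step. It would enter only if one wanted to compare with $\F(E)$: for instance, $\varphi(t)\geqslant t\,\F(E)^{1/m}\cdot$const for large $t$, so $\varphi$ is nonnegative and concave on a half-line and therefore nondecreasing, giving right derivative $\geqslant 0$. I would mention this but not rely on it.

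\textbf{Passing to $\F$.} Write
\[
\frac{\F(K\oplus t\cdot E)-\F(K)}{t}=\frac{\varphi(t)^m-\varphi(0)^m}{\varphi(t)-\varphi(0)}\cdot\frac{\varphi(t)-\varphi(0)}{t},
\]
interpreting the first factor as $m\varphi(0)^{m-1}$ when $\varphi(t)=\varphi(0)$. By right continuity, the first factor tends to $m\varphi(0)^{m-1}$.

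If $\varphi(0)>0$, this limit is positive and finite, so the product has a limit in $\R\cup\{+\infty\}$. Hence $\G_E(K)=m\F(K)^{(m-1)/m}\varphi'_+(0)$, possibly $+\infty$.

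The main obstacle is the degenerate case $\F(K)=0$, since $\F$ is not assumed positive here. Then $\varphi\geqslant0$ is concave with $\varphi(0)=0$, so $\varphi(t)/t\uparrow\varphi'_+(0)\in[0,\infty]$. The quotient becomes $\varphi(t)^m/t=(\varphi(t)/t)^m\,t^{m-1}$, and I would resolve it by cases on $m$ (and on whether $\varphi'_+(0)$ is finite). For $m>1$ with $\varphi'_+(0)$ finite the limit is $0$. For $m=1$ the limit is $\varphi'_+(0)$. For $m<1$, and for $m>1$ with $\varphi'_+(0)=+\infty$, the expression is monotone in $t$, since it is a product of monotone nonnegative factors or a power of a monotone quantity, so the limit exists in $[0,+\infty]$. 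In every case the limit exists, possibly $+\infty$, as claimed.
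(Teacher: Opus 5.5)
Your concavity step is correct, and it simplifies the paper's route. Since $K\oplus t\cdot E=1\cdot K\oplus t\cdot E\in\K_{K;E}$, the cone axioms make $\varphi(t)=\F(K\oplus t\cdot E)^{1/m}$ concave on $[0,\infty)$ without using homogeneity. The paper instead works with $f(t)=\F((1-t)\cdot K\oplus t\cdot E)$ on $[0,1]$ and needs homogeneity to return to $\F(K\oplus u\cdot E)=(1+u)^m f(u/(1+u))$. The problems are all in the passage from $\varphi$ to $\varphi^m$.

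\textbf{The degenerate case is a genuine gap.} Take $\F(K)=0$, $m>1$ and $\varphi'_+(0)=+\infty$. Then $(\varphi(t)/t)^m t^{m-1}$ is a product of a factor that blows up and a factor that vanishes as $t\downarrow0$, and $\varphi(t)/t^{1/m}$ need not be monotone, so your monotonicity claim fails. In fact the statement itself is false in this case. Take $\K=\R^2_{\geqslant0}$, $K=(1,0)$, $E=(0,1)$, $t_k=2^{-2^k}$, and
\[
\phi(\theta)=\inf_{k\geqslant1}\Bigl(\sqrt{t_k}+\theta/\sqrt{t_k}\Bigr),\qquad \theta\in[0,1].
\]
This is an infimum of tangent lines to $2\sqrt\theta$. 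Hence it is concave, $\phi\geqslant2\sqrt\theta$, $\phi(0)=0$, $\phi(t_k)=2\sqrt{t_k}$, and $\phi(\sqrt{t_kt_{k+1}})\geqslant\sqrt{t_k}$. Let $\F(r,s)=\bigl((r+s)\phi(s/(r+s))\bigr)^2$. Then $\F$ is $2$-homogeneous and $(1/2)$-concave, since the perspective of a concave function is concave, exactly as in the proof of Proposition~\ref{p:counter}. Also $\F(K)=0$. With $\theta=t/(1+t)$,
\[
\frac{\F(K\oplus t\cdot E)}{t}=\frac{\phi(\theta)^2}{\theta(1-\theta)}.
\]
This tends to $4$ along $\theta=t_k$, but is $\geqslant\sqrt{t_k/t_{k+1}}\to\infty$ along $\theta=\sqrt{t_kt_{k+1}}$. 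So $\G_E(K)$ does not exist, and no argument can close this case.

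The lemma needs $\F(K)>0$; your subcases $m\leqslant1$, and $m>1$ with $\varphi'_+(0)<\infty$, are fine. This is harmless for the paper, because every use has $\F$ positive. The paper's own proof relies on it silently when it infers a right derivative of $f$ from the concavity of $f^{1/m}$.

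\textbf{A smaller, repairable slip in the main case.} Your claim that $\varphi$ is right-continuous at $0$ is false when $\varphi'_+(0)=+\infty$. Finiteness of each difference quotient does not bound them, and a concave function may jump up at an endpoint. For example, $\phi(0)=1$, $\phi\equiv2$ on $(0,1]$ and $\F=\bigl((r+s)\phi(s/(r+s))\bigr)^m$ give $\varphi(0)=1$ and $\varphi(t)=2(1+t)$ for $t>0$. So your first factor need not tend to $m\varphi(0)^{m-1}$.

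The conclusion still holds when $\varphi(0)>0$. For small $t$, $\varphi(t)$ stays in a compact subinterval of $(0,\infty)$: it is bounded above near $0$, and $\varphi(t)\geqslant\varphi(0)+t(\varphi(1)-\varphi(0))$ for $t\leqslant1$. By the mean value theorem the first factor equals $m\xi_t^{m-1}$ with $\xi_t$ in that interval, so it is bounded between positive constants. Hence the product tends to $+\infty$ whenever $\varphi'_+(0)=+\infty$. When $\varphi'_+(0)<\infty$, right-continuity does hold and your argument goes through as written.
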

\begin{proof}
Let $f:[0,1] \longrightarrow \R_{\geqslant 0}$ be the auxiliary function given by 
\[
f(t)=\F\bigl((1-t)\cdot K\oplus t\cdot E\bigr).
\] 
Using the $m$-homogeneity of $\F$, for every $t\in[0,1)$ we have
\[
f(t) = (1-t)^m \F\!\left(K\oplus \frac{t}{1-t}\cdot E\right).
\]
Writing $u = {t}/{(1-t)}$, or equivalently $t = {u}/{(1+u)}$, this identity becomes
\begin{equation}\label{e: F(K+uE) in terms of f}
\F(K\oplus u\cdot E) = (1+u)^m f\left(\frac{u}{1+u}\right)
\end{equation}
for any $u \geqslant 0$. 

Let $t_0,t_1,\lambda\in[0,1]$, and set $t_{\lambda}:=(1-\lambda)t_0+\lambda t_1$. From the $(1/m)$-concavity of $\F$ on $\K_{K;E}$ jointly with the fact that
\begin{equation}\label{e: convex comb K and E by tlambda}
(1-t_{\lambda})\cdot K\oplus t_{\lambda}\cdot E 
= (1-\lambda)\cdot\bigl((1-t_0)\cdot K\oplus t_0\cdot E\bigr)
\oplus \lambda \cdot\bigl((1-t_1)\cdot K\oplus t_1\cdot E\bigr),
\end{equation}
we find that the map $t\mapsto f(t)^{1/m}$ is concave on $[0,1]$. Hence $f$ admits a right derivative at every point $t\in[0,1]$ (cf. e.g. \cite[Theorem~1.5.4]{Sch}); in particular, the right derivative of $f$ at $t=0$ exists.

\smallskip

Thus, since the map $u\mapsto{u}/{(1+u)}$ is increasing and differentiable at $u=0$, the function
\[
u\longmapsto (1+u)^m f\!\left(\frac{u}{1+u}\right)
\]
admits a right derivative at $u=0$. Therefore, \eqref{e: F(K+uE) in terms of f} implies that the first variation $\G_E(K)$ exists.
\end{proof}

\begin{remark}\label{r: Lemma exists 1st variation true for convex}
The above result remains true if one replaces $(1/m)$-concavity by $(1/m)$-convexity, i.e., if $\F$ is $(1/m)$-convex on $\K_{K;E}$. Indeed, following the same proof, now the map $t\mapsto f(t)^{1/m}$ is convex on $[0,1]$, and so again $f$ admits a right derivative (possibly equal to $-\infty$) at every point $t\in[0,1]$ (cf. e.g. \cite[Theorem~1.5.4]{Sch}). 
\end{remark}

Once existence is established, a standard calculation reveals how the first variation inherits a homogeneity property from $\F$.
\begin{lemma}[Homogeneity of the first variation]\label{lem:homvar}
Let $(\K,\oplus,\cdot)$ be an abstract convex cone and let $m>0$. Let $\F:\K\longrightarrow\R_{\geqslant 0}$ be an $m$-homogeneous functional, and assume that $\G_E(K)$ exists for any $K,E\in\K$, $K,E\neq\boldsymbol{0}$. Then, the first variation is $(m-1)$-homogeneous, namely
\begin{equation}\label{e:homogeneity G}
\G_E(t\cdot K) = t^{m-1} \G_E(K),
\end{equation}
for every $K \in \K$ and every $t>0$. Moreover,
\begin{equation}\label{e:G(E)=mF(E)}
\G_E(E) = m\,\F(E).
\end{equation}
\end{lemma}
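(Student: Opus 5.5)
The plan is to work directly from Definition~\ref{def:firstvar}, using the axioms of the abstract convex cone together with the $m$-homogeneity of $\F$. For \eqref{e:homogeneity G}, fix $t>0$ and $K,E\in\K$. The key identity is
\[
t\cdot K\oplus s\cdot E = t\cdot\Bigl(K\oplus \tfrac{s}{t}\cdot E\Bigr), \qquad s\geqslant 0,
\]
which follows from $r\cdot(K\oplus E)=r\cdot K\oplus r\cdot E$ and $(rs)\cdot K=r\cdot(s\cdot K)$. Applying $m$-homogeneity to this identity gives
\[
\frac{\F(t\cdot K\oplus s\cdot E)-\F(t\cdot K)}{s}
= t^{m-1}\,\frac{\F\bigl(K\oplus \tfrac{s}{t}\cdot E\bigr)-\F(K)}{s/t}.
\]
Substituting $\sigma=s/t$, we have $\sigma\to0^+$ exactly when $s\to0^+$. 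By hypothesis the right-hand limit exists, possibly $\pm\infty$, and equals $t^{m-1}\G_E(K)$. Hence the left-hand limit exists and equals the same value, which is \eqref{e:homogeneity G}. Since $t^{m-1}>0$, multiplying by it is unproblematic even when the limit is infinite.

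For \eqref{e:G(E)=mF(E)}, set $K=E$. Using $(1+s)\cdot E=1\cdot E\oplus s\cdot E=E\oplus s\cdot E$ and homogeneity, we get $\F(E\oplus s\cdot E)=(1+s)^m\F(E)$. Therefore
\[
\G_E(E)=\lim_{s\to0^+}\frac{(1+s)^m-1}{s}\,\F(E)=m\,\F(E),
\]
which is just the derivative of $s\mapsto(1+s)^m$ at $s=0$. This limit exists outright, so no hypothesis is needed here.

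The argument is routine. The only point needing care is to check that each cone manipulation is justified by the listed axioms, in particular the rewriting $t\cdot K\oplus s\cdot E=t\cdot(K\oplus (s/t)\cdot E)$, which uses distributivity together with associativity of scalars. The case of infinite limits also needs a word, and it is handled by the positivity of $t^{m-1}$.
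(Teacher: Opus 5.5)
Your proposal is correct and follows essentially the same route as the paper's proof. You rewrite $t\cdot K\oplus s\cdot E=t\cdot\bigl(K\oplus \tfrac{s}{t}\cdot E\bigr)$, apply $m$-homogeneity and substitute $\sigma=s/t$ to get \eqref{e:homogeneity G}, then use $E\oplus s\cdot E=(1+s)\cdot E$ to reduce \eqref{e:G(E)=mF(E)} to the derivative of $(1+s)^m$ at $s=0$; your remarks on checking each cone identity against the axioms and on infinite limits only make explicit what the paper leaves implicit.
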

\begin{proof}
From the definition of $\G_E$ given in \eqref{e: functional G} and the $m$-homogeneity of $\F$ we have
\begin{align*}
\G_E(t\cdot K)&= \lim_{\varepsilon\to0^+}\frac{\F(t\cdot K\oplus \varepsilon\cdot E)-\F(t\cdot K)}{\varepsilon}\\&= t^m \lim_{\varepsilon\to0^+}\frac{\F(K\oplus \frac{\varepsilon}{t}\cdot E)-\F(K)}{\varepsilon}.
\\&= t^{m-1} \lim_{\delta\to0^+}\frac{\F(K\oplus \delta\cdot E)-\F(K)}{\delta}= t^{m-1}\G_E(K),
\end{align*}
which proves \eqref{e:homogeneity G}. 

Similarly, taking $K=E$, and using the $m$-homogeneity of $\F$ we obtain the second identity. Indeed,
\begin{align*}
\G_E(E) &=\lim_{t\to0^+}\frac{\F(E\oplus t\cdot E)-\F(E)}{t}=\lim_{t\to0^+}\frac{\F\bigl((1+t)\cdot E\bigr)-\F(E)}{t} \\&= \F(E) \lim_{t\to0^+}\frac{(1+t)^m-1}{t} = m\F(E).
\end{align*}
This concludes the proof.
\end{proof}
In many geometric and further applications, the cone $(\K, \oplus, \cdot)$ carries a natural partial order, and the functional $\F$ is monotone with respect to it. 
Although its first variation $\G_E$ for sure could still be $0$ or $+\infty$, under such circumstances it is automatically nonnegative. 
So, the standing assumption $0<\G_E(L)<+\infty$ in our main result, namely, hypothesis (ii) in Theorem~\ref{t: concavity general information functional}, simply excludes the trivial/degenerate cases where it vanishes or it could be infinite, as the following remark states.
\begin{remark}
Let $(\K, \oplus, \cdot)$ be an abstract convex cone endowed with a partial order $\preceq$ and let $\F:\K\longrightarrow\R_{\geqslant0}$ be a functional. If $\F$ is increasing with respect to $\preceq$, namely, $\F(L)\leqslant\F(M)$ if $L\preceq M$, and $L\preceq L\oplus t\cdot E$ for every $t\geqslant0$ and $L\in\K$, then $\G_E(L)\geqslant0$ (provided that it exists). Indeed, in this case 
\[
\F(L\oplus t\cdot E)-\F(L)\geqslant0
\] 
for all $t\geqslant0$, and so the right derivative at $t=0$, whenever it exists, of the map $t\mapsto \F(L\oplus t\cdot E)$ is nonnegative, that is, $\G_E(L)\geqslant0$.
\end{remark}

In the following remark we point out that the abstract convex cones that will be conside\-red along Section \ref{s: Applications} are in the situation described above, when they are endowed with the corresponding natural partial orders. In this regard, all the functionals we present therein are also increasing with respect to those orders. 

\begin{remark}
On the one hand, let $\K=\bigl\{L\in\K^n: L \text{ has nonempty interior }$ $\text{and contains the origin}\bigr\}\cup\bigl\{\{0\}\big\}$ (respectively, $\K=\mathcal{S}^n\cup\{0\}$), let $\preceq$ be the set inclusion $\subset$ and let $E$ contain the origin. Then $L\subset L\oplus t\cdot E$ when $\oplus$ is the $p$-sum (respectively the $p$-radial sum) for $p\geqslant 1$,  so inclu\-ding the case of Minkowski (respectively radial) addition, and $\cdot$ is the $p$-scalar multiplication \eqref{eq: p-multiplication scalar}.
Thus, when dealing with a functional $\F$ which is increasing with respect to set inclusion, its first variation is nonnegative.

\smallskip

On the other hand, let $\K=\Sym^{++}(n)\cup\{\boldsymbol{0}\}$ (here $\boldsymbol{0}$ denotes the zero matrix), let $\preceq$ be the L\"owner order $\leqslant$ and let $E$ be a positive definite matrix. Then $L\leqslant L + t E$.
Therefore, when dealing with a functional $\F$ which is increasing with respect to L\"owner order, its first variation is nonnegative.
\end{remark}

\smallskip

We are now in a position to prove Theorem \ref{t: concavity general information functional introd}, collected in a more precise way in the following result, which is the abstract principle underlying all the applications discussed in this paper. It shows that, under a mild non-degeneracy assumption on the first variation, the information functional associated with a positive $m$-homogeneous and $(1/m)$-concave functional satisfies the sharp linear Brunn-Minkowski inequality \eqref{e: concavity general information functional}. 
\begin{theorem}\label{t: concavity general information functional}
Let $(\K,\oplus,\cdot)$ be an abstract convex cone and let $m>0$. Let $K,E\in\K$, $K,E\neq\boldsymbol{0}$, and let $\F:\K\longrightarrow\R_{\geqslant 0}$ be a positive $m$-homogeneous functional satisfying:
\begin{enumerate}\itemsep6pt
\item[(i)] $\F$ is $(1/m)$-concave on $\K_{K;E}$;
\item[(ii)] its first variation $\G_E$ is positive and finite on $\K_{K;E}$.
\end{enumerate}
\smallskip

\noindent Then, for all $\lambda\in[0,1]$,
\begin{equation}\label{e: concavity general information functional}
\frac{\F\bigl((1-\lambda)\cdot K\oplus \lambda\cdot E\bigr)}{\G_E\bigl((1-\lambda)\cdot K\oplus \lambda\cdot E\bigr)}\geqslant  (1-\lambda)\,\frac{\F(K)}{\G_E(K)}+\lambda\,\frac{\F(E)}{\G_E(E)}.
\end{equation}
Equality holds for some $\lambda\in(0,1)$ if and only if 
$t\mapsto\F\bigl((1-t)\cdot K\oplus t\cdot E\bigr)^{1/m}$ is affine on $[0,\lambda]$.
\end{theorem}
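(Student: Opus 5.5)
Throughout, write $K_\lambda := (1-\lambda)\cdot K\oplus\lambda\cdot E$ and $g(t) := f(t)^{1/m}$, where $f(t)=\F\bigl((1-t)\cdot K\oplus t\cdot E\bigr)$ is the auxiliary function from the proof of Lemma~\ref{lem:exist}. By that proof, $g$ is concave on $[0,1]$, and it is positive since $\F$ is positive.

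The plan is to reduce the whole statement to one-variable concavity of $g$, by expressing the information functional $\F(K_\lambda)/\G_E(K_\lambda)$ through the right derivative of $f$ at $\lambda$.

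\textbf{Step 1: a formula for $\G_E(K_\lambda)$.} For $\lambda\in[0,1)$ and small $\varepsilon>0$, the cone rules give
\[
K_\lambda\oplus\varepsilon\cdot E=(1-\lambda)\cdot K\oplus(\lambda+\varepsilon)\cdot E=(1+\varepsilon)\cdot K_{\tau(\varepsilon)},\qquad \tau(\varepsilon)=\frac{\lambda+\varepsilon}{1+\varepsilon}.
\]
By $m$-homogeneity, $\F(K_\lambda\oplus\varepsilon\cdot E)=(1+\varepsilon)^m f(\tau(\varepsilon))$. Since $\tau'(0)=1-\lambda$, differentiating at $\varepsilon=0^+$ yields
\[
\G_E(K_\lambda)=m f(\lambda)+(1-\lambda) f'_+(\lambda)=m f(\lambda)^{(m-1)/m}\bigl(g(\lambda)+(1-\lambda)g'_+(\lambda)\bigr).
\]
Consequently,
\[
\frac{\F(K_\lambda)}{\G_E(K_\lambda)}=\frac{1}{m}\,\frac{g(\lambda)^2}{g(\lambda)+(1-\lambda)g'_+(\lambda)}.
\]
For $\lambda=1$ we use instead Lemma~\ref{lem:homvar}, which gives the value $1/m$. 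Hypothesis (ii) ensures that the denominator $D(\lambda):=g(\lambda)+(1-\lambda)g'_+(\lambda)$ is positive and finite.

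\textbf{Step 2: reduction to a concavity inequality.} Set $h(\lambda)=g(\lambda)/g(1)$; this is again concave and positive, with $h(1)=1$. The target inequality \eqref{e: concavity general information functional} becomes
\[
\frac{h(\lambda)^2}{h(\lambda)+(1-\lambda)h'_+(\lambda)}\geqslant(1-\lambda)\frac{h(0)^2}{h(0)+h'_+(0)}+\lambda .
\]
Here $D(\lambda)/g(1)=h(\lambda)+(1-\lambda)h'_+(\lambda)$, which is the value at $t=1$ of the supporting line $\ell_\lambda(t)=h(\lambda)+(t-\lambda)h'_+(\lambda)$ of $h$ at $\lambda$. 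By concavity, $\ell_\lambda\geqslant h$ on $[\lambda,1]$. Writing $a=h(0)$, $b=\ell_0(1)=a+h'_+(0)$, $c=h(\lambda)$, $d=\ell_\lambda(1)$, we need
\[
\frac{c^2}{d}\geqslant(1-\lambda)\frac{a^2}{b}+\lambda .
\]

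\textbf{Step 3: the key estimate.} Two facts follow from concavity. First, $c\geqslant(1-\lambda)a+\lambda$, by concavity between $0$ and $1$. Second, $d\leqslant$ something controlled: the slopes satisfy $h'_+(\lambda)\leqslant h'_+(0)$, and $\ell_0\geqslant h$ at $\lambda$ gives $c\leqslant a+\lambda h'_+(0)$.

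I would exploit the second fact in the combined form $c-\lambda h'_+(\lambda)\geqslant a$. This holds because the supporting line at $\lambda$ evaluated at $0$ dominates $h(0)$, i.e.\ $\ell_\lambda(0)\geqslant a$.

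Now set $s=h'_+(\lambda)$, so that $d=c+(1-\lambda)s$ and $\ell_\lambda(0)=c-\lambda s$. We have the convex-combination identity $c=(1-\lambda)\ell_\lambda(0)+\lambda\ell_\lambda(1)$. By Cauchy--Schwarz, i.e.\ convexity of $x^2/y$ on $y>0$,
\[
(1-\lambda)\frac{\ell_\lambda(0)^2}{\ell_\lambda(0)}+\lambda\frac{d^2}{d}
\]
is not the needed form directly. Instead I would aim to prove
\[
\frac{c^2}{d}\geqslant(1-\lambda)\frac{\ell_\lambda(0)^2}{\ell_\lambda(0)+s}+\lambda,
\]
and then compare the right-hand side with $\dfrac{a^2}{b}$.

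The main obstacle is making this monotone comparison rigorous, since it involves both $a\leqslant\ell_\lambda(0)$ and $b\geqslant$ the corresponding slope term. The comparison must go in the right direction, using that $x\mapsto x^2/(x+s)$ is increasing for $x>0$ together with $h'_+(0)\geqslant s$.

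The fallback, if this direct comparison fails, is to differentiate: show that $\Phi(\lambda)=F(K_\lambda)/G_E(K_\lambda)$ is concave on $[0,1]$ in the sense that $\Phi(\lambda)-\lambda$ divided by $1-\lambda$ is nondecreasing. This amounts to the inequality $\Phi(\lambda)\geqslant$ chord, verified directly by treating $h$ on $[0,\lambda]$ via its supporting lines.

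\textbf{Step 4: equality case.} Every inequality used is an equality exactly when $h$ coincides with its supporting line on $[0,\lambda]$, i.e.\ when $h$ is affine there. In that case $a=\ell_\lambda(0)$ and $h'_+(0)=s$, and all comparisons become identities. Conversely, strict concavity somewhere in $[0,\lambda]$ makes $\ell_\lambda(0)>a$ strictly, and this breaks equality. This yields the stated characterization.
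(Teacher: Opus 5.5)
Your Step~1 has an algebra slip, and everything after it depends on it. Since $f=g^m$ and, by your own computation, $\G_E(K_\lambda)=m\,g(\lambda)^{m-1}D(\lambda)$, the information functional is
\[
\frac{\F(K_\lambda)}{\G_E(K_\lambda)}=\frac{1}{m}\,\frac{g(\lambda)}{g(\lambda)+(1-\lambda)g'_+(\lambda)},
\]
not $\frac1m\, g(\lambda)^2/D(\lambda)$. Your formula would give $g(1)/m$ at $\lambda=1$ instead of the value $1/m$ from Lemma~\ref{lem:homvar}. The rescaling $h=g/g(1)$ in Step~2 hides this, but it is not an equivalence for an expression that is $1$-homogeneous in $g$.

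With the spurious square, the inequality you set up in Step~2 is false. Take affine $h(t)=a+(1-a)t$ with $a>0$, $a\neq1$; this occurs, e.g., for $\F(x)=x^m$ on $\R_{\geqslant0}$ with $K=a$, $E=1$. Both denominators equal $1$, so your target reads $\bigl((1-\lambda)a+\lambda\bigr)^2\geqslant(1-\lambda)a^2+\lambda$. This fails strictly by strict convexity of $x\mapsto x^2$, whereas the correct inequality holds with equality there. Independently of this, Step~3 never closes: it stops at an acknowledged obstacle and a fallback that only restates the goal, so the core inequality is not proved.

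Once the formula is corrected, the missing step is short, and you already have the right ingredients. The target is $c/d\geqslant(1-\lambda)\,a/b+\lambda$, which is scale invariant in $h$, so no normalization is needed. Put $s_0:=h'_+(0)$ and $s:=h'_+(\lambda)$. Since $b,d>0$ by (ii), clearing denominators gives $cb-(1-\lambda)ad-\lambda bd=(1-\lambda)\varphi$ with $\varphi=c\,s_0-a\,s-\lambda s_0 s$. Then:
\begin{itemize}
\item if $s_0\geqslant0$, write $\varphi=s_0\bigl(\ell_\lambda(0)-a\bigr)+a(s_0-s)$;
\item if $s_0<0$, write $\varphi=c(s_0-s)+(-s)\bigl(a+\lambda s_0-c\bigr)$.
\end{itemize}
In each case every factor is nonnegative by the concavity facts you listed: $\ell_\lambda(0)\geqslant a$, $c\leqslant a+\lambda s_0$, and $s\leqslant s_0$. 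This is exactly the paper's proof, with the same function $\varphi$ and the same two cases according to the sign of $F'_+(0)$. In both cases $\varphi=0$ forces $s=s_0$, so $h$ is affine on $[0,\lambda]$.

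The converse in your Step~4 is not right as stated. Affinity on $[0,\lambda]$ does not give $h'_+(\lambda)=h'_+(0)$ if $h$ has a corner at $\lambda$, and then $\varphi=c(s_0-s)>0$. For example, take $\F(x,y)=\min\{x+2y,\tfrac32(x+y)\}$ on $\R^2_{\geqslant0}$ with $K=(1,0)$, $E=(0,1)$ and $m=1$. It satisfies (i) and (ii), and $g(t)=\min\{1+t,\tfrac32\}$ is affine on $[0,\tfrac12]$. Yet at $\lambda=\tfrac12$ the two sides of the inequality equal $1$ and $\tfrac34$. The correct equality condition is $h'_+(\lambda)=h'_+(0)$. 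The ``if'' part of the statement, and the paper's one-line converse, have the same gap; it is harmless in the applications.
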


\begin{proof}
We define the positive functions $f,g:[0,1]\longrightarrow\R_{>0}$ given by
\[
f(t)=\F\bigl((1-t)\cdot K\oplus t\cdot E\bigr), \qquad
g(t)=\G_E\bigl((1-t)\cdot K\oplus t\cdot E\bigr).
\]
Now, given $t\in[0,1)$, we set
$u=u(t):=t/(1-t)$.
Since $(1-t)\cdot K\oplus t\cdot E=(1-t)\cdot\bigl(K\oplus u(t)\cdot E\bigr)$,
the homogeneity of $\F$ yields
\begin{equation}\label{e: function f}
f(t)=(1-t)^m\F_{K;E}\bigl(u(t)\bigr),
\end{equation}
where $\F_{K;E}:\R_{\geqslant0}\longrightarrow\R_{\geqslant0}$ is the function defined by $\F_{K;E}(u)=\F(K\oplus u\cdot E)$, for any $u\geqslant  0$. Moreover, from the definition of $\G_E$ (see \eqref{e: functional G}) jointly with the fact that $(u+h)\cdot E=u\cdot E\oplus h\cdot E$ for all $u,h\geqslant0$, we get
\[
(\F_{K;E})'_+(u)=\G_E(K\oplus u\cdot E)
\]
for all $u\geqslant  0$. 

So, differentiating \eqref{e: function f} with respect to $t$, and noting that $u=u(t)$ is increasing in $t$, the chain rule yields
\begin{equation}\label{e: tilde f'}
\begin{split}
f'_+(t)
&= -m(1-t)^{m-1}\F_{K;E}\bigl(u(t)\bigr) + (1-t)^m (\F_{K;E})'_{+}\bigl(u(t)\bigr) \frac{1}{(1-t)^2} \\
&= -\frac{m}{1-t}f(t) + (1-t)^{m-2}\G_E\bigl(K \oplus u(t)\cdot E\bigr)
\end{split}
\end{equation}
for all $t\in[0,1)$. Thus, from \eqref{e: tilde f'}, and taking into account that 
\begin{equation*}
g(t)=(1-t)^{m-1}\G_E\bigl(K\oplus u(t)\cdot E\bigr)
\end{equation*}
for any $t\in[0,1)$ (cf. \eqref{e:homogeneity G}), we obtain
\begin{equation}\label{e: g in terms of f}
g(t)=(1-t)f'_+(t)+m f(t)
\end{equation}
for all $t\in[0,1)$.

Now we define the positive function $h:[0,1]\longrightarrow\R_{>0}$ given by
\[
h(t)
:=
\frac{m f(t)}{ g(t)}.
\]
Hence, \eqref{e: g in terms of f} gives
\begin{equation}\label{e: f'+ in terms of h}
f'_+(t)
=
\frac{m f(t)}{1-t}
\left(
\frac{1}{h(t)}-1
\right)
\end{equation}
for all $t\in[0,1)$, and the desired inequality \eqref{e: concavity general information functional} is then equivalent to
\begin{equation}\label{e: goal for h}
h(t)\geqslant(1-t)h(0)+t h(1)
\end{equation}
for all $t\in(0,1)$. To this aim, we consider the function $F:[0,1]\longrightarrow\R_{>0}$ given by
\begin{equation}\label{e: def F(t)}
F(t)=f(t)^{1/m}.
\end{equation}
By hypothesis (cf.~also \eqref{e: convex comb K and E by tlambda}), $F$ is concave on $[0,1]$, and hence its right derivative $F'_+$ (exists and) is decreasing (cf. e.g. \cite[Theorem~1.5.4]{Sch}). Diffe\-rentiating \eqref{e: def F(t)}, and using \eqref{e: f'+ in terms of h}, we get
\[
F'_+(t)=\frac{F(t)}{1-t}\left(\frac1{h(t)}-1\right)
\]
for all $t\in[0,1)$, or equivalently,
\begin{equation}\label{e: h in terms of F}
h(t)
=
\frac{F(t)}
{F(t)+(1-t)F'_+(t)}
\end{equation}
for all $t\in[0,1)$. Note that, taking into account \eqref{e:G(E)=mF(E)}, this identity also holds for $t=1$. Therefore, showing \eqref{e: goal for h} is equivalent to proving that
\[
\frac{F(t)}
{F(t)+(1-t)F'_+(t)}
\geqslant 
(1-t)
\frac{F(0)}
{F(0)+F'_+(0)}
+t
\]
for all $t\in(0,1)$.

After rearranging terms, this inequality is equivalent to the nonnegativity of the function $\varphi:(0,1)\longrightarrow\R$ given by
\begin{equation*}
\varphi(t)
=
F(t)F'_+(0)
-
F(0)F'_+(t)
-
tF'_+(0)F'_+(t).
\end{equation*}
To show this, we then distinguish two cases:

\smallskip

\noindent
\emph{Case 1.}
Suppose $F'_+(0)\geqslant 0$.
The concavity of $F$ gives
\[
F(t)-tF'_+(t)\geqslant  F(0).
\]
Multiplying by $F'_+(0)$ and using that $F'_+$ is decreasing, we obtain
\[
\bigl(F(t)-tF'_+(t)\bigr)F'_+(0)
\geqslant 
F(0)F'_+(0)
\geqslant 
F(0)F'_+(t),
\]
which is precisely $\varphi(t)\geqslant 0$. 

\smallskip

\noindent
\emph{Case 2.}
Suppose $F'_+(0)<0$.
The concavity of $F$ implies
\[
F(t)\leqslant F(0)+tF'_+(0).
\]
Since $F'_+(t)\leqslant F'_+(0)<0$,
\[
-F'_+(t)F(t)
\leqslant
-F'_+(t)\bigl(F(0)+tF'_+(0)\bigr).
\]
Hence
\[
\varphi(t)
=
F(t)F'_+(0)
-
F'_+(t)
\bigl(F(0)+tF'_+(0)\bigr)
\geqslant 
F(t)\bigl(F'_+(0)-F'_+(t)\bigr)
\geqslant 0,
\]
because $F'_+$ is decreasing and $F(t)>0$.  

\smallskip

Finally, for the equality case, note that, in both cases, if $\varphi(t_0)=0$ for some $t_0\in(0,1)$ then $F'_+(0)=F'_+(t_0)$, which implies that $F$ is affine on $[0,t_0]$ (because $F$ is concave on $[0,1]$). Conversely, if $F$ is affine $[0,t_0]$, we trivially have that $\varphi(t_0)=0$. This concludes the proof.
\end{proof}

Regarding Theorem~\ref{t: concavity general information functional}, we point out that one may assume that $\F$ is a positive $m$-homogeneous and $(1/m')$-concave functional on $\K_{K;E}$, for given $K,E\in\K$, $K,E\neq \boldsymbol{0}$, where $m,m'>0$ are not necessarily equal. Indeed, under the same mild assumption on $\G_E$, the proof of the corresponding linear Brunn-Minkowski inequality for the associated information functional then reduces to the case covered by the above theorem:

\begin{remark}\label{r: mm'}
Let $(\K,\oplus,\cdot)$ be an abstract convex cone and let $m, m' > 0$. Let $K,E\in\K$, $K,E\neq \boldsymbol{0}$ and let $\F:\K\longrightarrow\R_{\geqslant0}$ be a positive $m$-homogeneous functional which is $(1/m')$-concave on $\K_{K;E}$, namely,
\[
\F\bigl((1-\lambda)\cdot L\oplus \lambda\cdot M\bigr)^{1/m'}
\geqslant (1-\lambda)\F(L)^{1/m'}+\lambda\F(M)^{1/m'}
\]
for any $L,M\in\K_{K;E}$ and all $\lambda\in[0,1]$.
Then, $\F$ is also $(1/m)$-concave on $\K_{K;E}$.

\smallskip

Indeed, suppose first that $1/m'>1/m$. Then, by the generalized inequality of means (which asserts that $\alpha$-means are increasing in the parameter $\alpha$), we have
\[
\bigl((1-\lambda)\F(L)^{1/m'}+\lambda\F(M)^{1/m'}\bigr)^{m'}\geqslant \bigl((1-\lambda)\F(L)^{1/m}+\lambda\F(M)^{1/m}\bigr)^m,
\]
and thus the $(1/m')$-concavity of $\F$ implies that it is $(1/m)$-concave as well.

For the case $1/m' < 1/m$, the degree of concavity can be self-improved, by using a standard argument of rescaling and homogeneity; we include briefly the argument here for the sake of completeness. Since $K,E\neq \boldsymbol{0}$, then $L,M\neq \boldsymbol{0}$ as well (see Definition~\ref{def:cone}) and so $\F(L),\F(M)>0$ because $\F$ is positive. Thus we may consider
\[
\overline{L}=\frac{1}{\mathcal{F}(L)^{1/m}}\cdot L,
\qquad
\overline{M}=\frac{1}{\mathcal{F}(M)^{1/m}}\cdot M
\]
and
\begin{equation*}
\overline{\lambda}=\frac{\lambda\,\mathcal{F}(M)^{1/m}}
{(1-\lambda)\mathcal{F}(L)^{1/m}
+\lambda\,\mathcal{F}(M)^{1/m}},
\end{equation*}
\medskip
and then by the $m$-homogeneity of $\F$, we clearly have
\[
\F\bigl(\hspace{0.1mm}\overline{L}\hspace{0.1mm}\bigr)=\F\bigl(\hspace{0.1mm}\overline{M}\hspace{0.1mm}\bigr)=
1.
\]
Hence, from the $m$-homogeneity of $\F$ jointly with the above choice of $\overline{L}$, $\overline{M}$ and $\overline{\lambda}$, jointly with the $(1/m')$-concavity of $\F$ applied to $\overline{L}$, $\overline{M}$ and $\overline{\lambda}$, we have
\[
\frac{\F\bigl((1-\lambda)\cdot L\oplus \lambda \cdot M\bigr)}{\Bigr((1-\lambda)\mathcal{F}(L)^{1/m}
+\lambda\,\mathcal{F}(M)^{1/m}\Bigr)^m}=\F\bigl((1-\overline{\lambda})\cdot \overline{L}\oplus \overline{\lambda} \cdot \overline{M}\,\bigr)^{1/m'}\geqslant 1,
\]
obtaining the desired inequality.
\end{remark}

As a consequence of the homogeneity involved, (the thesis of) Theorem \ref{t: concavity general information functional} may be equivalently formulated in a supperadditivity form, as we show next.

\begin{corollary}\label{c: equiv superadditivity and linear BM}
Under the assumptions of Theorem \ref{t: concavity general information functional}, the following asser\-tions are equivalent:
\begin{itemize}\itemsep3pt
\item[(a)]
The linear Brunn-Minkowski inequality \eqref{e: concavity general information functional} holds for all $K,E\in\K$, $K,E\neq \boldsymbol{0}$, and any $\lambda\in[0,1]$.
\item[(b)]
The superadditivity property
\begin{equation}\label{e: additive prop general information functional}
\frac{\F(K\oplus E)}{\G_E(K\oplus E)}\geqslant \frac{\F(K)}{\G_E(K)}+\frac{\F(E)}{\G_E(E)}
\end{equation}
holds for all $K,E\in\K$, $K,E\neq \boldsymbol{0}$.
\end{itemize}
\end{corollary}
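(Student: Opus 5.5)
The plan is to prove both implications directly, using only the $m$-homogeneity of $\F$ and the $(m-1)$-homogeneity of $\G_E$ from Lemma~\ref{lem:homvar}. The key observation is that the information functional $\mathcal I_E^{\F}=\F/\G_E$ is $1$-homogeneous in its argument: $\F(t\cdot L)/\G_E(t\cdot L)=t\,\F(L)/\G_E(L)$ for $t>0$. One subtlety is that $\G_E$ is taken relative to $E$. When we rescale $E$ to $\lambda\cdot E$ we must compare $\G_{\lambda\cdot E}$ with $\G_E$. From the definition \eqref{e: functional G}, substituting $s=\lambda t$ gives $\G_{\lambda\cdot E}(L)=\lambda\,\G_E(L)$. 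Hence $\F(\lambda\cdot E)/\G_{\lambda\cdot E}(\lambda\cdot E)=\lambda^m\F(E)/(\lambda^{m}\G_E(E))=\F(E)/\G_E(E)$, in agreement with \eqref{e:G(E)=mF(E)}, which gives the value $1/m$ regardless of scaling.

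\emph{(a)$\Rightarrow$(b).} Apply (a) with $\lambda=1/2$ to the pair $K,E$. This gives $\mathcal I_E^{\F}\bigl(\tfrac12\cdot K\oplus\tfrac12\cdot E\bigr)\geqslant\tfrac12\mathcal I_E^{\F}(K)+\tfrac12\mathcal I_E^{\F}(E)$. Write $\tfrac12\cdot K\oplus\tfrac12\cdot E=\tfrac12\cdot(K\oplus E)$ by the distributive rule of Definition~\ref{def:cone}. The $1$-homogeneity of $\mathcal I_E^{\F}$ turns the left side into $\tfrac12\mathcal I_E^{\F}(K\oplus E)$, and multiplying by $2$ yields \eqref{e: additive prop general information functional}.

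\emph{(b)$\Rightarrow$(a).} Fix $\lambda\in(0,1)$; the endpoints $\lambda=0,1$ are trivial. Apply (b) to the pair $K'=(1-\lambda)\cdot K$ and $E'=\lambda\cdot E$, with the first variation taken relative to $E'$. Then $\mathcal I_{E'}^{\F}(K'\oplus E')\geqslant\mathcal I_{E'}^{\F}(K')+\mathcal I_{E'}^{\F}(E')$. Now convert back using $\G_{\lambda\cdot E}=\lambda\G_E$ and homogeneity:
\[
\mathcal I_{E'}^{\F}(L)=\frac{1}{\lambda}\,\mathcal I_E^{\F}(L),\qquad \mathcal I_{E'}^{\F}(K')=\frac{1-\lambda}{\lambda}\,\mathcal I_E^{\F}(K),\qquad \mathcal I_{E'}^{\F}(E')=\mathcal I_E^{\F}(E).
\]
Multiplying the inequality by $\lambda$ does not directly give $(1-\lambda)\mathcal I(K)+\lambda\mathcal I(E)$; the last term comes out wrong. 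So a cleaner route is needed: apply (b) with the roles arranged so that the relative body is $E$ itself. Take $K''=\frac{1-\lambda}{\lambda}\cdot K$ and $E$. Then (b) gives $\mathcal I_E^{\F}(K''\oplus E)\geqslant\frac{1-\lambda}{\lambda}\mathcal I_E^{\F}(K)+\mathcal I_E^{\F}(E)$. Since $(1-\lambda)\cdot K\oplus\lambda\cdot E=\lambda\cdot(K''\oplus E)$, multiplying by $\lambda$ and using $1$-homogeneity gives exactly \eqref{e: concavity general information functional}. This works because (b) is assumed for all nonzero $K$, so in particular for $K''$.

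The only step requiring care is the bookkeeping of which body the first variation is relative to, together with verifying the $1$-homogeneity of $\mathcal I_E^{\F}$ under the standing assumption that $\G_E$ is positive and finite, so that the quotients are well defined. Both follow from Lemma~\ref{lem:homvar} and the cone axioms, so there is no real obstacle.
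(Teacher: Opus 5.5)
Your proof is correct. Once you settle on the second route for (b)$\Rightarrow$(a), it is exactly the paper's argument: $\lambda=1/2$ plus the $1$-homogeneity of $\F/\G_E$ for (a)$\Rightarrow$(b), and (b) applied to $\frac{1-\lambda}{\lambda}\cdot K$ and $E$, then rescaled by $\lambda$, for the converse. Incidentally, your discarded first attempt with $E'=\lambda\cdot E$ also works: multiplying by $\lambda$ turns $\mathcal{I}_{E'}^{\F}(E')=\mathcal{I}_E^{\F}(E)$ into the required $\lambda\,\mathcal{I}_E^{\F}(E)$, so your claim that the last term ``comes out wrong'' is just a slip.
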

\begin{proof}
Note first that from the $m$-homogeneity of $\F$ and the $(m-1)$-homogeneity of $\G_E$ given by \eqref{e:homogeneity G}, the associated information functional $\F/\G_E$ is homogeneous of degree one.

\noindent Suppose that (a) holds. Let $K,E\in\K$, $K,E\neq \boldsymbol{0}$. Hence, inequality \eqref{e: concavity general information functional} for $\lambda=1/2$, jointly with the $1$-homogeneity of $\F/\G_E$, yield
\[
\frac{\F(K\oplus E\bigr)}{\G_E(K\oplus E)}=2\,\frac{\F\left(\frac{1}{2}\cdot K\oplus \frac{1}{2}\cdot E\right)}{\G_E\left(\frac{1}{2}\cdot K\oplus \frac{1}{2}\cdot E\right)}
\geqslant \frac{\F(K)}{\G_E(K)}+\frac{\F(E)}{\G_E(E)}.
\]
Suppose that (b) holds. Let $K,E\in\K$, $K,E\neq \boldsymbol{0}$ and let $t\in(0,1)$. Then, from \eqref{e: additive prop general information functional}, jointly with the $1$-homogeneity of $\F/\G_E$, we have
\begin{equation*}
\begin{split}
\frac{\F\bigl((1-t)\cdot K\oplus t\cdot E\bigr)}{\G_E\bigl((1-t)\cdot K\oplus t\cdot E\bigr)}
=t\frac{\F\Bigl(\frac{1-t}{t}\cdot K\oplus E\Bigr)}{\G_E\Bigl(\frac{1-t}{t}\cdot K\oplus E\Bigr)}
&\geqslant t\left(\frac{\F\Bigl(\frac{1-t}{t}\cdot K\Bigr)}{\G_E\Bigl(\frac{1-t}{t}\cdot K\Bigr)}+\frac{\F(E)}{\G_E(E)}\right)\\
&=(1-t)\frac{\F(K)}{\G_E(K)}+t\frac{\F(E)}{\G_E(E)},
\end{split}
\end{equation*}
and thus \eqref{e: concavity general information functional} holds. 
\end{proof}

In \cite{FGM}, Fradelizi, Giannopoulos and Meyer showed that for the classical information functional \eqref{e: information functional}, the superadditivity property \eqref{e: DCT ineq information functional} fails in general if the Euclidean ball defining the surface area $\S(\cdot)$ is not one of the summands involved there. In other words, even in the setting of ratios of consecutive quermassintegrals (cf.~\eqref{e: GHP}), the superadditivity property \eqref{e: additive prop general information functional} fails in general when both summands differ from the element defining the relative first variation. More precisely, for $K,E,E'\in\K$, with $K,E,E'\neq \boldsymbol{0}$ and $E\neq E'$, the inequality
\[
\frac{\F(K\oplus E')}{\G_E(K\oplus E')}\geqslant \frac{\F(K)}{\G_E(K)}+\frac{\F(E')}{\G_E(E')}
\]
is in general false.
This suggests that the distinguished role played by $E$, simultaneously defining the relative first variation $\G_E$ of $\F$ and appearing as one of the summands in the linear Brunn-Minkowski inequality \eqref{e: concavity general information functional}, is an essential feature of the theory.

However, one might still try to strengthen Theorem~\ref{t: concavity general information functional} by evaluating the information functional $\F/\G_E$ at arbitrary points of the segment joining $K$ and $E$. More precisely, considering arbritrary convex combinations of $K$ and $E$, 
\[
K':=(1-t_0)\cdot K\oplus t_0\cdot E \quad\text{ and } \quad E:=(1-t_1)\cdot K\oplus t_1\cdot E,
\]
where $t_0,t_1\in[0,1]$, we wonder about the validity of 
\[
\frac{\F\bigl((1-\lambda)\cdot K'\oplus \lambda\cdot E'\bigr)}{\G_E\bigl((1-\lambda)\cdot K'\oplus \lambda\cdot E'\bigr)}
\geqslant(1-\lambda)\frac{\F(K')}{\G_E(K')}+\lambda\frac{\F(E')}{\G_E(E')}
\]
for all $\lambda\in[0,1]$. In other words, we ask
whether the linear Brunn-Minkowski inequality \eqref{e: concavity general information functional} for $\F/\G_E$, established in Theorem \ref{t: concavity general information functional}, could be in general true for $K'$ and $E'$.
This would imply that inequality \eqref{e: concavity general information functional} might be strengthened to the full concavity of the function $h:[0,1]\longrightarrow\R_{\geqslant0}$ given by
\[
h(t)=\frac{m\,\F\bigl((1-t)\cdot K\oplus t\cdot E\bigr)}{\G_E\bigl((1-t)\cdot K\oplus t\cdot E\bigr)},
\]
i.e., that
\begin{equation}\label{e:concavity of h}
h\bigl((1-\lambda)t_{0}+\lambda t_{1}\bigr) \geqslant 
(1-\lambda)h(t_{0})+\lambda h(t_{1})
\end{equation}
holds for every $t_0,t_1\in[0,1]$ (note that Theorem \ref{t: concavity general information functional} yields this inequa\-lity for $t_0=0$ and $t_1=1$). In this sense, taking $t_1=1/2$ instead of $t_1=1$ in \eqref{e: concavity general information functional} corresponds to applying the ratio $\F/\G_E$ to the convex combination $(1-t)\cdot K\oplus t\cdot E'$, where $E' = 1/2\cdot K \oplus 1/2 \cdot E$. 

The following proposition shows that this stronger form of concavity is false in general. More precisely, we will see that there exists a positive homogeneous functional $\F:\K\longrightarrow\R_{\geqslant0}$ satisfying the conditions of Theorem \ref{t: concavity general information functional}, defined on a suitable convex cone $(\K,\oplus,\cdot)$, for which \eqref{e:concavity of h} does not hold for some fixed $K,E\in\K$, and for $t_0=0$, $t_1=1/2$ and $\lambda=1/2$. Precisely, so that
\begin{equation*}
\frac{\F\Bigl(\tfrac12 \cdot K \oplus \tfrac12 \cdot\bigl(\tfrac12 \cdot K \oplus \tfrac12 \cdot E\bigr)\Bigr)}{\G_E\Bigl(\tfrac12 \cdot K \oplus \tfrac12 \cdot\bigl(\tfrac12 \cdot K \oplus \tfrac12 \cdot E\bigr)\Bigr)}
< \frac12 \frac{\F(K)}{\G_E(K)} + \frac12  \frac{\F\bigl(\tfrac12 \cdot K \oplus \tfrac12 \cdot E\bigr)}{\G_E\bigl(\tfrac12 \cdot K \oplus \tfrac12 \cdot E\bigr)}.
\end{equation*}

\begin{proposition}[Counterexample to the concavity of information functio\-nals]\label{p:counter}
There exist an abstract convex cone $(\K,\oplus,\cdot)$, elements $K,E\in\mathcal{K}$, and a positive $1$-homogeneous functional $\mathcal{F}:\mathcal{K}\longrightarrow\R_{\geqslant0}$ satisfying conditions (i) and (ii) of Theorem \ref{t: concavity general information functional} for $m=1$, such that the associated function $h:[0,1]\longrightarrow\R_{\geqslant0}$ given by
\[
h(t)=\frac{\mathcal{F}\bigl((1-t)\cdot K\oplus t\cdot E\bigr)}{\mathcal{G}_E\bigl((1-t)\cdot K\oplus t\cdot E\bigr)}
\]
is not concave on $[0,1]$.
\end{proposition}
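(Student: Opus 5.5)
The idea is to realise the counterexample in the simplest possible cone, the planar positive quadrant, with a perspective functional. I will choose the profile along the segment from $K$ to $E$ to be piecewise linear with one kink. At the kink the right derivative jumps, and this makes $h$ jump upward, which is incompatible with concavity.

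\emph{Cone and functional.} Let $\K=\R_{\geqslant0}^2$ with the usual vector addition as $\oplus$ and the usual scalar multiplication. This is an abstract convex cone in the sense of Definition~\ref{def:cone}, since the sum of two nonzero vectors of the quadrant is nonzero. Set $K=(1,0)$ and $E=(0,1)$. Let $\varphi(s)=\min\{1+s,3/2\}$ on $[0,1]$, which is positive and concave. Define
\[
\F(x,y)=(x+y)\,\varphi\!\left(\frac{y}{x+y}\right)\quad\text{for }(x,y)\neq(0,0),\qquad \F(0,0)=0.
\]
This is the perspective of a concave function, so $\F$ is $1$-homogeneous and concave on $\K$. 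Concavity gives (i) of Theorem~\ref{t: concavity general information functional} with $m=1$. Moreover $\F>0$ off the origin, since $\varphi\geqslant1$.

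\emph{First variation.} Along the segment we have $(1-t)\cdot K\oplus t\cdot E=(1-t,t)$, so $f(t)=\F(1-t,t)=\varphi(t)$. For a general $L=(x,y)\neq 0$, write $s=y/(x+y)$. A direct computation of the right derivative of $\varepsilon\mapsto\F(x,y+\varepsilon)$ gives
\[
\G_E(L)=\varphi(s)+(1-s)\varphi'_+(s).
\]
This is finite everywhere. It equals $1+s+(1-s)=2$ for $s<1/2$, and $3/2$ for $s\geqslant 1/2$. Hence it is positive on $\K_{K;E}$, and (ii) holds. The same formula is exactly identity \eqref{e: h in terms of F} from the proof of Theorem~\ref{t: concavity general information functional}, here with $m=1$.

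\emph{Failure of concavity.} Consequently
\[
h(t)=\frac{\varphi(t)}{\varphi(t)+(1-t)\varphi'_+(t)}=
\begin{cases} \dfrac{1+t}{2}, & t\in[0,1/2),\\[2mm] 1, & t\in[1/2,1].\end{cases}
\]
The left limit of $h$ at $1/2$ is $3/4$, while $h(1/2)=1$, so $h$ is discontinuous at the interior point $1/2$. A concave function on $[0,1]$ is continuous on $(0,1)$, so $h$ is not concave.

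Concretely, take $t_0=0$, $t_1=1/2$, $\lambda=1/2$ in \eqref{e:concavity of h}. Then $h(1/4)=5/8$, whereas $\tfrac12 h(0)+\tfrac12 h(1/2)=\tfrac12\cdot\tfrac12+\tfrac12\cdot 1=3/4$. This is the inequality displayed before the proposition.

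The only delicate point is the computation of $\G_E$ at the kink $s=1/2$. One has to check that the right derivative is the relevant one, and that it uses the slope $0$ to the right of the kink, which produces the upward jump. If a smooth example is preferred, I would smooth $\varphi$ near $1/2$. Since $h(1/4)=5/8<3/4$ is a strict inequality, it survives a sufficiently small smoothing, and the counterexample persists.
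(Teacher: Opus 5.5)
Your proof is correct and follows essentially the same route as the paper: the same quadrant cone with $K=(1,0)$, $E=(0,1)$, the same perspective functional $(r+s)\,\phi\bigl(s/(r+s)\bigr)$ of a positive concave profile, the same first-variation formula $\phi(u)+(1-u)\phi'(u)$, and the same test points $t_0=0$, $t_1=\lambda=1/2$. The only difference is the profile. The paper uses the smooth cubic $\phi(t)=1+3t-2t^3$, obtaining $h(1/4)=55/118<1/2=\tfrac12 h(0)+\tfrac12 h(1/2)$, which shows the failure is not an artifact of a kink; your $\min\{1+s,3/2\}$ instead produces a jump of $h$. Your optional smoothing remark is loosely justified: $h(1/2)$ depends on the smoothed slope at the kink and is not stable under smoothing, so after smoothing it is cleaner to pick $t_1$ outside the smoothing window.
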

	
\begin{proof}
Let $\K = \R_{\geqslant 0}^2$ be the first quadrant of the Euclidean plane, endowed with the standard operations
\[
(r_1,s_1)\oplus (r_2,s_2) = (r_1+r_2,\, s_1+s_2),\qquad \lambda\cdot (r,s) = (\lambda r,\, \lambda s).
\]
We take as $K$ and $E$ the vectors of the canonical basis, i.e., $
K = (1,0)\in\R^2$, $E = (0,1)\in\R^2$. Note that the set $\mathcal{K}_{K;E}$ is the whole positive quadrant (excluding the origin). We consider the auxiliary function $\phi:[0,1]\longrightarrow\R$ given by
\[
\phi(t) = 1 + 3t - 2t^3.
\]
Observe that $\phi''(t)=-12t\leqslant 0$, and so, $\phi$ is concave on $[0,1]$. Furthermore, $\phi(t)>0$ for all $t\in[0,1]$, since
the concavity of $\phi$ implies that 
\[
\phi(t)\geqslant \min\bigl\{\phi(0), \phi(1)\bigr\}>0
\] 
for all $t\in[0,1]$. 

Now we define the positive functional $\F:\K\longrightarrow\R_{\geqslant0}$ given by
\[
\mathcal{F}(r,s) := (r+s)\,\phi\!\left(\frac{s}{r+s}\right)
\]
for all $r,s\geqslant 0$ such that $r+s>0$, and $\F(0,0)=0$.
Hence, the functional $\mathcal{F}$ is $1$-homogeneous, because for $\lambda>0$ and $r,s\geqslant 0$ with $r+s>0$,
\[
\F(\lambda r,\lambda s) = (\lambda r+\lambda s)\,\phi\left(\frac{\lambda s}{\lambda r+\lambda s}\right)
= \lambda (r+s)\,\phi\!\left(\frac{s}{r+s}\right) = \lambda\mathcal{F}(r,s).
\]
Furthermore, $\F$ is concave on $\K_{K;E}$. To see this, let $(r_1,s_1),(r_2,s_2)$ be two points with $r_i,s_i \geqslant 0$, $r_i+s_i>0$, $i=1,2$, and let $\lambda\in[0,1]$. Set $\theta_i = s_i/(r_i+s_i)$, $i=1,2$, and $\bar{\lambda}=\lambda(r_2+s_2)/\bigl((1-\lambda)(r_1+s_1)+\lambda(r_2+s_2)\bigr)$. Then,
\begin{equation*}
\begin{split}
&\F\bigl((1-\lambda)(r_1,s_1)+\lambda(r_2,s_2)\bigr)\\
&= \bigl((1-\lambda)(r_1+s_1)+\lambda(r_2+s_2)\bigr)\,
		\phi\!\left(\frac{(1-\lambda)s_1+\lambda s_2}{(1-\lambda)(r_1+s_1)+\lambda(r_2+s_2)}\right)\\
&=\bigl((1-\lambda)(r_1+s_1)+\lambda(r_2+s_2)\bigr)\,\phi\bigl((1-\bar{\lambda})\theta_1+\bar{\lambda}\theta_2\bigr)\\
&\geqslant (1-\lambda)(r_1+s_1)\phi(\theta_1) + \lambda(r_2+s_2)\phi(\theta_2)\\
&= (1-\lambda)\mathcal{F}(r_1,s_1) + \lambda\mathcal{F}(r_2,s_2),
\end{split}
\end{equation*}
where the inequality follows from the concavity of $\phi$. Thus, condition (i) of Theorem \ref{t: concavity general information functional} holds.

The first variation $\G_E$ of $\F$ relative to $E=(0,1)$ is the partial derivative of $\F$ with respect to $s$. For $(r,s)\in\R_{\geqslant 0}^2$ with $r+s>0$, let $u=s/(r+s)>0$; then $\partial u/\partial s = r/(r+s)^2$ and
\[
\mathcal{G}_E(r,s)=\frac{\partial\mathcal{F}}{\partial s}(r,s)
= \phi(u)+(r+s)\phi'(u)\frac{r}{(r+s)^2}
= \phi(u)+(1-u)\phi'(u).
\]
Let $\psi:[0,1]\longrightarrow\R$ be the function given by
\[\psi(u)=\phi(u)+(1-u)\phi'(u).\] 
Using $\phi(u)=1+3u-2u^3$, and so $\phi'(u)=3-6u^2$, we get $\psi(u)=4-6u^2+4u^3$ and thus $\psi'(u)=12u(u-1)\leqslant0$ for all $u\in[0,1]$. Since $\psi(1)=2$, we have $\psi(u)>0$ for all $u\in[0,1]$ and thus $\G_E(r,s)>0$ for all $(r,s)\in\K_{K;E}=\R_{\geqslant0}\setminus\{\boldsymbol{0}\}$,  and $\psi$ is certainly finite. Thus, condition (ii) of Theorem \ref{t: concavity general information functional} holds.

Now we restrict the functionals $\F$ and $\G_E$ to the segment given by $K$ and $E$, obtaining that, for any $t\in[0,1]$,
\begin{equation*}
\begin{split}
f(t)&:=\F\bigl((1-t)\cdot K\oplus t\cdot E\bigr)
=\F(1-t,t)=\phi(t)=1+3t-2t^3,\\
g(t)&:=\G_E\bigl((1-t)\cdot K\oplus t\cdot E\bigr)=\G_E(1-t,t)=\psi(t)=4-6t^2+4t^3,
\end{split}
\end{equation*}
and then
\begin{equation*}
h(t)=\frac{m f(t)}{g(t)} = \frac{1+3t-2t^3}{4-6t^2+4t^3}.
\end{equation*}
Evaluating $h$ at $0, 1/2$ and $1/4$, respectively, we have $h(0)=1/4$, $h\bigl(1/2\bigr)=3/4$, and $h\bigl(1/4\bigr)=55/118\approx 0.4661$.
Hence
\[
h\left(\tfrac{1}{4}\right)<\tfrac{1}{2}=\tfrac{1}{2}h(0)+\tfrac{1}{2}h\left(\tfrac{1}{2}\right),
\]
and thus $h$ is not concave on $[0,1]$. 
\end{proof}

We conclude this section by establishing the natural counterpart of Theorem~\ref{t: concavity general information functional} for $(1/m)$-convex functionals. Observe that the same preliminary framework remains valid: Lemmas~\ref{lem:exist} and \ref{lem:homvar} still apply in this setting (see Remark~\ref{r: Lemma exists 1st variation true for convex}).

\begin{theorem}\label{t: convexity general information functional}
Let $(\K,\oplus,\cdot)$ be an abstract convex cone and let $m>0$. Let $K,E\in\K$, $K,E\neq\boldsymbol{0}$, and let $\F:\K\longrightarrow\R_{\geqslant 0}$ be a positive $m$-homogeneous functional satisfying:
\begin{enumerate}\itemsep6pt
\item[(i)] $\F$ is $(1/m)$-convex on $\K_{K;E}$;
\item[(ii)] its first variation $\G_E$ is positive on $\K_{K;E}$.
\end{enumerate}
\smallskip

\noindent Then, for all $\lambda\in[0,1]$,
\begin{equation*}
\frac{\F\bigl((1-\lambda)\cdot K\oplus \lambda\cdot E\bigr)}{\G_E\bigl((1-\lambda)\cdot K\oplus \lambda\cdot E\bigr)}\leqslant  (1-\lambda)\,\frac{\F(K)}{\G_E(K)}+\lambda\,\frac{\F(E)}{\G_E(E)}.
\end{equation*}
Equality holds for some $\lambda\in(0,1)$ if and only if 
$t\mapsto\F\bigl((1-t)\cdot K\oplus t\cdot E\bigr)^{1/m}$ is affine on $[0,\lambda]$.
\end{theorem}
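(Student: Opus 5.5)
The plan is to follow the proof of Theorem~\ref{t: concavity general information functional} step by step, reversing the inequalities where needed. The one new point is that some sign information which concavity supplied for free must now come from hypothesis (ii).

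\textbf{Setup.} As before, set
\[
f(t)=\F\bigl((1-t)\cdot K\oplus t\cdot E\bigr),\qquad g(t)=\G_E\bigl((1-t)\cdot K\oplus t\cdot E\bigr),\qquad h=mf/g,\qquad F=f^{1/m}.
\]
By Remark~\ref{r: Lemma exists 1st variation true for convex}, the right derivatives exist; $F$ is now convex, so $F'_+$ is increasing. The computation leading to \eqref{e: g in terms of f} and \eqref{e: h in terms of F} uses only homogeneity and the chain rule, never concavity. So it carries over verbatim and gives
\[
g(t)=(1-t)f'_+(t)+mf(t),\qquad h(t)=\frac{F(t)}{F(t)+(1-t)F'_+(t)},
\]
with $h(1)=1$ by \eqref{e:G(E)=mF(E)}.

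\textbf{Positivity and reduction.} Hypothesis (ii) says $g>0$ on $[0,1]$. This means $F(t)+(1-t)F'_+(t)>0$ for every $t$, so each denominator is positive and finite; in particular $F'_+(0)>-\infty$. The only possible infinite derivative for a convex function is a $-\infty$ at the left endpoint, which is therefore excluded. Multiplying through by the two positive denominators $F(t)+(1-t)F'_+(t)$ and $F(0)+F'_+(0)$, the desired inequality $h(t)\leqslant(1-t)h(0)+t$ becomes equivalent to $\varphi(t)\leqslant 0$, where
\[
\varphi(t)=F(t)F'_+(0)-F(0)F'_+(t)-tF'_+(0)F'_+(t)
\]
is the same function as in the concave case.

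\textbf{Case $F'_+(0)\geqslant 0$.} Convexity gives $F(0)\geqslant F(t)-tF'_-(t)\geqslant F(t)-tF'_+(t)$. Multiplying by $F'_+(0)\geqslant 0$ and using $F'_+(0)\leqslant F'_+(t)$ together with $F(0)>0$, we get
\[
\bigl(F(t)-tF'_+(t)\bigr)F'_+(0)\leqslant F(0)F'_+(0)\leqslant F(0)F'_+(t),
\]
which is exactly $\varphi(t)\leqslant 0$.

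\textbf{Case $F'_+(0)<0$.} This is the main obstacle, because the mirror argument needs the sign of $F'_+(t)$, which convexity no longer controls.
\begin{itemize}
\item If $F'_+(t)\leqslant 0$: convexity gives $F(t)\geqslant F(0)+tF'_+(0)$. Multiplying by $-F'_+(t)\geqslant 0$ yields
\[
\varphi(t)\leqslant F(t)\bigl(F'_+(0)-F'_+(t)\bigr)\leqslant 0,
\]
since $F'_+$ is increasing.
\item If $F'_+(t)>0$: here I use the positivity of $g(0)$, i.e.\ $F(0)+F'_+(0)>0$. Since $F'_+(0)<0$ and $t\leqslant 1$, this gives $F(0)+tF'_+(0)\geqslant F(0)+F'_+(0)>0$. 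Hence both terms of
\[
\varphi(t)=F(t)F'_+(0)-F'_+(t)\bigl(F(0)+tF'_+(0)\bigr)
\]
are strictly negative, and $\varphi(t)<0$.
\end{itemize}

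\textbf{Equality.} In the subcase $F'_+(0)<0<F'_+(t_0)$ the inequality is strict, so equality cannot occur there. In every other case, the chains above force $F'_+(0)=F'_+(t_0)$. Since $F'_+$ is monotone, this means $F'_+$ is constant on $[0,t_0]$, i.e.\ $F$ is affine on $[0,t_0]$. Conversely, if $F$ is affine on $[0,t_0]$, then $\varphi(t_0)=0$ by direct substitution, exactly as in the concave case.
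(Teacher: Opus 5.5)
Your proof of the inequality, and of the ``only if'' half of the equality clause, is correct and is essentially the paper's proof. You use the same $h$, the same reduction to $\varphi(t)\leqslant 0$ and the same Case~1. You are also more explicit than the paper in noting that (ii) makes both denominators positive and rules out $F'_+(0)=-\infty$. The only difference is in Case~2. You split on the sign of $F'_+(t)$. The paper instead multiplies $F(t)\geqslant F(0)+tF'_+(0)$ by $F'_+(0)<0$, which gives $\varphi(t)\leqslant \bigl(F(0)+tF'_+(0)\bigr)\bigl(F'_+(0)-F'_+(t)\bigr)$, and then uses $F(0)+F'_+(0)=g(0)/\bigl(mF(0)^{m-1}\bigr)>0$. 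The two routes are equivalent.

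The step that fails is the converse: ``if $F$ is affine on $[0,t_0]$, then $\varphi(t_0)=0$ by direct substitution.'' If $F(t)=F(0)+ct$ on $[0,t_0]$, substitution gives $\varphi(t_0)=\bigl(F(0)+t_0c\bigr)\bigl(c-F'_+(t_0)\bigr)$. Here $F'_+(t_0)$ is a right derivative, so it is not determined by the restriction of $F$ to $[0,t_0]$. A kink of $F$ at $t_0$ therefore gives $\varphi(t_0)<0$.

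This happens under all the hypotheses. Take the cone $\R^2_{\geqslant0}$ of Proposition~\ref{p:counter} with $m=1$, $K=(1,0)$, $E=(0,1)$, and let $\F(r,s)=(r+s)\phi\bigl(s/(r+s)\bigr)$ with $\phi(t)=1+\max\{0,t-1/2\}$.
\begin{itemize}
\item This $\F$ is positive, $1$-homogeneous and convex; the last follows from the computation in Proposition~\ref{p:counter} with the inequality reversed.
\item Its first variation is $\G_E(r,s)=\phi(u)+(1-u)\phi'_+(u)\geqslant 1$, where $u=s/(r+s)$.
\item Here $F=\phi$ is constant on $[0,1/2]$.
\item Yet for $\lambda=1/2$ the left-hand side is $\phi(1/2)/\G_E(\frac12,\frac12)=1/(3/2)=2/3$, while the right-hand side is $\frac12\cdot 1+\frac12\cdot 1=1$.
\end{itemize}

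What your chains actually prove is that equality at $\lambda$ holds if and only if $F'_+(0)=F'_+(\lambda)$. Equivalently, $F$ is affine on $[0,\lambda]$ and $F'_+(\lambda)$ equals its slope. The paper's proof makes the same ``trivially'' claim, so the defect lies in the equality clause of the statement itself. It does not affect the applications in Section~\ref{s: Applications}, which use only the ``only if'' direction and check the converse directly for balls.
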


\begin{proof}
The initial part of the proof is identical to that of Theorem \ref{t: concavity general information functional}, and so we omit these steps for the sake of brevity. However, throughout the rest of the 
proof, we retain the notation introduced there, namely,
\[
f(t)=\F\bigl((1-t)\cdot K\oplus t\cdot E\bigr), \quad
g(t)=\G_E\bigl((1-t)\cdot K\oplus t\cdot E\bigr), \quad 
F(t)=f(t)^{1/m},
\]
for $t\in[0,1]$. Thus, using \eqref{e: h in terms of F}, it suffices to show that
\[
\frac{F(t)}{F(t)+(1-t)F'_+(t)}\leqslant (1-t)\frac{F(0)}{F(0)+F'_+(0)}+t
\]
for all $t\in(0,1)$. 

After rearranging terms, this inequality is equivalent to the nonpositivity of the function $\varphi:(0,1)\longrightarrow\R$ given by
\begin{equation}\label{e: varphi convex}
\varphi(t)=F(t)F'_+(0)-F(0)F'_+(t)-tF'_+(0)F'_+(t).
\end{equation}
To show this, we distinguish two cases:

\noindent
\emph{Case 1.}
Suppose $F'_+(0)\geqslant 0$. The convexity of $F$ gives 
\[
F(t)-tF'_+(t)\leqslant F(0).
\]
Multiplying by $F'_+(0)\geqslant 0$ and using that $F'_+$ is increasing, we obtain
\[
\bigl(F(t)-tF'_+(t)\bigr)F'_+(0)
\leqslant 
F(0)F'_+(0)
\leqslant 
F(0)F'_+(t),
\]
which is precisely $\varphi(t)\leqslant 0$. 

\smallskip
\noindent
\emph{Case 2.}
Suppose $F'_+(0)<0$.
The convexity of $F$ implies
\[
F(t)\geqslant F(0)+tF'_+(0).
\]
Multiplying by the negative number $F'_+(0)$, we get
\[
F(t)F'_+(0)\leqslant \bigl(F(0)+tF'_+(0)\bigr)F'_+(0).
\]
Then, we have
\begin{equation*}
\varphi(t)
\leqslant \bigl(F(0)+tF'_+(0)\bigr)\bigl(F'_+(0)-F'_+(t)\bigr).
\end{equation*}
Since $t<1$, $F'_+(0)<0$ and $F'_+$ is increasing, the latter inequality yields
\begin{equation}\label{e: bound varphi case 2 convex result}
\varphi(t)\leqslant \bigl(F(0)+F'_+(0)\bigr)\bigl(F'_+(0)-F'_+(t)\bigr).
\end{equation}
Now, from the relation $f(t)=F(t)^m$ we get $f_+'(t)=mF(t)^{m-1}F_+'(t)$ and then, from \eqref{e: g in terms of f}, 
\[
g(t)=(1-t)f_+'(t)+mf(t)=mF(t)^{m-1}\bigl((1-t)F'_+(t)+F(t)\bigr).\] 
Thus, we have $0<g(0)= mF(0)^{m-1} \bigl(F(0)+F_+'(0)\bigr)$. 
This implies that $F(0)+F'_+(0)>0$, and so 
\[
\bigl(F(0)+F'_+(0)\bigr)\bigl(F'_+(0)-F'_+(t)\bigr)\leqslant0.
\]
Hence, using \eqref{e: bound varphi case 2 convex result}, we obtain $\varphi(t)\leqslant 0$, as desired.

\smallskip

Finally, for the equality case, note that, in both cases, if $\varphi(t_0)=0$ for some $t_0 \in (0,1)$ then $F'_+(0)=F'_+(t_0)$, which implies that $F$ is affine on $[0,t_0]$ (because $F$ is convex on $[0,1]$). Conversely, if $F$ is affine on $[0,t_0]$, we trivially have that $\varphi(t_0)=0$. This concludes the proof.
\end{proof}

\section{The non-homogeneous case: a counterexample for the Gaussian measure}\label{s: counterexample}
The homogeneity assumption in Theorem \ref{t: concavity general information functional} is essential. In this section we show that the conclusion of this result may fail even for functionals that satisfy a $(1/n)$-powered Brunn-Minkowski inequality, possess a well-defined first variation, and enjoy a natural sub-homogeneity property of the degree $n$, which is the inverse of the corresponding Brunn-Minkowski exponent.

To this end, we consider the standard Gaussian measure on $\R^n$. Although this functional enjoys a $(1/n)$-powered Brunn-Minkowski inequality under appro\-priate symmetry assumptions, and admits a natural first variation given by the Gaussian surface area, it is not homogeneous. In fact, it is strictly sub-homogeneous as a consequence of the strict radial monotonicity of its density. We shall show that this lack of homogeneity alone is enough to destroy the linear Brunn-Minkowski inequality of the associated information functional, even when restricted to the family of centred Euclidean balls.

We recall that the standard Gaussian measure $\gamma_n$ on $\R^n$ is given by
\begin{equation*}\label{e:def_gamma}
\mathrm{d}\gamma_n(x)=\frac{1}{(2\pi)^{n/2}}\,\e^{-\|x\|^2/2}\,\dlat x .
\end{equation*}
For a closed convex set $K\subset\R^n$, we define its Gaussian surface area (the Minkowski content, or first variation with respect to $B_n$) as
\begin{equation*}\label{e:def S gamma}
\S^{\gamma_n}(K):=\lim_{\varepsilon\to 0^+}\frac{\gamma_n(K+\varepsilon B_n)-\gamma_n(K)}{\varepsilon},
\end{equation*}
whenever this limit exists. 

In the language of the previous section, the standard Gaussian measure $\gamma_n(\cdot)$ is the functional $\F$, $\S^{\gamma_n}(\cdot)$ corresponds precisely to the first variation $\G_{B_n}$, defined in \eqref{e: functional G}, where the abstract convex cone $(\K,\oplus,\cdot)$ is taken to be the family of all $0$-symmetric closed convex sets with nonempty interior with the classical Minkowski addition and the usual scalar multiplication.

Let us then point out which Brunn-Minkowski inequality should be considered in this respect for $\gamma_n(\cdot)$. To this aim, we note that under $0$-symmetry the Gaussian measure is indeed $(1/n)$-concave, due to the fo\-llo\-wing celebrated result by Eskenazis and Mos\-chi\-dis \cite{EM}.
\begin{thm}\label{t: EskMosch Gaussian BM}
Let $K, E \subset \R^n$ be $0$-symmetric closed convex sets with non\-empty interior.  Then, for all $\lambda \in (0,1)$,
\begin{equation*}
\gamma_n\bigl((1-\lambda) K+ \lambda E\bigr)^{1/n} \geqslant (1-\lambda )\, \gamma_n(K)^{1/n} + \lambda\, \gamma_n(E)^{1/n}.
\end{equation*}
Equality, for some $\lambda \in (0,1)$, holds if and only if $K=E$.
\end{thm}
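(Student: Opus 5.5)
The plan is to follow the local-to-global strategy of Kolesnikov and Milman, which is also the route taken by Eskenazis and Moschidis. The idea is to reduce the $(1/n)$-concavity of $\gamma_n$ along Minkowski interpolations to an infinitesimal inequality on a single body.

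\emph{Reduction to smooth bodies.} First, by approximation and continuity of $\gamma_n$ in the Hausdorff metric, we may assume that $K$ and $E$ are $0$-symmetric with $C^2_+$ boundaries. Set $K_t=(1-t)K+tE$, whose support function $h_t=(1-t)h_K+th_E$ is affine in $t$. It then suffices to show that $t\mapsto\gamma_n(K_t)^{1/n}$ is concave on $[0,1]$. Since every $K_t$ is again a smooth symmetric body and the interpolation is affine, this amounts to checking
\[
\frac{\dlat^2}{\dlat s^2}\Big|_{s=0}\gamma_n(L+s\cdot\text{perturbation})\leqslant\Bigl(1-\tfrac1n\Bigr)\frac{\bigl(\frac{\dlat}{\dlat s}\big|_{s=0}\gamma_n(L+s\cdot\text{perturbation})\bigr)^2}{\gamma_n(L)}
\]
for every smooth symmetric body $L$. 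Here the perturbation is an even function $f=\dot h$ on $\s^{n-1}$, transported to $\partial L$ through the Gauss map.

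\emph{Second variation formula.} Next I would compute both derivatives of $\gamma_n(L_s)$ with $h_{L_s}=h_L+sf$. The first variation is $\int_{\partial L} f\,\dlat\gamma_{n}^{\partial}$. The second variation involves $\int_{\partial L}\bigl(H_\gamma f^2-\langle \mathrm{II}^{-1}\nabla_{\partial}f,\nabla_{\partial}f\rangle\bigr)\dlat\gamma_n^{\partial}$, where $H_\gamma=H-\langle x,\nu\rangle$ is the Gaussian mean curvature. The required local inequality thus becomes a weighted Poincar\'e-type inequality on $\partial L$ for even $f$.

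\emph{Dual $L^2$ method.} To prove this boundary inequality, I would solve the Neumann problem $\mathcal L u=\Delta u-\langle x,\nabla u\rangle=c$ in $L$, with $\partial_\nu u=f$ on $\partial L$ and $c=\int_{\partial L}f\,\dlat\gamma_n^\partial/\gamma_n(L)$. The solution $u$ is even because $L$ and $f$ are. Applying the Gaussian Reilly formula gives
\[
\int_L(\mathcal L u)^2\dlat\gamma_n=\int_L\bigl(\|\nabla^2u\|^2+|\nabla u|^2\bigr)\dlat\gamma_n+\int_{\partial L}\bigl(H_\gamma f^2+\langle\mathrm{II}\nabla_\partial u,\nabla_\partial u\rangle-2\langle\nabla_\partial f,\nabla_\partial u\rangle\bigr)\dlat\gamma_n^\partial .
\]
Completing the square on the boundary produces the $\mathrm{II}^{-1}$ term. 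The local inequality then reduces to the interior estimate
\[
\int_L\bigl(\|\nabla^2u\|^2+|\nabla u|^2\bigr)\dlat\gamma_n\geqslant\frac1n\frac{\bigl(\int_L\mathcal L u\,\dlat\gamma_n\bigr)^2}{\gamma_n(L)} .
\]

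\emph{Main obstacle.} I expect this last interior estimate to be the main difficulty. For the volume one simply uses $\|\nabla^2u\|^2\geqslant(\Delta u)^2/n$ with $\Delta u$ constant. Here, however, $\mathcal L u$ is constant but $\Delta u$ is not, and the drift term $\langle x,\nabla u\rangle$ has to be absorbed. My first attempt would be to write $\Delta u=c+\langle x,\nabla u\rangle$, use $\|\nabla^2 u\|^2\geqslant(\Delta u)^2/n$, and control the cross term $\int\langle x,\nabla u\rangle$ using evenness of $u$. Evenness forces $\nabla u$ to be odd, so $\nabla u$ is orthogonal to constants in $L^2(\gamma_n|_L)$; a Gaussian Poincar\'e/Brascamp--Lieb inequality restricted to odd functions on symmetric convex sets then gives the improved spectral gap needed to dominate $\int\langle x,\nabla u\rangle^2$ by $\int(\|\nabla^2u\|^2+|\nabla u|^2)$. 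This is the step where symmetry is genuinely used, and I would look for the sharp constant there.

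\emph{Equality case.} Equality forces all of these inequalities to be tight. In particular $\nabla^2u$ must be a multiple of the identity and the odd-function spectral estimate must be extremal, which forces $\nabla u\equiv0$ and hence $f\equiv0$, i.e. $\dot h=0$. Integrating along the interpolation, equality for some $\lambda\in(0,1)$ then gives $h_K=h_E$, that is, $K=E$.
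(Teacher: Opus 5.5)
The paper gives no proof of this statement. It quotes it from Eskenazis and Moschidis \cite{EM}, who worked inside the local approach of Kolesnikov and Livshyts \cite{KL}. Your reduction (smoothing, second variation along $h_t$, the Neumann problem $\mathcal L u=c$, the Gaussian Reilly formula, completing the square) is that framework and is fine as far as it goes. However, it only brings you to the interior estimate $\int_L(\|\nabla^2u\|^2+|\nabla u|^2)\,\dlat\gamma_n\geqslant c^2\gamma_n(L)/n$. That estimate is where the sharp exponent is decided (compare the exponent $1/(2n)$ obtained in \cite{KL} within the same framework), and the mechanism you sketch for it does not work. After $\|\nabla^2u\|^2\geqslant(\Delta u)^2/n$ with $\Delta u=c+\langle x,\nabla u\rangle$, you must show
\[
\frac{2c}{n}\int_L\langle x,\nabla u\rangle\,\dlat\gamma_n+\frac1n\int_L\langle x,\nabla u\rangle^2\,\dlat\gamma_n+\int_L|\nabla u|^2\,\dlat\gamma_n\geqslant 0 .
\]
The quadratic term already has the good sign, so dominating it from above is beside the point. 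The obstruction is the linear term, and evenness does nothing to it: $x$ and $\nabla u$ are both odd, so $\langle x,\nabla u\rangle$ is even. In fact, Green's formula with $\mathcal L(|x|^2/2)=n-|x|^2$ gives
\[
\int_L\langle x,\nabla u\rangle\,\dlat\gamma_n=\frac12\int_{\partial L}|x|^2f\,\dlat\gamma_n^{\partial}-\frac c2\int_L|x|^2\,\dlat\gamma_n .
\]
This is negative, with $c>0$, for instance when $L$ is a long thin symmetric ellipse and $f\geqslant 0$ is an even bump concentrated near the endpoints of its short axis.

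The improved Poincar\'e inequality for odd functions that you want to invoke is also not available. The coordinate functions $x_i$ are odd, and on large centred balls they show that the Brascamp--Lieb constant $1$ cannot be improved uniformly over symmetric convex sets. The improvement that does hold there (spectral gap $2$, used by Cordero-Erausquin, Fradelizi and Maurey for the B-theorem) concerns even functions. Even granting an absorption bound $\int_L\langle x,\nabla u\rangle^2\,\dlat\gamma_n\leqslant MX$, where $X$ is the left-hand side of your interior estimate, Cauchy--Schwarz on the linear term only gives $X\geqslant c^2\gamma_n(L)/(\sqrt n+\sqrt M)^2$. That is strictly below the required $c^2\gamma_n(L)/n$, i.e.\ a non-sharp exponent. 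So the decisive step is missing, and filling it needs a genuinely new estimate; this is precisely the contribution of \cite{EM}.

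Your equality argument inherits this gap and has a second one. Equality cases do not survive the smoothing step, so for nonsmooth (or unbounded) $K,E$ the second-variation analysis along $K_t$ is not available. A separate argument is needed to conclude $K=E$.
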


This result was originally conjectured by Gardner and Zvavitch in \cite{GZ}, and since then many results have been obtained in this line. Firstly, Gardner and Zvavitch proved such a Gaussian Brunn-Minkowski inequality for special families of sets, being some of these results later extended by Marsiglietti \cite{Ma} to the case of more general measures. Subsequently, Colesanti, Livshyts and Marsiglietti \cite{CLM} proved the same inequality when both convex bodies are small perturbations of the Euclidean ball. Later, Livshyts, Marsiglietti, Nayar and Zvavitch \cite{LMNZ} showed that the Brunn-Minkowski inequality holds for unconditional product measures with decreasing density and pairs of unconditional sets. This result was later gene\-ralized to weakly unconditional sets by Ritoré and the third-named author \cite{RY}. 
Moreover, Böröczky and Kalantzopoulos \cite{BK} extended the $(1/n)$-powered Brunn-Minkowski ine\-quality to convex bodies with symmetries with respect to $n$ independent hyperplanes.

In \cite{KL}, Kolesnikov and Livshyts followed a different approach and proved that the above inequality holds, with exponent $1/(2n)$ instead of $1/n$, when $K$ and $E$ are $0$-symmetric closed convex sets. Finally, by exploiting this latter approach, Eskenazis and Moschidis \cite{EM} obtained their celebrated result, Theorem \ref{t: EskMosch Gaussian BM}. 
We refer to those works and the references therein for the precise statements and further developments.

Altogether, although the standard Gaussian measure satisfies the $(1/n)$-powered Brunn-Minkowski inequality, under various symme\-try conditions (as discussed previously), it is not $n$-homogeneous but sub-homoge\-neous (of degree $n$), that is, for any measurable set $A\subset\R^n$ and any $s\geqslant1$,
\begin{equation}\label{e:subhom_ineq}
\gamma_n(sA) \leqslant s^n\,\gamma_n(A).
\end{equation}
Equality holds in each inequality if and only if $s = 1$ or $\gamma_n(A) = 0$.
To see this, recall that a function $f:\R^n\longrightarrow\R_{\geqslant0}$ is called \emph{radially decreasing} if $f(tx)\leqslant f(x)$ for all $x\in\R^n$ and all $t\geqslant 1$. Furthermore, it is {strictly radially decreasing} if the above inequality holds strict whenever $t>1$. So, the density 
\[
f_n(x)=(2\pi)^{-n/2}\e^{-\|x\|^2/2}
\]
is clearly strictly radially decreasing. Hence, doing the change of variables given by $x=sy$, we get
\begin{align*}
\gamma_n(sA)
&= \int_{sA} f_n(x)\,\mathrm{d}x= \int_{A} s^{n} f_n(sy)\,\mathrm{d}y\leqslant \int_{A} s^{n} f_n(y)\,\mathrm{d}y= s^{n}\gamma_n(A),
\end{align*}
which yields \eqref{e:subhom_ineq}. Moreover, the equality case in \eqref{e:subhom_ineq} follows from the strict radial decrease.

This sub-homogeneity property has been useful in obtaining Gaussian Brunn-Minkowski type inequalities, as in the work \cite{GY} by the first and third-named authors (see also the references therein). It is therefore natural to ask whether Theorem \ref{t: concavity general information functional} remains valid if one replaces the homogeneity assumption by sub-homogeneity. However, the following result shows that the ans\-wer is, in general, negative, even when dealing with centred Euclidean balls.

\begin{theorem}\label{t: counterexample gaussian information functional ineq}
Let $n\geqslant1$. Then, for every $a>0$ with $a\neq 1$, the inequality
\begin{equation}\label{e:objetivo}
\frac{\gamma_n\bigl((1-t)aB_n + tB_n\bigr)}
{\S^{\gamma_n}\bigl((1-t)aB_n + tB_n\bigr)}
\geqslant
(1-t)\,\frac{\gamma_n(aB_n)}{\S^{\gamma_n}(aB_n)}
+ t\,\frac{\gamma_n(B_n)}{\S^{\gamma_n}(B_n)}
\end{equation}
does not hold for any $t\in(0,1)$. In fact, the reverse strict inequality holds.
\end{theorem}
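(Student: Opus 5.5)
The plan is to reduce everything to a one-variable function of the radius and then prove that this function is strictly convex. For $r>0$ we have $(1-t)aB_n+tB_n=r(t)B_n$ with $r(t):=(1-t)a+t$, which is affine in $t$ and satisfies $r(0)=a\neq1=r(1)$. So both sides of \eqref{e:objetivo} involve only the function
\[
\Phi(r):=\frac{\gamma_n(rB_n)}{\S^{\gamma_n}(rB_n)},\qquad r>0 .
\]
Inequality \eqref{e:objetivo} then reads $\Phi(r(t))\geqslant(1-t)\Phi(r(0))+t\Phi(r(1))$. Consequently, if $\Phi$ is \emph{strictly convex} on $(0,\infty)$, the reverse strict inequality holds for every $t\in(0,1)$, because $r(t)$ traces a nondegenerate segment. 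This is exactly the claim.

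The first step is to compute $\Phi$ explicitly. In polar coordinates,
\[
\gamma_n(rB_n)=c_n\int_0^r s^{n-1}\e^{-s^2/2}\,\dlat s, \qquad c_n=n\kappa_n(2\pi)^{-n/2}.
\]
Differentiating with respect to $\varepsilon$ in $\gamma_n((r+\varepsilon)B_n)$, since $rB_n+\varepsilon B_n=(r+\varepsilon)B_n$, gives
\[
\S^{\gamma_n}(rB_n)=c_n r^{n-1}\e^{-r^2/2}.
\]
For $n=1$ the constant is $c_1=2(2\pi)^{-1/2}$, and the same formulas hold. Hence
\[
\Phi(r)=r^{1-n}\e^{r^2/2}\int_0^r s^{n-1}\e^{-s^2/2}\,\dlat s .
\]
Substituting $s=ru$ turns this into
\[
\Phi(r)=\int_0^1 u^{n-1}\, r\,\e^{r^2(1-u^2)/2}\,\dlat u .
\]

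The key step, which is really the only point needing an idea, is to prove that $\Phi$ is strictly convex. In the representation above, $\Phi$ is a positive mixture of the functions $\psi_c(r)=r\e^{cr^2}$ with $c=(1-u^2)/2\geqslant0$. A direct computation gives
\[
\psi_c'(r)=\e^{cr^2}(1+2cr^2),\qquad \psi_c''(r)=2cr\,\e^{cr^2}(3+2cr^2),
\]
and $\psi_c''(r)>0$ for $r>0$ whenever $c>0$, that is, for every $u\in[0,1)$. Differentiating under the integral sign is justified by smoothness on the compact $u$-interval. It yields
\[
\Phi''(r)=\int_0^1u^{n-1}\psi''_{(1-u^2)/2}(r)\,\dlat u>0
\]
for all $r>0$, so $\Phi$ is strictly convex on $(0,\infty)$.

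Finally, since $t\mapsto r(t)$ is affine and nonconstant, $t\mapsto\Phi(r(t))$ is strictly convex on $[0,1]$. Therefore
\[
\Phi(r(t))<(1-t)\Phi(a)+t\Phi(1)
\]
for every $t\in(0,1)$, which is the strict reverse of \eqref{e:objetivo}. The main obstacle is the convexity of $\Phi$: attacking $\Phi''$ directly via the ODE $\Phi'=(r-(n-1)/r)\Phi+1$ gives a messy sign problem. The rescaling $s=ru$, which shows $\Phi$ is an average of the elementary convex functions $r\e^{cr^2}$, is what makes the argument clean and valid uniformly in $n\geqslant1$.
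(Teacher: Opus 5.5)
Your proof is correct and follows essentially the same route as the paper. You reduce to strict convexity of $r\mapsto\gamma_n(rB_n)/\S^{\gamma_n}(rB_n)$, rescale to $\int_0^1 r\,\e^{r^2(1-u^2)/2}u^{n-1}\,\dlat u$, and differentiate twice under the integral sign, which gives exactly the paper's positive integrand. The only difference is that you handle $n=1$ within the same integral formula, whereas the paper treats it separately via the identity $H_1''(r)=(1+r^2)H_1(r)+r$.
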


\begin{proof}
Define the nonnegative function $H_n:\R_{>0}\longrightarrow\R_{>0}$ given by
\begin{equation*}
H_n(r) := \frac{\gamma_n(rB_n)}{\S^{\gamma_n}(rB_n)}.
\end{equation*}
It suffices to show that $H_n''(r)>0$ for all $r>0$, namely, that $H_n$ is strictly convex. 
Indeed, fix $a>0$ with $a\neq 1$ and set $K = aB_n$.  Since $(1-t)K + tB_n = \bigl((1-t)a + t\bigr)B_n$, the left‑hand side of the desired inequality \eqref{e:objetivo} is exactly $H_n\bigl((1-t)a + t\bigr)$. 
Thus \eqref{e:objetivo} would read
\begin{equation}\label{e:concavity_ineq}
H_n\bigl((1-t)a + t\bigr) \geqslant (1-t)H_n(a) + t H_n(1)
\end{equation}
for all $t\in(0,1)$. However, since $a\neq1$, the strict convexity of $H_n$ yields, for $t  \in (0,1)$, the opposite strict inequality
\[
H_n\bigl((1-t)a + t\bigr) < (1-t)H_n(a) + t H_n(1).
\]
Hence \eqref{e:concavity_ineq} fails, and so \eqref{e:objetivo} cannot hold. 

\smallskip

If $n=1$, then $rB_1=[-r,r]$, and so we have
\begin{equation*}
\gamma_1\bigl([-r,r]\bigr)=2(2\pi)^{-1/2}\int_0^r e^{-t^2/2}\,\dlat t. 
\end{equation*} 
Then, differentiation with respect to $\varepsilon$ of $\gamma_1\bigl([-r-\varepsilon,r+\varepsilon])$, at $\varepsilon=0$, gives
$\S^{\gamma_1}\bigl([-r,r]\bigr) = 2 (2\pi)^{-1/2} \e^{-r^2/2}$.
Hence, the corresponding ratio becomes
\begin{equation*}
    H_1(r)=\e^{r^2/2}\int_0^r \e^{-t^2/2}\,\dlat t,
\end{equation*}
for which we have $H_1''(r)= (1+r^2)H_1(r)+r>0$ for all $r>0$, yielding strict convexity.

\smallskip

Suppose that $n\geqslant 2$. Using polar coordinates, we have that
\begin{equation*}
\gamma_n(rB_n) = \frac{1}{(2\pi)^{n/2}}\, n\kappa_n \int_0^r \e^{-t^2/2} t^{n-1}\,\dlat t.
\end{equation*}
Since $rB_n + \varepsilon B_n = (r+\varepsilon)B_n$
differentiation with respect to $\varepsilon$ at $\varepsilon=0$ yields
\begin{equation*}
\S^{\gamma_n}(rB_n) = \frac{n\kappa_n}{(2\pi)^{n/2}}\, \e^{-r^2/2} r^{n-1}.
\end{equation*}
Moreover, the change of variables $t = r s$ with $s\in[0,1]$ gives
\[
\int_0^r \e^{-t^2/2} t^{\,n-1}\,\dlat t
= r^{\,n} \int_0^1 \e^{-r^2 s^2/2} s^{\,n-1}\,\dlat s ,
\]
and therefore
\begin{align*}
H_n(r)
= \int_0^1 r\, \e^{r^2(1-s^2)/2} s^{n-1}\,\dlat s .
\end{align*}
Since the integrand defining $H_n$ and its partial derivative with respect to $r$ are continuous on $(0,\infty)\times[0,1]$, Leibniz's rule yields
\begin{equation*}
H_n'(r)=\int_0^1 \bigl(1 + r^2(1-s^2)\bigr)\e^{r^2(1-s^2)/2}
s^{n-1}\,\dlat s.
\end{equation*}
Applying Leibniz's rule once more,
\begin{equation*}
H_n''(r)=\int_0^1 r(1-s^2)\bigl(3 + r^2(1-s^2)\bigr)
\e^{r^2(1-s^2)/2}s^{n-1}\,\dlat s.
\end{equation*}

The integrand in the expression for $H_n''(r)$ is strictly positive for every $r>0$ and $s\in(0,1)$. Hence $H_n''(r)>0$ for all $r>0$, which implies that $H_n$ is strictly convex on $(0,\infty)$. This concludes the proof.
\end{proof}

\section{Applications of the general principle and further discussion}\label{s: Applications}

Throughout this section we illustrate the scope of the abstract principles shown above by deriving several sharp inequalities for information functionals in different settings, together with some complementary observations that clarify their structural features.
\subsection{Ratio of consecutive (classical and dual) quermassintegrals}
As a first illustration, we show how Theorem \ref{t: concavity general information functional} yields a sharp inequality for ratios of consecutive quermassintegrals. Using Corollary \ref{c: equiv superadditivity and linear BM} we then get Theorem \ref{t: GHP}, jointly with the characterization of the equality case: there, equality holds if and only if $K$ is a Euclidean ball.

\begin{theorem}\label{t: cociente quermass}
Let $K\in\K^n$ be a convex body with nonempty interior and let $0\leqslant i\leqslant n-1$. Then, for all $\lambda\in(0,1)$ and any $r>0$,
\begin{equation}\label{eq: cociente quermass}
\frac{\W_i\bigl((1-\lambda) K+\lambda \,rB_n\bigr)}{\W_{i+1}\bigl((1-\lambda)K+\lambda \,rB_n\bigr)}
\geqslant(1-\lambda)\,\frac{\W_i(K)}{\W_{i+1}(K)}+\lambda\,\frac{\W_i(rB_n)}{\W_{i+1}(rB_n)}.
\end{equation}
Equality for $0 \leqslant i <n-1$, for some $\lambda\in(0,1)$, holds if and only if $K$ is a Euclidean ball.
\end{theorem}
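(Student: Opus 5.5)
We will apply Theorem~\ref{t: concavity general information functional} with the abstract convex cone $\K=\{L\in\K^n: L \text{ has nonempty interior}\}\cup\{\{0\}\}$, where $\oplus$ is Minkowski addition and $\cdot$ is the usual scalar multiplication. The functional is $\F=\W_i$, which is positive and $(n-i)$-homogeneous by \eqref{e: hom W_i} and \eqref{e: pos W_i}, and we take $E=rB_n$. Since $(1-\lambda)K+\lambda rB_n$ has nonempty interior, all elements of $\K_{K;rB_n}$ do too.

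Hypothesis (i), the $1/(n-i)$-concavity on $\K_{K;rB_n}$, is exactly Theorem~\ref{t: BM quermass}.

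For hypothesis (ii) we compute the first variation. The Steiner formula \eqref{e:Steiner quermass} with $E=B_n$, together with $\varepsilon\, rB_n=(\varepsilon r)B_n$, gives
\[
\G_{rB_n}(L)=\left.\frac{\dlat^+}{\dlat\varepsilon}\right|_{\varepsilon=0}\W_i(L+\varepsilon rB_n)=(n-i)\,r\,\W_{i+1}(L).
\]
This is positive and finite for every $L$ with nonempty interior, by \eqref{e: pos W_i}.

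The information functional is therefore $\W_i/\G_{rB_n}=\W_i/\bigl((n-i)r\W_{i+1}\bigr)$. The constant factor $(n-i)r$ is the same on both sides of \eqref{e: concavity general information functional}, so it cancels and yields \eqref{eq: cociente quermass}. The superadditive form, Theorem~\ref{t: GHP}, then follows from Corollary~\ref{c: equiv superadditivity and linear BM}; more simply, it follows directly from the $1$-homogeneity of $\W_i/\W_{i+1}$ by taking $\lambda=1/2$ and rescaling $r$.

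For the equality case, Theorem~\ref{t: concavity general information functional} says that equality holds for some $\lambda\in(0,1)$ if and only if $F(t)=\W_i\bigl((1-t)K+t\,rB_n\bigr)^{1/(n-i)}$ is affine on $[0,\lambda]$. If $K$ is a ball $x+sB_n$, then $(1-t)K+trB_n$ is a translate of $\bigl((1-t)s+tr\bigr)B_n$. Translation invariance and homogeneity make $F$ affine on all of $[0,1]$, so equality holds.

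Conversely, suppose $F$ is affine on $[0,\lambda]$. Then equality holds in \eqref{e: BM quermass} for the pair $K$ and $K_\lambda:=(1-\lambda)K+\lambda rB_n$ at parameter $1/2$, because the midpoint of $K$ and $K_\lambda$ is $K_{\lambda/2}$. For $i<n-1$, the equality case of Theorem~\ref{t: BM quermass} gives that $K$ and $K_\lambda$ are homothetic, say $K_\lambda=x+\mu K$.

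The main obstacle is to deduce from this that $K$ is a ball. Passing to support functions, homothety gives
\[
(1-\lambda)h_K+\lambda r=\langle x,\cdot\rangle+\mu h_K .
\]
\begin{itemize}
\item If $\mu\neq 1-\lambda$, this expresses $h_K$ as $(\lambda r-\langle x,\cdot\rangle)/(\mu-1+\lambda)$, the support function of a ball up to translation. Positivity of widths forces the ball to be nondegenerate.
\item If $\mu=1-\lambda$, it gives $\lambda r=\langle x,u\rangle$ for all $u\in\s^{n-1}$. This is impossible since $\lambda r>0$ (take $u$ and $-u$).
\end{itemize}
Hence $K$ is a Euclidean ball.
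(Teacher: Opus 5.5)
Your proposal is correct and follows essentially the same route as the paper: the same cone, $\F=\W_i$ with $m=n-i$, the first variation $(n-i)r\W_{i+1}$ from the Steiner formula, and the equality case reduced to the equality case of Theorem~\ref{t: BM quermass} for $K$ and $(1-\lambda)K+\lambda rB_n$. The differences are only cosmetic: you extract that equality via the midpoint $K_{\lambda/2}$, and you replace the paper's appeal to the cancellation law with an explicit support-function computation, which is the same argument written out.
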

\begin{proof}
We work in the abstract convex cone $(\K, \oplus, \cdot)$ given by the family 
\[
\K=\bigl\{L\in\K^n: L \text{ has nonempty interior}\bigr\}\cup\bigl\{\{0\}\big\}
\] 
of all convex bodies in $\R^n$ with nonempty interior, together with $\{0\}$, endowed with the Minkowski addition $\oplus = +$ and the usual multiplication by positive scalars. We fix $r>0$ and set $E = rB_n$.

Now we define the functional $\F:\K\longrightarrow\R_{\geqslant0}$ given by $\F(L)=\W_i(L)$.
Note that $\W_i$ is homogeneous of degree $m=n-i$ by \eqref{e: hom W_i}, i.e.,
\[
\F(\alpha \cdot L)=\W_i(\alpha L)=\alpha^{n-i}\, \W_i(L)=\alpha^{m}\F(L)
\]
for all $\alpha\geqslant0$ and $L\in\K$.  Moreover, for any convex body $L\in\K^n$ with nonempty interior, $\W_i(L)>0$ (see \eqref{e: pos W_i}). So, $\F$ is a positive homogeneous functional of degree $m$. Furthermore, the Brunn-Minkowski inequality for quermassintegrals \eqref{e: BM quermass} states precisely that $\F$ satisfies condition (i) of Theo\-rem \ref{t: concavity general information functional} with $m=n-i$. 

Next we determine the first variation of $\F$ relative to $E$. Given a convex body $L\in\K^n$ with nonempty interior we have
\[
\G_E(L):=\lim_{t\to0^+}\frac{\F(L \oplus t \cdot E)-\F(L)}{t}
=\left.\frac{\dlat^+}{\dlat t}\right|_{t=0} \W_i(L+trB_n).
\]
Using the Steiner formula \eqref{e:Steiner quermass} together with the homogeneity of the quermassintegrals, we obtain
\[
\left.\frac{\dlat^+}{\dlat t}\right|_{t=0}\W_i(L+t r B_n)
=(n-i) r \W_{i+1}(L),
\]
and consequently
\[
\G_E(L)=\G_{rB_n}(L)=(n-i)r\W_{i+1}(L).
\]
Finally, since $K, E\in\K^n$ have nonempty interior, every element $L\in\K_{K;E}$ also has nonempty interior. Hence $\W_{i+1}(L)$ is positive (and finite) on $\K_{K;E}$, and so condition~(ii) of Theorem~\ref{t: concavity general information functional} is satisfied on $\K_{K;E}$.

\smallskip

Therefore, all hypotheses of Theorem \ref{t: concavity general information functional} are fulfilled. Applying this result with the identifications above and multiplying by the positive factor $(n-i)r$ we arrive precisely at inequality \eqref{eq: cociente quermass}.

It remains to discuss the equality case. Let $0 \leqslant i < n-1$ be fixed. By Theorem \ref{t: concavity general information functional}, equality holds for some $\lambda\in(0,1)$ if and only if the function
\[
F(t)=\F\bigl((1-t)\cdot K\oplus t\cdot E \bigr)^{1/m}=\W_i\bigl((1-t)K+ trB_n \bigr)^{{1}/{(n-i)}}
\]
is affine on the interval $[0,\lambda]$. In other words, equality holds only if
\begin{equation*}
\begin{split}
\W_i&\Bigl((1-t)K+ t\bigl((1-\lambda) K+\lambda rB_n\bigr)\Bigr)^{1/(n-i)}
=F(t\lambda)
=(1-t)F(0)+tF(\lambda)\\
=&\, (1-t)\W_i(K)^{1/(n-i)}+t\W_i\bigl((1-\lambda)K+ \lambda rB_n\bigr)^{1/(n-i)}
\end{split}
\end{equation*}
for all $t\in[0,\lambda]$.
This is equivalent to having equality in the Brunn-Minkowski inequality \eqref{e: BM quermass} for the convex bodies with nonempty interiors $K$ and $(1-\lambda)K+ \lambda rB_n$. For $0\leqslant i<n-1$, the equality condition there tells us that this occurs exactly when
$K$ and $(1-\lambda)K+ \lambda rB_n$ are homothetic, which implies (for instance, by using the cancellation law -see e.g. \cite[p.~48]{Sch}) that $K$ and $E=rB_n$ are homothetic. Since $E$ is a Euclidean ball, $K$ itself must be a Euclidean ball. 
The converse is immediate. 
\end{proof}

Next we use Theorem \ref{t: convexity general information functional} to obtain a sharp inequality for ratios of consecutive dual quermassintegrals under the usual radial sum.
This result, stated in addi\-tive form as a dual counterpart of Theorem~\ref{t: GHP}, and without the equality conditions, was shown in \cite{LL} following the original proof of Theorem~\ref{t: GHP} given by Giannopoulos, Hartzoulaki and Paouris \cite{GHP}. 
\begin{theorem}\label{t: cociente dual quermass}
Let $K\in\mathcal{S}^n$ be a star body and let $0\leqslant i\leqslant n-1$. Then, for all $\lambda\in(0,1)$ and any $r>0$,
\begin{equation}\label{eq: dual quermass ratio}
\frac{\widetilde{\W}_i\bigl((1-\lambda)K\widetilde{+}\lambda\, rB_n\bigr)}
     {\widetilde{\W}_{i+1}\bigl((1-\lambda)K\widetilde{+}\lambda\, rB_n\bigr)}\leqslant (1-\lambda)\frac{\widetilde{\W}_i(K)}{\widetilde{\W}_{i+1}(K)}+\lambda\frac{\widetilde{\W}_i(rB_n)}{\widetilde{\W}_{i+1}(rB_n)}.
\end{equation}
Equality for $0\leqslant i<n-1$, for some $\lambda\in(0,1)$, holds if and only if $K$ is a centred Euclidean ball.
\end{theorem}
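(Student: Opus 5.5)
The plan is to mirror the proof of Theorem~\ref{t: cociente quermass}, now invoking Theorem~\ref{t: convexity general information functional}.

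\textbf{Setting up the cone and the functional.} I take the abstract convex cone $\K=\mathcal{S}^n\cup\{\{0\}\}$ with $\oplus=\widetilde{+}$ and the usual scalar multiplication. The cone axioms hold because radial functions add and scale linearly: $\rho_{r K}=r\rho_K$ and $\rho_{(r+s)K}=r\rho_K+s\rho_K$. I fix $r>0$, set $E=rB_n$, and define $\F(L)=\widetilde{\W}_i(L)$.

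\textbf{Checking the hypotheses of Theorem~\ref{t: convexity general information functional}.}
\begin{itemize}
\item By \eqref{e: hom widetilde{W}_i}, $\F$ is homogeneous of degree $m=n-i$.
\item $\F$ is positive, since by \eqref{e:dualWint} it is the integral of $\rho_L^{n-i}$, and radial functions of star bodies are positive.
\item Theorem~\ref{t:dualBMquerm} says precisely that $\F$ is $(1/m)$-convex on $\K_{K;E}$.
\item For the first variation, the Steiner formula \eqref{eq: steiner formula widetilde{W}} with $E=B_n$, together with $L\widetilde{+}t\,rB_n=L\widetilde{+}(tr)B_n$, gives
\[
\G_E(L)=(n-i)\,r\,\widetilde{\W}_{i+1}(L).
\]
This is positive by \eqref{e:dualWint}, so condition (ii) holds.
\end{itemize}
Applying Theorem~\ref{t: convexity general information functional} and multiplying by $(n-i)r$ yields \eqref{eq: dual quermass ratio}.

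\textbf{Equality case.} Theorem~\ref{t: convexity general information functional} gives equality for some $\lambda$ iff $t\mapsto \widetilde{\W}_i\bigl((1-t)K\widetilde{+}t\,rB_n\bigr)^{1/(n-i)}$ is affine on $[0,\lambda]$. Arguing as in the proof of Theorem~\ref{t: cociente quermass}, using the reparametrization \eqref{e: convex comb K and E by tlambda}, this means equality in \eqref{e:dualBMquerm} for the pair $K$ and $(1-\lambda)K\widetilde{+}\lambda rB_n$.

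For $i<n-1$, Theorem~\ref{t:dualBMquerm} then forces these two bodies to be dilatates:
\[
\rho_{(1-\lambda)K\widetilde{+}\lambda rB_n}=(1-\lambda)\rho_K+\lambda r=c\,\rho_K
\]
for some $c>0$. Hence $(c-1+\lambda)\rho_K\equiv\lambda r$. Since $\lambda r>0$, the coefficient is nonzero, so $\rho_K$ is constant and $K$ is a centred ball. This step replaces the cancellation law used in the Minkowski case and is actually simpler.

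Conversely, if $K=sB_n$, the whole segment consists of centred balls. Then $F$ is affine, since $\widetilde{\W}_i(\rho B_n)^{1/(n-i)}$ is linear in $\rho$, and equality holds.

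\textbf{Main obstacle.} The only delicate points are the bookkeeping in identifying $\G_E$ via the Steiner formula and confirming that the equality characterization transfers correctly. Both appear routine given the earlier results.
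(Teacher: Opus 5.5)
Your proposal is correct and follows the paper's route: the paper also uses the cone $\mathcal{S}^n\cup\{0\}$ with radial addition, takes $E=rB_n$ and $\F=\widetilde{\W}_i$, identifies $\G_E=(n-i)r\,\widetilde{\W}_{i+1}$ via the dual Steiner formula, and applies Theorem~\ref{t: convexity general information functional} together with the equality case of Theorem~\ref{t:dualBMquerm}. Your radial-function argument, where $(1-\lambda)\rho_K+\lambda r=c\,\rho_K$ forces $\rho_K$ to be constant, spells out explicitly the step the paper only calls ``entirely analogous'' to the cancellation-law argument in Theorem~\ref{t: cociente quermass}.
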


The proof we provide here is entirely analogous to that of Theorem~\ref{t: cociente quermass}. More precisely, we consider the abstract convex cone with underlying set $\mathcal{S}^n\cup\{0\}$, endowed with the radial addition $\widetilde{+}$, and the usual scalar multiplication. Further, we fix $E=rB_n$ for some $r>0$.
Moreover, note that we have the corresponding ``dual analogues'' of the ingredients used in the proof of Theorem~\ref{t: cociente quermass}, namely, the homogeneity \eqref{e: hom widetilde{W}_i}, the positivity of the dual quermassintegrals on $\mathcal{S}^n$, the dual Steiner formula \eqref{eq: steiner formula widetilde{W}}, and the dual Brunn-Minkowski inequality for dual quermassintegrals \eqref{e:dualBMquerm}, together with its equa\-lity characterization in Theorem~\ref{t:dualBMquerm}.
Thus, all the hypotheses of Theorem~\ref{t: convexity general information functional} are satisfied by dual quermassintegrals and the above cone. Therefore, using Theorem~\ref{t: convexity general information functional},  we get \eqref{eq: dual quermass ratio} together with its equality case.

\subsection{Ratio of consecutive mixed discriminants}

As a second application, we use Theorem \ref{t: concavity general information functional} with the cone of symmetric positive definite matrices to recover Theorem \ref{t: cociente mixed discriminants}, which gives a sharp inequality for ratios of consecutive mixed discriminants.
\begin{theorem}\label{t: cociente mixed discriminants}
Let $A\in\Sym^{++}(n)$ be a symmetric positive definite matrix, and let $0\leqslant i\leqslant n-1$. Then, for all $\lambda\in(0,1)$ and any $r>0$,
\begin{equation}\label{eq: cociente mixed discriminants}
\frac{\D_i\bigl((1-\lambda) A+\lambda \,rI_n\bigr)}{\D_{i+1}\bigl((1-\lambda)A+\lambda \,rI_n\bigr)}
\geqslant(1-\lambda)\,\frac{\D_i(A)}{\D_{i+1}(A)}+\lambda\,\frac{\D_i(rI_n)}{\D_{i+1}(rI_n)}.
\end{equation}
Equality for $0\leqslant i < n-1$, for some $\lambda\in(0,1)$, holds if and only if $A=\alpha I_n$ for some $\alpha>0$.
\end{theorem}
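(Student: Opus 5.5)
The plan is to mirror the proof of Theorem~\ref{t: cociente quermass} and apply Theorem~\ref{t: concavity general information functional} in the matrix setting. We work in the abstract convex cone $\K=\Sym^{++}(n)\cup\{\boldsymbol{0}\}$, with ordinary matrix addition $\oplus=+$ and the usual scalar multiplication. The cone axioms of Definition~\ref{def:cone} are immediate. The sum of two positive definite matrices is again positive definite, hence nonzero. We fix $r>0$, set $E=rI_n$, and define $\F(L)=\D_i(L)$ for $L\in\Sym^{++}(n)$, with $\F(\boldsymbol{0})=0$.

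We first check the hypotheses of Theorem~\ref{t: concavity general information functional}. By \eqref{e: hom D_i} with $\lambda_2=1$, $\F$ is homogeneous of degree $m=n-i$. By \eqref{e: pos D_i}, $\F$ is positive on $\Sym^{++}(n)$. Every element of $\K_{A;E}$ is positive definite, since it is a nonnegative combination, not both coefficients zero, of $A$ and $rI_n$. Therefore Theorem~\ref{thm:BM-mixed-discriminants} gives exactly the $(1/(n-i))$-concavity of $\F$ on $\K_{A;E}$, which is condition~(i).

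For the first variation, the Steiner formula \eqref{e:Steiner mixed discriminants} with $B=I_n$ gives
\[
\D_i(L+trI_n)=\D_i(L)+(n-i)\,tr\,\D_{i+1}(L)+O(t^2).
\]
Hence $\G_E(L)=(n-i)r\,\D_{i+1}(L)$. This is positive and finite on $\K_{A;E}$ by \eqref{e: pos D_i}, which is condition~(ii). Theorem~\ref{t: concavity general information functional} then yields the inequality for $\F/\G_E$. Multiplying by the positive constant $(n-i)r$ gives \eqref{eq: cociente mixed discriminants}.

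The equality case needs the most care. By Theorem~\ref{t: concavity general information functional}, equality at some $\lambda\in(0,1)$ holds if and only if $t\mapsto \D_i\bigl((1-t)A+trI_n\bigr)^{1/(n-i)}$ is affine on $[0,\lambda]$. As in the quermassintegral proof, we reparametrize the segment from $A$ to $A_\lambda:=(1-\lambda)A+\lambda rI_n$. Affinity then means equality in \eqref{eq:BM-mixed-discriminants} for the pair $A$, $A_\lambda$.

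For $0\le i<n-1$, the equality case in Theorem~\ref{thm:BM-mixed-discriminants} forces $A=\alpha A_\lambda$ for some $\alpha>0$. This gives $(1-\alpha(1-\lambda))A=\alpha\lambda r I_n$. The right-hand side is a nonzero matrix, so the coefficient $1-\alpha(1-\lambda)$ is nonzero, and $A$ is a multiple of $I_n$. Positive definiteness makes that multiple positive. Matrix cancellation is trivial here, unlike the convex-body setting, so this step poses no real obstacle.

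For the converse, suppose $A=\alpha I_n$. Then $\D_i\bigl((1-t)\alpha I_n+trI_n\bigr)^{1/(n-i)}=\bigl((1-t)\alpha+tr\bigr)\D_i(I_n)^{1/(n-i)}$ by homogeneity, which is affine in $t$. Hence equality holds.
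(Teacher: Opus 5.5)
Your proposal is correct and follows essentially the same route as the paper: apply Theorem~\ref{t: concavity general information functional} on the cone $\Sym^{++}(n)\cup\{\boldsymbol{0}\}$ with $E=rI_n$, $\F=\D_i$, $m=n-i$ and $\G_E=(n-i)r\,\D_{i+1}$ (via the Steiner formula \eqref{e:Steiner mixed discriminants}), then reduce the equality case to equality in Theorem~\ref{thm:BM-mixed-discriminants} for the pair $A$ and $(1-\lambda)A+\lambda rI_n$. Your explicit treatment of the cancellation step and of the converse fills in details the paper leaves as ``entirely analogous'' to the proof of Theorem~\ref{t: cociente quermass}.
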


The proof is entirely analogous to that of Theorem~\ref{t: cociente quermass}. For that, we consider the abstract convex cone with underlying set $\Sym^{++}(n)\cup\{\boldsymbol{0}\}$ (here $\boldsymbol{0}$ denotes the zero matrix), endowed with the usual matrix addition and multiplication by positive scalars. Furthermore, we fix $E=rI_n$ for some $r>0$. Moreover, note that we have the corresponding “matrix analogues” of the ingredients used in the
proof of Theorem~\ref{t: cociente quermass}, namely, the homogeneity \eqref{e: hom D_i}, the positivity \eqref{e: pos D_i}, the Steiner formula \eqref{e:Steiner mixed discriminants}, and the Brunn-Minkowski inequality \eqref{eq:BM-mixed-discriminants}, together with its equality characterization in Theorem~\ref{thm:BM-mixed-discriminants}.
Thus, all the hypotheses of Theorem~\ref{t: concavity general information functional} are satisfied by mixed discriminants and the above cone. Therefore, using Theorem \ref{t: concavity general information functional}, we get \eqref{eq: cociente mixed discriminants} and its equality case.

It is worth noting that Theorem~\ref{t: cociente mixed discriminants} follows directly from a classical ine\-quality due to Marcus and Lopes \cite{ML}. We include the proof here for completeness. To this end, we introduce some notation. For a vector $a=(a_1,\ldots,a_n)\in\R^n$, let $\E_j(a)$ denote the $j$-th elementary symmetric function of its coordinates, given by
\[
\E_j(a)=\sum_{1\leqslant i_1 < \cdots \,< i_j\leqslant n}\,\prod_{k=1}^j a_{i_k},  
\]
for $1 \leqslant j \leqslant n$, with the convention $\E_0(a):=1$. Define, for $1\leqslant j\leqslant n$,
\[
\mathcal{E}_j(a):=\frac{\E_j(a)}{\E_{j-1}(a)}.
\]

Marcus and Lopes proved in \cite[Theorem 1]{ML} that $\mathcal{E}_j$ is superadditive:
\begin{thm}[Marcus-Lopes]\label{t: Marcus-Lopes}
For all nonnegative vectors $x,y\in\R^n_{\geqslant0}$ and every $1\leqslant j \leqslant n$,
\[
\mathcal{E}_j(x+y) \geqslant   \mathcal{E}_j(x)+\mathcal{E}_j(y)
\]
with equality if and only if either $x$ and $y$ are proportional or $j=1$.
\end{thm}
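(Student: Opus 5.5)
I would prove this by induction on $j$, using the classical leave-one-out recursion for elementary symmetric functions. The abstract principle of Theorem~\ref{t: concavity general information functional} does not apply directly here. The first variation of $\E_j$ in a direction $y$ is $\sum_k y_k\,\partial_k\E_j$, which equals $(n-j+1)\E_{j-1}$ only when $y=(1,\dots,1)$. Marcus-Lopes is exactly the case where neither summand is the distinguished element, which the discussion after Corollary~\ref{c: equiv superadditivity and linear BM} flags as not covered.

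First I would treat $x,y$ with positive coordinates, and recover the boundary cases (some coordinates zero, or $\E_{j-1}=0$) by continuity, using the convention that the quotient is $0$ when undefined. For a vector $x$, let $x^{(k)}\in\R^{n-1}$ be $x$ with its $k$-th coordinate deleted, and write $\E_i^{(k)}=\E_i(x^{(k)})$. The base case $j=1$ is trivial, since $\mathcal{E}_1=\E_1$ is linear and equality always holds.

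For the inductive step I would use the two expansions
\[
\E_j=x_k\E_{j-1}^{(k)}+\E_j^{(k)},\qquad \E_{j-1}=x_k\E_{j-2}^{(k)}+\E_{j-1}^{(k)},
\]
together with Euler's identity $j\E_j=\sum_k x_k\E_{j-1}^{(k)}$. Substituting $\E_{j-1}^{(k)}=\E_{j-1}-x_k\E_{j-2}^{(k)}$ gives $j\E_j=\E_1\E_{j-1}-\sum_k x_k^2\E_{j-2}^{(k)}$. Dividing by $\E_{j-1}$ and using the second expansion yields the key identity
\[
j\,\mathcal{E}_j(x)=\E_1(x)-\sum_{k=1}^n\frac{x_k^2}{x_k+\mathcal{E}_{j-1}(x^{(k)})}.
\]
The right-hand side involves only the linear functional $\E_1$ and the quantities $\mathcal{E}_{j-1}$ in $n-1$ variables, to which the induction hypothesis applies.

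Set $\Phi(a,r)=a^2/(a+r)$ on $a>0$, $r\geqslant0$. This function is $1$-homogeneous and convex (a perspective-type function), hence subadditive, and it is decreasing in $r$. Combining this with the inductive superadditivity $\mathcal{E}_{j-1}\bigl((x+y)^{(k)}\bigr)\geqslant\mathcal{E}_{j-1}(x^{(k)})+\mathcal{E}_{j-1}(y^{(k)})$, I get for each $k$
\[
\Phi\bigl(x_k+y_k,\mathcal{E}_{j-1}((x+y)^{(k)})\bigr)\leqslant\Phi\bigl(x_k+y_k,\mathcal{E}_{j-1}(x^{(k)})+\mathcal{E}_{j-1}(y^{(k)})\bigr)\leqslant\Phi\bigl(x_k,\mathcal{E}_{j-1}(x^{(k)})\bigr)+\Phi\bigl(y_k,\mathcal{E}_{j-1}(y^{(k)})\bigr).
\]
Summing over $k$ and using the linearity of $\E_1$ gives the superadditivity of $\mathcal{E}_j$.

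I expect the main obstacle to be the equality case. For $j\geqslant2$, equality forces equality in the subadditivity of $\Phi$ for every $k$, and $\Phi$ is strictly convex transversally to rays. So for each $k$ the pairs $(x_k,\mathcal{E}_{j-1}(x^{(k)}))$ and $(y_k,\mathcal{E}_{j-1}(y^{(k)}))$ must be proportional, say with ratio $c_k$. I would then need to show that all the $c_k$ coincide. My first attempt would use $1$-homogeneity of $\mathcal{E}_{j-1}$ together with the inductive equality case: equality must also hold in the superadditivity step for $\mathcal{E}_{j-1}$ whenever $\Phi$ is strictly decreasing in $r$, which it is when $a>0$. By induction this gives $x^{(k)}\parallel y^{(k)}$ for each $k$ when $j-1\geqslant2$, and for $n\geqslant3$ these proportionalities glue to $x\parallel y$. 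The case $j=2$ has to be handled separately, since there the inductive equality statement is vacuous. There I would argue directly from $x_k/y_k=\E_1(x^{(k)})/\E_1(y^{(k)})$ for all $k$, which forces $x_k/y_k=\E_1(x)/\E_1(y)$ and hence proportionality. Zero coordinates would need a small separate argument.
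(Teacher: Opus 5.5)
The paper does not prove this statement; it quotes it from \cite[Theorem~1]{ML}, so there is no internal argument to compare with. Your route is essentially the standard inductive proof of this inequality. It rests on the leave-one-out identity
\[
j\,\mathcal{E}_j(x)=\E_1(x)-\sum_{k=1}^n\frac{x_k^2}{x_k+\mathcal{E}_{j-1}(x^{(k)})},
\]
on the subadditivity of $\Phi(a,r)=a^2/(a+r)$ (a Cauchy--Schwarz type inequality), and on induction.

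For vectors with positive coordinates it checks out:
\begin{itemize}
\item The identity is correct, and $\Phi$ is convex, $1$-homogeneous and strictly decreasing in $r$ for $a>0$.
\item For $j\geqslant3$ the gluing needs $n\geqslant3$, which is automatic since $n\geqslant j$.
\item For $j=2$, the proportionality $(x_k,\E_1(x)-x_k)\parallel(y_k,\E_1(y)-y_k)$ gives $x_k\E_1(y)=y_k\E_1(x)$ for every $k$.
\end{itemize}
You should state explicitly that the induction on $j$ is run for all $n\geqslant j$ simultaneously, since the hypothesis is invoked in $n-1$ variables.

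Your observation that Theorem~\ref{t: concavity general information functional} only reaches the case $y=(r,\ldots,r)$, where the first variation of $\E_j$ is $(n-j+1)\E_{j-1}$, is accurate. That special case is exactly what Theorem~\ref{t: cociente mixed discriminants} needs, so your argument gives strictly more than the abstract principle.

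There is, however, a real problem at the boundary of the orthant.

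The continuity extension of the inequality is fine but needs a line. Continuity of $\mathcal{E}_j$ (with your convention) at points where $\E_{j-1}$ vanishes is not automatic. If $x$ has exactly $m\leqslant j-2$ positive entries, Newton's inequalities give $\mathcal{E}_j\leqslant C_{n,j,m}\,\mathcal{E}_{m+1}$ on positive vectors, and $\mathcal{E}_{m+1}$ is continuous at $x$ with value $0$.

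The step that cannot be carried out is the ``small separate argument'' for zero coordinates in the equality case, because the equality clause as stated is false there. Take $n=j=3$, $x=(1,1,0)$ and $y=(1,2,0)$. Then $\E_2(x)=1$, $\E_2(y)=2$, $\E_2(x+y)=6$, and $\E_3=0$ for all three vectors. Hence $\mathcal{E}_3(x+y)=\mathcal{E}_3(x)+\mathcal{E}_3(y)=0$, although $x$ and $y$ are not proportional. More generally, with your convention for $0/0$, equality holds whenever $x+y$ has fewer than $j$ positive entries.

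So the equality characterization must be restricted to vectors with positive coordinates, or amended by such degenerate cases. That restriction costs nothing in the paper, which applies the result only to the eigenvalues of $A\in\Sym^{++}(n)$ and to $(r,\ldots,r)$. In that setting your proof is complete.
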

Moreover, $\mathcal{E}_j$ is positively homogeneous of degree $1$. Hence, its super\-addi\-tivity is equivalent to the linear inequality
\begin{equation}\label{eq: desigualdad ML}
\mathcal{E}_j\bigl((1-\lambda)x+\lambda y \bigr)
\geqslant (1-\lambda)\mathcal{E}_j(x)+\lambda \mathcal{E}_j(y)
\end{equation}
for all nonnegative vectors $x,y\in\R^n$ and any $\lambda \in [0,1]$.

As shown in the following remark, this classical arithmetic inequality provides an alternative proof of Theorem~\ref{t: cociente mixed discriminants}. 
\begin{remark}[A direct proof via Marcus-Lopes]
Let $A\in\Sym^{++}(n)$ have eigenvalues $\alpha_1,\ldots,\alpha_n>0$, and set $\alpha:=(\alpha_1,\ldots,\alpha_n)\in\R^n$. Expanding the cha\-racteristic polynomial, we obtain 
\[
\det(A+\mu I_n)=\prod_{i=1}^n(\alpha_i+\mu)
=\sum_{k=0}^n \E_{n-k}(\alpha)\mu^k.
\]
The definition of the mixed discriminants \eqref{eq:detpoly} then yields 
\begin{equation*}\label{eq: relación mixed discriminants y funcion simetrica elemental}
\D_k(A)=\frac{1}{\binom{n}{k}}\,\E_{n-k}(\alpha),\qquad 0\leqslant k\leqslant n. 
\end{equation*}
Now, let $0\leqslant i\leqslant n-1$ be fixed. Then, from the above identity, we have
\begin{equation}\label{eq: D_i/D_{i+1} en terminos de negrita E}
\frac{\D_i(A)}{\D_{i+1}(A)}=\frac{\binom{n}{i+1}}{\binom{n}{i}}\frac{\E_{n-i}(\alpha)}{\E_{n-i-1}(\alpha)}=\frac{n-i}{i+1}\mathcal{E}_{n-i}(\alpha).
\end{equation}
Observing that $rI_n$ has the sole eigenvalue $r$, and applying \eqref{eq: desigualdad ML} with $j=n-i$, $x=\alpha$ and $y=(r,\ldots,r)$, we get 
\begin{equation}\label{eq: desigualdad ML with lambda}
\mathcal{E}_{n-i}\bigl((1-\lambda)\alpha+\lambda(r,\ldots,r)\bigr)
\geqslant
(1-\lambda)\mathcal{E}_{n-i}(\alpha)
+
\lambda \mathcal{E}_{n-i}(r,\ldots,r).
\end{equation}
Multiplying by ${(n-i)}/{(i+1)}>0$ and using \eqref{eq: D_i/D_{i+1} en terminos de negrita E} for $A$, $rI_n$, and $(1-\lambda)A+\lambda rI_n$, we recover exactly \eqref{eq: cociente mixed discriminants}.

Finally, we examine the equality condition for fixed $0\leqslant i<n-1$. Assume equality in \eqref{eq: cociente mixed discriminants} for some $\lambda\in(0,1)$, which is equivalent to equality in \eqref{eq: desigualdad ML with lambda} for the same $\lambda\in(0,1)$. According to Theorem \ref{t: Marcus-Lopes}, equality in \eqref{eq: desigualdad ML with lambda} holds if and only if $(1-\lambda)\alpha$ and $\lambda (r,\ldots,r)$ are proportional, because $n-i>1$. Since $0<\lambda<1$ and $r>0$, this is equivalent to $\alpha=(c,\ldots,c)$ for some $c>0$, i.e., $A=c I_n$ for some $c>0$. Conversely, if $A=c I_n$ for some $c>0$, then $\alpha$ and $(r,\ldots,r)$ are proportional, so equality holds in \eqref{eq: desigualdad ML} for every $\lambda\in[0,1]$, and thus also in \eqref{eq: cociente mixed discriminants}. 
\end{remark}

The proof of Theorem~\ref{t: cociente mixed discriminants} collected in the above remark is purely of algebraic nature and seems to strongly rely on matrix ingredients such as diagonalization and eigenvalues properties. Our approach, however, reveals that the same inequality is not specific to mixed discriminants, but follows from the common structural mechanism of homogeneity, concavity and first variation.

\subsection{$L_p$ versions of (classical and dual) information functionals}
Theorem~\ref{t: concavity general information functional} also yields a proof of Theorem~\ref{t: cociente Lp surface area}, a sharp inequality for the ratio of volume to $L_p$ surface area. This result may be viewed as an $L_p$ extension of the Brunn-Minkowski inequality \eqref{e: DCT ineq information functional} for the classical information functional in the case where one of the summands is a centred Euclidean ball. Although the proof is closely parallel to that of Theorem~\ref{t: cociente quermass}, we include it in full for the sake of completeness.

\begin{proof}[Proof of Theorem \ref{t: cociente Lp surface area}]
We work in the abstract convex cone $(\K,\oplus,\cdot)$ given by the family
\[
\K=\bigl\{L\in\K^n:L\text{ has nonempty interior and contains the origin}\bigr\}
\cup\bigl\{\{0\}\bigr\}
\]
of all convex bodies in $\R^n$ with nonempty interior containing the origin, together with $\{0\}$, endowed with the $L_p$ addition $\oplus=+_p$ and the $p$-scalar multiplication $\cdot$ defined by \eqref{eq: p-multiplication scalar}. We fix $r>0$ and set $E=rB_n$.

Now we define the functional $\F:\K\longrightarrow\R_{\geqslant0}$ given by $\F(L)=\vol(L)$.
Note that $\vol(\cdot)$ is homogeneous of degree $m=n/p$ with respect to the $p$-scalar multiplication $\cdot$, i.e.,
\begin{equation}\label{eq: hom vol Lp}
\F(\alpha\cdot L)
=\vol(\alpha\cdot L)
=\vol(\alpha^{1/p}L)
=\alpha^{n/p}\vol(L)
=\alpha^m\F(L)
\end{equation}
for all $\alpha\geqslant0$ and $L\in\K$. Moreover, for any convex body $L\in\K^n$ with nonempty interior, $\vol(L)>0$. Hence, $\F$ is a positive $m$-homogeneous functional. Furthermore, the $L_p$ Brunn-Minkowski inequality \eqref{eq:BM-Lp} states precisely that $\F$ satisfies condition~(i) of Theorem~\ref{t: concavity general information functional} with $m=n/p$.

Next, we determine the first variation of $\F$ relative to $E$. Given a convex body $L\in\K^n$ with nonempty interior containing the origin we have
\[
\G_E(L):=\lim_{t\to0^+}\frac{\F(L\oplus t\cdot E)-\F(L)}{t}
=\left.\frac{\dlat^+}{\dlat t}\right|_{t=0}
\vol\bigl(L +_p t\cdot (rB_n)\bigr).
\]
Using \eqref{e: def Lp mixed vol} and \eqref{eq: integral expression first Lp mixed volume}, we obtain
\[
\left.\frac{\dlat^+}{\dlat t}\right|_{t=0}
\vol\bigl(L +_p t\cdot (rB_n)\bigr)
=\frac{r^p}{p}\int_{\s^{n-1}}
h(L,u)^{1-p}\,\dlat\S_{n-1}(L,u)
=r^p\S_p(L),
\]
and consequently
\[
\G_E(L)=\G_{rB_n}(L)=r^p\S_p(L).
\]
Finally, since $K\in\K^n$ has nonempty interior and contains the origin then every element $L\in\K_{K;E}$ also has nonempty interior and contains the origin. Hence $\S_p(L)$ is positive and finite on $\K_{K;E}$, and so condition~(ii) of Theorem~\ref{t: concavity general information functional} is satisfied on $\K_{K;E}$.

\smallskip

Therefore, all hypotheses of Theorem~\ref{t: concavity general information functional} are fulfilled. Applying this result with the identifications above and multiplying by the positive factor $r^p$ we arrive precisely at inequality \eqref{eq: cociente Lp surface area}.

It remains to discuss the equality case. By Theorem~\ref{t: concavity general information functional}, equality holds for some $\lambda\in(0,1)$ if and only if the function
\[
F(t)=\F\bigl((1-t)\cdot K\oplus t\cdot E\bigr)^{1/m}
=\vol\bigl((1-t)\cdot K +_p t\cdot (rB_n)\bigr)^{p/n}
\]
is affine on the interval $[0,\lambda]$. In other words, equality holds only if
\begin{equation*}
\begin{split}
&\vol\Bigl((1-t)\cdot K +_p t\cdot\bigl((1-\lambda)\cdot K +_p \lambda\cdot (rB_n)\bigr)\Bigr)^{p/n} 
=F(t\lambda)\\
&\!\!=(1-t)F(0)+tF(\lambda)
=(1-t)\vol(K)^{p/n} \!+ t\vol\bigl((1-\lambda)\cdot K \!+_p \lambda\cdot (rB_n)\bigr)^{p/n}
\end{split}
\end{equation*}
for all $t\in[0,\lambda]$.
This is equivalent to having equality in the $L_p$ Brunn-Minkowski inequality \eqref{eq:BM-Lp} for the convex bodies with nonempty interior containing the origin $K$ and $(1-\lambda)\cdot K+_p\lambda\cdot (rB_n)$. The equality condition there tells us that this occurs exactly when these two bodies are dilatates, which implies (for instance, by using the cancellation law -cf.~e.g.~\cite[p.~48]{Sch}) that $K$ and $E=rB_n$ are also dilatates. Since $E$ is a centred Euclidean ball, $K$ itself must be a centred Euclidean ball. The converse is immediate.
\end{proof}

\begin{remark}
\label{rem:comparison}
For a convex body $K\in\K^n$ containing the origin in its interior, the classical surface area and the $L_p$ surface area are given respectively by the integral formulas
\begin{equation}\label{eq: def S and Sp}
\begin{aligned}
\S(K)&=\int_{\s^{n-1}}\dlat\S_{n-1}(K,u),\\
\S_p(K)&=\frac{n}{p}\W_{p,1}(K)
=\frac1p\int_{\s^{n-1}}h(K,u)^{1-p}\,\dlat\S_{n-1}(K,u),
\end{aligned}
\end{equation}
where $p\geqslant1$ (see Section~\ref{s: Preliminaries}). In particular, $\S_1(K)=\S(K)$.

Although one may study the behaviour of the classical information functional \eqref{e: information functional}
under the $L_p$ addition, Theorem~\ref{t: cociente Lp surface area} shows that the canonical $L_p$ analogue seems to be instead
\[
I_p(K)=\frac{\vol(K)}{\S_p(K)}.
\]
The fundamental reason is that $\S_p$ is precisely the first variation of the volu\-me with respect to the $L_p$ addition. Thus, the denominator is prescribed by the underlying addition rather than by the volume itself.

Furthermore, it is worth observing that $\S$ and $\S_p$ are not comparable in general when $p>1$. Indeed, from \eqref{eq: def S and Sp} applied to the Euclidean ball, we have that
\[
\S_p(B_n)=\frac1p\,\S(B_n)<\S(B_n),
\]
while for $K=\varepsilon B_n$ with $\varepsilon>0$, 
\[
\tfrac1p\,\varepsilon^{1-p}\,\S(\varepsilon B_n)=\S_p(\varepsilon B_n)>\S(\varepsilon B_n),
\]
whenever $0<\varepsilon<p^{1/(1-p)}$.

Consequently, no inequality of the form $\S_p(K)\leqslant C\,\S(K)$ or $\S(K)\leqslant C\,\S_p(K)$, for a constant $C$, holds for all convex bodies.

From this perspective, Theorem~\ref{t: cociente Lp surface area} provides a genuine $L_p$ counterpart of the Brunn-Minkowski inequality for the classical information functional: it is the ratio between the volume and its corresponding relative surface area that satisfies the linear Brunn-Minkowski inequality.
\end{remark}

We conclude this section by applying the convexity principle Theorem~\ref{t: convexity general information functional} in the dual $L_p$ setting, to obtain a sharp inequality for the ratio of the volume to the $L_p$ dual surface area $\widetilde{\S}_p(\cdot)$. Its precise statement reads as follows.

\begin{theorem}\label{t: cociente Lp dual volume surface}
Let $K\in\mathcal{S}^n$ be a star body and let $0<p\leqslant n$. Then, for all $\lambda\in(0,1)$ and any $r>0$,
\begin{equation}\label{eq: Lp dual vol surf ratio}
\frac{\vol\bigl((1-\lambda)\cdot K\widetilde{+}_p\lambda\cdot rB_n\bigr)}
     {\widetilde{\S}_p\bigl((1-\lambda)\cdot K \widetilde{+}_p\lambda\cdot rB_n\bigr)}
\leqslant
(1-\lambda)\frac{\vol(K)}{\widetilde{\S}_p(K)}
+\lambda\frac{\vol(rB_n)}{\widetilde{\S}_p(rB_n)}.
\end{equation}
Equality for $p\neq n$, for some $\lambda\in(0,1)$, holds if and only if $K$ is a centred Euclidean ball.
\end{theorem}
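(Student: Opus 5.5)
We apply Theorem~\ref{t: convexity general information functional}, exactly parallel to the proof of Theorem~\ref{t: cociente Lp surface area} but in the dual setting. We work in the abstract convex cone with underlying set $\mathcal{S}^n\cup\{0\}$, endowed with the $p$-radial addition $\oplus=\widetilde{+}_p$ and the $p$-scalar multiplication \eqref{eq: p-multiplication scalar}. The cone axioms are immediate at the level of $\rho^p$, since $\rho_{\alpha\cdot L}^p=\alpha\rho_L^p$ and $\rho^p_{L\widetilde{+}_pM}=\rho_L^p+\rho_M^p$. We fix $r>0$, set $E=rB_n$ and take $\F=\vol$. As in \eqref{eq: hom vol Lp}, $\F$ is $m$-homogeneous with $m=n/p$, and it is positive on star bodies by \eqref{e: vol integral radial function polar coord}. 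The dual $L_p$ Brunn-Minkowski inequality \eqref{e: dual BM-Lp} (valid for $0<p\leqslant n$) is precisely the $(1/m)$-convexity of $\F$ on the whole cone, hence on $\K_{K;E}$, which is hypothesis (i).

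Next we compute the first variation. By \eqref{e: def Lp dual mixed vol} and \eqref{e: integral expression first dual Lp mixed volume}, for $L\in\mathcal{S}^n$,
\[
\G_E(L)=\left.\frac{\dlat^+}{\dlat t}\right|_{t=0}\vol\bigl(L\widetilde{+}_p t\cdot rB_n\bigr)=\frac{n}{p}\widetilde{\W}_{p,1}(L;rB_n)=\frac{r^p}{p}\int_{\s^{n-1}}\rho_L(u)^{n-p}\,\dlat u=r^p\,\widetilde{\S}_p(L).
\]
Alternatively, this follows by differentiating $\frac1n\int(\rho_L^p+t r^p)^{n/p}\,\dlat u$ under the integral sign, which is justified since $\rho_L$ is continuous and positive. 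Every element of $\K_{K;E}$ is a star body, with positive continuous radial function, so this quantity is positive and finite, which gives hypothesis (ii). Theorem~\ref{t: convexity general information functional} then yields the inequality for $\F/\G_E$, and multiplying by $r^p$ gives \eqref{eq: Lp dual vol surf ratio}.

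For the equality case with $p\neq n$, Theorem~\ref{t: convexity general information functional} says that equality holds for some $\lambda\in(0,1)$ if and only if $F(t)=\vol\bigl((1-t)\cdot K\widetilde{+}_p t\cdot rB_n\bigr)^{p/n}$ is affine on $[0,\lambda]$. Using the cone identity \eqref{e: convex comb K and E by tlambda}, this means equality in \eqref{e: dual BM-Lp} for the pair $K$ and $K':=(1-\lambda)\cdot K\widetilde{+}_p\lambda\cdot rB_n$. By Theorem~\ref{thm: dual BM-Lp}, $K$ and $K'$ must then be dilatates, so $\rho_{K'}=c\rho_K$ for some $c>0$. This gives
\[
(c^p-(1-\lambda))\rho_K^p=\lambda r^p.
\]
Since $\lambda r^p>0$, the coefficient $c^p-(1-\lambda)$ is positive, so $\rho_K$ is constant, i.e.\ $K$ is a centred ball. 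Conversely, if $K$ is a centred ball, then all the bodies involved are dilatates of $B_n$, $F$ is affine by homogeneity, and equality holds.

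The only point needing care is the cancellation step and checking that the cone axioms (for instance, $K\oplus E\neq 0$) hold. Both are trivial here because everything is expressed through $\rho^p$, so I expect no real obstacle.
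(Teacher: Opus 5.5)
Your proposal is correct and follows the paper's own route: you apply Theorem~\ref{t: convexity general information functional} on the cone $\mathcal{S}^n\cup\{0\}$ with $\widetilde{+}_p$ and the $p$-scalar multiplication, taking $\F=\vol$ with $m=n/p$ and $\G_{rB_n}=r^p\widetilde{\S}_p$, using the dual $L_p$ Brunn-Minkowski inequality \eqref{e: dual BM-Lp} for hypothesis (i) and the equality characterization in Theorem~\ref{thm: dual BM-Lp}. Your explicit first-variation computation and your direct cancellation via $\rho^p$ in the equality case just spell out details that the paper leaves to the analogy with Theorem~\ref{t: cociente Lp surface area}.
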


The proof is entirely analogous to that of Theorem~\ref{t: cociente Lp surface area}.
More precisely, we consider the abstract convex cone with underlying set $\mathcal{S}^n\cup\{0\}$, endowed with the $p$-radial addition $\widetilde{+}_p$ and the $p$-scalar multiplication defined by \eqref{eq: p-multiplication scalar}. Further, we fix $E=rB_n$ for some $r>0$.
Moreover, note that we have the corresponding ``dual analogues'' of the ingredients used in the proof of Theorem~\ref{t: cociente Lp surface area}, namely, the homogeneity (cf.~\eqref{eq: hom vol Lp}), the positivity of the vo\-lu\-me on $\mathcal{S}^n$, the first variation identities \eqref{e: def Lp dual mixed vol} and \eqref{e: integral expression first dual Lp mixed volume} (which also imply that $\widetilde{\S}_p(L)$ is positive and finite for every $L\in\mathcal{S}^n$), and the dual $L_p$ Brunn-Minkowski inequality \eqref{e: dual BM-Lp}, together with its equa\-lity characterization in Theorem~\ref{thm: dual BM-Lp}. 
Thus, all the hypotheses of Theorem~\ref{t: convexity general information functional} are satisfied by the volume, the $L_p$ dual surface area, and the above cone. Therefore, using Theorem~\ref{t: convexity general information functional}, we get \eqref{eq: Lp dual vol surf ratio}, together with its equality case.

\begin{remark}
The role of $E=rB_n$ and $E=rI_n$, respectively, along
Theo\-rems~\ref{t: cociente quermass}, \ref{t: cociente dual quermass}, \ref{t: cociente mixed discriminants}, \ref{t: cociente Lp surface area}, and \ref{t: cociente Lp dual volume surface}, could be played by any other element $E\neq\boldsymbol{0}$ of the corresponding abstract convex cone $(\K, \oplus, \cdot)$, in each case. So, we would obtain the analogous linear Brunn-Minkowski inequality for the suitable information functional. 

However, for the latter inequality, we would not directly get the equa\-lity case whenever there is no characterization of equality in the correspon\-ding Brunn-Minkowski inequality for the given functional $\F$. This is, for ins\-tance, the case of the analogue of Theorem~\ref{t: BM quermass} for relative quermassintegrals, since in that full generality, the equality is, to the best of our knowledge, not characteri\-zed yet.
\end{remark}

\end{document}